\documentclass[11pt]{amsart}

\usepackage{amsmath}
\usepackage{amscd}
\usepackage{amsthm}
\usepackage{graphicx}
\usepackage{amssymb}
\usepackage{epstopdf}
%\DeclareGraphicsRule{.tif}{png}{.png}{`convert #1 `dirname #1`/`basename #1 .tif`.png}

%\usepackage{dynkin-diagrams}
%\usetikzlibrary{backgrounds}
  
%\SelectTips{cm}{12}
%\objectmargin={1pt} 
 
\theoremstyle{plain}
\newtheorem{Thm}{Theorem}[section]
\newtheorem{Conj}[Thm]{Conjecture}
\newtheorem{Prop}[Thm]{Proposition}
\newtheorem{Cor}[Thm]{Corollary}
\newtheorem{Lem}[Thm]{Lemma}

\theoremstyle{definition}

\newtheorem{Rem}[Thm]{Remark}
 
\numberwithin{equation}{section}

\title{Non-commutative deformations of perverse coherent sheaves and rational curves}
\author{Yujiro Kawamata}
%\date{}                                           
% Activate to display a given date or no date
%
\usepackage{fancyhdr}
\pagestyle{fancy}
\lhead{}
\rhead{}

\begin{document}
\maketitle

\tableofcontents

%\dynkin{D}{4}

%\xygraph{
    %\bullet ([]!{+(0,-.3)} {\alpha_1}) - [r]
    %\bullet ([]!{+(0,-.3)} {\alpha_2}) - [r] \cdots - [r]
    %\bullet ([]!{+(0,-.3)} {\alpha_{n - 1}}) - [r]
    %\bullet ([]!{+(0,-.3)} {\alpha_n})}

\begin{abstract}
We consider non-commutative deformations of sheaves on algebraic varieties.
We develop some tools to determine parameter algebras of versal non-commutative deformations for
partial simple collections and the structure sheaves of smooth rational curves.
We apply them to universal flopping contractions of length $2$ and higher. 
We confirm Donovan-Wemyss conjecture in the case of deformations of Laufer's flops.
\end{abstract}

14A22, 14F05, 14J30, 16S38.

%%%%%%%%%%%%%%%%%%%%%%
%%%%%%%%%%%%%%%%%%%%%%
%%%%%%%%%%%%%%%%%%%%%%
\section{Introduction}

The purpose of this paper to develop tools for calculating 
versal non-commutative (NC) deformations.
It is a continuation of our papers \cite{NC multi}, \cite{NC perv} and \cite{NC formal}.
We develop two methods; one to determine the versal NC deformation as well as the NC deformation algebra 
for a partial simple collection of perverse coherent sheaves, and another to determine the degree $2$ parts of 
defining equations of the NC deformation algebra for a smooth rational curve on a smooth variety.  
Then we apply these methods to examples of flopping contractions of rational curves in the case of length $2$ and higher.

\vskip 1pc 

We assume that the base field $k$ is an algebraically closed field of characteristic $0$.
The first method is a generalization of Theorems 6.1 and 6.2 of \cite{NC perv}.
We consider a projective morphism $f: Y \to X$ such that $X = \text{Spec}(R)$ 
for a complete local Noetherian $k$-algebra $R$ whose residue field is $k$.
We assume that there is a locally free coherent sheaf $P$ on $Y$ which is a tilting generator of $D^b(\text{coh}(Y))$
as in \cite{NC perv}.
We define a category of perverse coherent sheaves $\text{Perv}(Y/X)$ to be the one corresponding to 
the category of modules $(\text{mod-}A)$ under Bondal-Rickard derived equivalence $D^b(\text{coh}(Y)) \cong D^b(\text{mod-}A)$.
where $A = f_*\mathcal{E}nd(P)$ is a coherent sheaf of associative $\mathcal{O}_X$-algebras.
Let $\{s_j\}_{j=1}^m$ be the set of all simple objects in $\text{Perv}(Y/X)$, and take arbitrary
non-empty subset $J \subset \{1,\dots,m\}$.
Then we determine explicitly the versal multi-pointed NC deformation of the simple collection $\{s_j\}_{j \in J}$ as well as
the NC deformation algebra, the parameter associative algebra of the versal deformation
(Theorem~\ref{partial}).
(\cite{NC perv}~Theorem 6.1 treated the case when $J = \{1,\dots,m\}$, and Theorem 6.2 when $f$ has at most $1$-dimensional fibers 
and $J$ is the complement of an element which corresponds to the structure sheaf). 
This is applied in \S 4 for determining NC deformations of a non-reduced fiber of a flopping contraction.

\vskip 1pc 

The second method concerns formal NC deformations of a smooth rational curve $C$ on a smooth variety $X$.
In this case, the NC deformation algebra is a quotient ring of a NC power series ring 
$k\langle \langle \text{Ext}^1(\mathcal{O}_C,\mathcal{O}_C)^* \rangle \rangle$ divided by an ideal $I$ coming from an obstruction space
$\text{Ext}^2(\mathcal{O}_C,\mathcal{O}_C)$ (cf. \cite{NC formal}).
An example by Donovan-Wemyss (\cite{Donovan-Wemyss}~Example 1.3) 
shows that a certain flopping curve on a smooth 3-fold has a deformation algebra 
$k\langle \langle a,b \rangle \rangle/(ab+ba, a^2-b^3)$.
The composition map
\[
\text{Ext}^1(\mathcal{O}_C,\mathcal{O}_C) \otimes \text{Ext}^1(\mathcal{O}_C,\mathcal{O}_C) 
\to \text{Ext}^2(\mathcal{O}_C,\mathcal{O}_C)
\]
determines the second order terms of the generators of $I$.
We calculate the behavior of this map in terms of the positivity and the negativity of the normal bundle $N_{C/X}$
(Theorem~\ref{symmetric} and Proposition~\ref{skew-symmetric}).
The reason why anti-symmetric relations like $ab+ba=0$ appear in the NC deformation algebra 
is revealed to be the mixture of the positivity and the negativity of the normal bundle.

\vskip 1pc 

We apply these two results to investigate NC deformations of fibers for some flopping contractions.
We consider projective birational morphisms $f: Y \to X$ from smooth varieties with at most $1$-dimensional fibers  
and such that the canonical divisors $K_Y$ are relatively trivial.

The first case is a universal flopping contraction of length $2$ constructed by Curto-Morrison (\cite{Curto-Morrison}).
We have $\dim X = 6$, and the scheme theoretic central fiber $f^{-1}(0)$ has multiplicity $2$ in this case. 
We determine NC deformation rings of the reduced central fiber and that of the scheme theoretic non-reduced fiber
(Theorems~\ref{univ def alg length 2} and \ref{univ def alg length 2-2}).
They correspond to the two irreducible components of the singular locus $\text{Sing}(X)$ of the base space $X$.
When we consider only commutative deformations, then there are no obstructions in both cases.
But their NC deformations look very different; one has more NC deformations over an irreducible component of $\text{Sing}(X)$ 
which is again singular, 
and the other has only commutative deformations over another irreducible component of $\text{Sing}(X)$ which is non-singular.  

\vskip 1pc 

Next we consider deformations of Laufer's flopping contraction. 
We consider a family of flopping contractions of $3$-dimensional varieties over $2n$-dimensional affine space 
for a positive integer $n$.
This family of flops of length 2 was also considered independently by Van Garderen \cite{vGarderen} in
the context of Donaldson-Thomas invariants.
We calculate the NC deformation algebra (Theorem~\ref{Laufer def}) using Theorem~\ref{univ def alg length 2}.
We prove that the base affine space has a stratification such that 
there are only $2n+1$ isomorphism types in this deformation family (Theorem~\ref{stratification}), and then prove
that the isomorphism types of the NC deformation algebras correspond bijectively to those of flopping contractions
(Proposition~\ref{non-isomorphism1} and Theorem~\ref{non-isomorphism2}).
This is an affirmative answer to a conjecture of Donovan-Wemyss (\cite{Donovan-Wemyss}~Conjecture 1.4) in this case.

\vskip 1pc 

Finally we determine NC deformation algebras for higher length universal flopping contractions (Theorem~\ref{higher length}) 
using a classification result of Karmazyn (\cite{Karmazyn}).
There has been a problem to understand all flopping rational curves on smooth $3$-folds since 1980's.
It looks a simple problem, but is actually quite complicated probably because of its non-commutative nature, 
which is indicated by the difference of the proofs of Proposition~\ref{non-isomorphism1} and Theorem~\ref{non-isomorphism2}.

\vskip 1pc 

The author would like to thank the referee for the careful reading. 
This work was partly supported by JSPS Grant-in-Aid 16H02141.

%%%%%%%%%%%%%%%%%%%%%%
%%%%%%%%%%%%%%%%%%%%%%
%%%%%%%%%%%%%%%%%%%%%%
\section{Versal deformations of partial simple collections}

Let $k$ be an algebraically closed field of characteristic $0$, and
let $f: Y \to X$ be a projective morphism of Noetherian $k$-schemes such that $X = \text{Spec}(R)$ for a complete local algebra $R$
whose residue field is $k$.
A locally free coherent sheaf $P$ on $Y$ is called a {\em tilting generator} if the following conditions are satisfied: 
(1) all higher direct images of $\mathcal{E}nd(P)$ for $f$ vanishes, (2) $P$ generates the 
derived category of quasi-coherent sheaves $D(\text{Qcoh}(Y))$. 
Then the derived Morita equivalence theorem of Bondal and Rickard (\cite{Bondal}, \cite{Rickard}) 
tells us that there is an equivalence of triangulated categories
\[
\Phi: D^b(\text{coh}(Y)) \cong D^b(\text{mod-}A)
\]
given by $\Phi(\bullet) = R\text{Hom}(P,\bullet)$, 
where $A = f_*\mathcal{E}nd(P)$ is a coherent sheaf of associative 
$\mathcal{O}_X$-algebras.
The category of {\em perverse coherent sheaves} $\text{Perv}(Y/X)$ is defined to be the abelian subcategory of 
$D^b(\text{coh}(Y))$ corresponding to the category of finitely generated right $A$-modules $(\text{mod-}A)$ under this equivalence 
(\cite{NC perv}).
We note that $P$ becomes a projective object in $\text{Perv}(Y/X)$, because $\Phi(P) \cong A$.
For example, the category of perverse coherent sheaves ${}^p\text{Perv}(Y/X)$ by Bridgeland \cite{Bridgeland} and Van den Bergh \cite{VdBergh}
is a special case where $f$ has at most $1$-dimensional fibers.

The following is a generalization of \cite{Donovan-Wemyss}~Definition 2.9, Definition 3.8, Lemma 3.9, 
and \cite{NC perv}~Theorems 6.1 and 6.2.
(For a flopping contraction of a smooth $3$-fold, the contraction algebra and the NC deformation algebra are defined 
in Definitions 2.9 and 3.8 of \cite{Donovan-Wemyss} as the factor algebra and the endomorphism algebra as in the theorem below, 
and they are shown to coincide in Lemma 3.9 of [6].  
\cite{NC perv}~Theorem 6.1 treated the case when $J = \{1,\dots,m\}$, and Theorem 6.2 when $f$ has at most $1$-dimensional fibers  
and $J$ is the complement of an element which corresponds to the structure sheaf). 

\begin{Thm}\label{partial}
Let $\{s_j\}_{j=1}^m$ be the set of all simple objects in $\text{Perv}(Y/X)$ above the closed point $x_0 \in X$, and
let $P = \bigoplus_{i=1}^m P_i$ be the direct sum of all 
indecomposable projective objects in the category of perverse coherent sheaves $\text{Perv}(Y/X)$ 
such that $\dim \text{Hom}(P_i,s_j) = \delta_{ij}$.
Let $J \subset \{1,\dots,m\}$ be any non-empty subset, and let $J^c = \{1,\dots,m\} \setminus J$ be the complement.
Denote $P_J = \bigoplus_{i \in J} P_i$ and $P_{J^c} = \bigoplus_{i \in J^c} P_i$.
Let $A = \text{End}(P)$ be an associative algebra of endomorphisms, 
and let $I \subset A$ be the two-sided ideal generated by $g \in A$ which factorizes in the form
$P \to P_{J^c} \to P$ as $\mathcal{O}_Y$-homomorphisms.

(1) Let $Q$ be defined by the following distinguished triangle in $D^b(\text{coh}(Y))$:
\[
Q[-1] \to \text{Hom}(P_{J^c},P) \otimes_{\text{End}(P_{J^c})} P_{J^c} \to P \to Q
\]
where the tensor product is defined in $\text{Perv}(Y/X)$ (see the proof) and the morphism is a natural one.
Then $Q \in \text{Perv}(Y/X)$.

(2) $Q$ is a versal NC deformation of a simple collection $\bigoplus_{j \in J} s_j$.

(3) The parameter algebra $A_{\text{def}} := \text{End}(Q)$ of the versal NC deformation is given by $A/I$.
\end{Thm}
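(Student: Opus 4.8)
The plan is to transport everything to the module category via the Bondal--Rickard equivalence $\Phi = R\text{Hom}(P,-)$ and then to recognize the defining triangle as a recollement sequence. Write $e_i \in A$ for the idempotent with $\Phi(P_i) \cong e_i A$, set $e = \sum_{i \in J^c} e_i$, and note $\Phi(P) \cong A$, $\Phi(P_{J^c}) \cong eA$, $\text{Hom}(P_{J^c},P) \cong \text{Hom}_A(eA,A) \cong Ae$, and $\text{End}(P_{J^c}) \cong eAe$. Under these identifications the middle term of the defining triangle becomes $Ae \otimes_{eAe} eA$, the displayed morphism becomes the multiplication map $ae \otimes eb \mapsto aeb$, and its image is the two-sided ideal $AeA$. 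A short check, tracking a morphism $P \to P_{J^c} \to P$ through $\Phi$ as $A \to eA \to A$, shows that $AeA$ is exactly the ideal $I$ of the statement, so the cokernel of the multiplication map is $\bar A := A/I$.

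For part (1) I would show that $\Phi(Q)$ is concentrated in cohomological degree $0$ and equals $\bar A$. Since the cokernel is $\bar A$ this already gives $H^0(\Phi(Q)) = \bar A$, so the only thing to rule out is $H^{-1}(\Phi(Q)) = \ker(Ae \otimes_{eAe} eA \to A)$, equivalently that the multiplication map is injective, and that the tensor product carries no higher Tor, so that the object defined in $\text{Perv}(Y/X)$ agrees with the derived one. Here the key input is that $P_{J^c}$ is projective in $\text{Perv}(Y/X)$, so $eA$ is a projective right $A$-module; combined with the $t$-exactness of the perverse heart this should force $Ae \otimes_{eAe} eA \xrightarrow{\sim} AeA \hookrightarrow A$, i.e. $(A,e)$ behaves like a stratifying pair. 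Thus $\Phi(Q) \cong \bar A$ lies in $(\text{mod-}A)$ and $Q \in \text{Perv}(Y/X)$.

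Part (3) is then immediate: $\text{End}(Q) \cong \text{End}_A(\bar A)$, and since $\bar A$ is a quotient ring of $A$, any $A$-linear endomorphism of $\bar A$ is left multiplication by an element of $\bar A$, giving $\text{End}(Q) \cong \bar A = A/I$. For part (2) I would first verify that $Q \cong \bar A$ is genuinely a deformation of $\bigoplus_{j \in J} s_j$ over $A_{\text{def}} = \bar A$: it is free, hence flat, as a right $\bar A$-module, and its reduction $\bar A \otimes_{\bar A} (\bar A/\mathrm{rad}\,\bar A)$ recovers $\bigoplus_{j \in J} s_j$. The tangent comparison is clean: for $j,j' \in J$ every length-two module with composition factors $s_j, s_{j'}$ is annihilated by $e$ (its $i$-th Peirce component vanishes for $i \in J^c$), so $\text{Ext}^1_A(s_j,s_{j'}) = \text{Ext}^1_{\bar A}(S_j,S_{j'})$, and the tangent space of the deformation functor matches $\mathrm{rad}\,\bar A/\mathrm{rad}^2\bar A$.

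The remaining and principal task is versality, and this is where I expect the real difficulty to lie. The strategy is to reduce to the already-established full-collection case, namely \cite{NC perv}~Theorem 6.1, which identifies the versal deformation of the complete collection $\bigoplus_{j=1}^m s_j$ with $(P,A)$. One shows that deforming only the sub-collection $\{s_j\}_{j \in J}$ amounts to deforming the full collection while keeping the $J^c$-directions undeformed, and that the maximal such quotient of the pro-representing object $A$ is exactly $A/AeA = \bar A$; concretely, given any deformation $V$ over an Artinian $|J|$-pointed base $S$, projectivity of $P_{J^c}$ should let one build a classifying homomorphism $\bar A \to S$ realizing $V$ as a base change of $Q$, and formal smoothness would then yield versality. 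The delicate point, and the main obstacle, is to show that passing to the quotient $\bar A$ loses no deformations and introduces no spurious relations, i.e. that the obstruction map $\text{Ext}^1 \otimes \text{Ext}^1 \to \text{Ext}^2$ for the sub-collection is computed by the relations of $\bar A$ rather than those of $A$; this requires comparing the $A_\infty$/Massey-product structures for $A$ and $\bar A$ and is the step I would expect to absorb most of the work.
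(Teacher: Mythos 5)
Your overall skeleton coincides with the paper's: you identify the defining triangle under $\Phi$ with $Ae \otimes_{eAe} eA \to A \to A/AeA$ and check $AeA = I$, and your argument for (3) --- that once $\Phi(Q) \cong A/I$ is known, $\text{End}_A(A/I) \cong A/I$ by left multiplication because $I$ is two-sided --- is a legitimate shortcut compared with the paper's route (which constructs a classifying map $h_*\colon A \to A_{\text{def}}$ from versality of $P$, proves surjectivity on cotangent spaces, and identifies $\ker h_*$ with $I$ by a diagram chase). However, there are two genuine gaps. First, in (1) everything rests on the injectivity of the multiplication map $Ae \otimes_{eAe} eA \to A$, and the reason you offer --- that $eA$ is a projective right $A$-module --- is vacuous: $eA$ is a direct summand of $A$, hence projective, for \emph{every} idempotent in \emph{every} ring, yet the map $Ae \otimes_{eAe} eA \to AeA$ fails to be injective for general idempotents (its injectivity is exactly the nontrivial ``stratifying ideal'' condition you name). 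Saying the heart's exactness ``should force'' it is not an argument. The paper supplies one: it realizes $\text{Hom}(P_{J^c},P) \otimes_k \text{Hom}(P,P_{J^c})$ as a direct summand of $\text{Hom}(P,P) \otimes_k \text{Hom}(P,P)$ compatibly with the $B$- and $A$-actions and uses $\text{Hom}(P,P) \otimes_A \text{Hom}(P,P) \cong \text{Hom}(P,P)$.

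Second, and more seriously, versality in (2) is precisely the step you leave open, and the route you propose (comparing the $A_\infty$/Massey-product structures of $A$ and $\bar A$) is not what is needed and would be much harder. The paper's argument is short because it invokes the versality criterion of \cite{NC multi} and \cite{NC perv}~Theorem 6.1: an object that is an inverse limit of iterated extensions of the $s_j$ for $j \in J$ is a versal NC deformation of $\bigoplus_{j\in J}s_j$ as soon as $\text{Hom}(Q,s_j) = k$ and $\text{Hom}(Q,s_j[1]) = 0$ for all $j \in J$ (non-triviality of each successive extension, and maximality). All of this drops out of the defining triangle: $R\text{Hom}(P_{J^c},Q) = 0$ shows the composition factors of $Q/\mathfrak m^n Q$ avoid $J^c$; $\text{Hom}(\text{Hom}(P_{J^c},P)\otimes_B P_{J^c}, s_j) = 0$ for $j \in J$ together with $\text{Hom}(P,s_j) = k$ and $\text{Hom}(P,s_j[1]) = 0$ (projectivity of $P$) then gives the two conditions. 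No comparison of obstruction theories is required. As written, your proposal establishes (1) and (2) only conditionally, so it does not yet constitute a proof.
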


\begin{proof}
(1) We denote $B = \text{End}(P_{J^c})$, and let 
\[
F_1 \to F_0 \to \text{Hom}(P_{J^c},P) \to 0
\]
be a resolution by free right $B$-modules.
Then we define a tensor product $\text{Hom}(P_{J^c},P) \otimes_B P_{J^c}$ in $\text{Perv}(Y/X)$
by an exact sequence in $\text{Perv}(Y/X)$:
\[
F_1 \otimes_B P_{J^c} \to F_0 \otimes_B P_{J^c} \to \text{Hom}(P_{J^c},P) \otimes_B P_{J^c} \to 0.
\]

In order to prove that $Q \in \text{Perv}(Y/X)$, it is sufficient to show that
the morphism $\text{Hom}(P_{J^c},P) \otimes_B  P_{J^c} \to P$ is injective in $\text{Perv}(Y/X)$.
It is in turn sufficient to prove that the homomorphism which is obtained by applying $\text{Hom}(P,\bullet)$:
\[
\text{Hom}(P_{J^c},P) \otimes_B \text{Hom}(P,P_{J^c}) \to \text{Hom}(P,P) 
\]
is injective, because $P$ is a projective generator of $\text{Perv}(Y/X)$.
We have an injective homomorphism of a direct summand
\[
\text{Hom}(P_{J^c},P) \otimes_k \text{Hom}(P,P_{J^c}) \to \text{Hom}(P,P) \otimes_k \text{Hom}(P,P).
\]
The right (resp. left) action of $A$ on $\text{Hom}(P,P)$ induces the right (resp. left) action of $B$ on a direct summand 
$\text{Hom}(P_{J^c},P)$ (resp. $\text{Hom}(P,P_{J^c})$).
Therefore the homomorphism
\[
\text{Hom}(P_{J^c},P) \otimes_B \text{Hom}(P,P_{J^c}) \to \text{Hom}(P,P) \otimes_A \text{Hom}(P,P)
\]
is injective, hence our claim, because $\text{Hom}(P,P) \otimes_A \text{Hom}(P,P) \cong \text{Hom}(P,P)$.

\vskip 1pc

(2) Since $\text{Hom}(P_{J^c}, \bullet)$ is an exact functor on $\text{Perv}(Y/X)$, we have an exact sequence
\[
F_1 \to F_0 \to \text{Hom}(P_{J^c}, \text{Hom}(P_{J^c},P) \otimes_B P_{J^c}) \to 0.
\]
Hence 
\[
\text{Hom}(P_{J^c},P) \cong \text{Hom}(P_{J^c}, \text{Hom}(P_{J^c},P) \otimes_B P_{J^c}).
\]
Then we have $R\text{Hom}(P_{J^c},Q) = 0$.

We will prove that $Q$ is an inverse limit of iterated extensions of the $s_j$ for $j \in J$
by a similar argument to the proof of \cite{NC perv} Theorem 6.2.
It is sufficient to prove that the quotient $Q/\mathfrak m^n Q$ for any $n$ is an iterated extension of 
the $s_j$ for only $j \in J$, where $\mathfrak m$ is the maximal ideal of $R$, because we have
$Q = \varprojlim Q/\mathfrak m^n Q$.
There is a filtration of $Q/\mathfrak m^n Q$ whose quotients are isomorphic to some $s_j$.
We only need to prove that there appear no $s_j$ for $j \in J^c$.
For this purpose, we use $\text{Hom}(P_{J^c}, s_j) = k$ for $j \in J^c$ and $\text{Hom}(P_{J^c}, s_j[1]) = 0$ for all $j$
together with $\text{Hom}(P_{J^c},Q) = 0$.
If there is a quotient which is isomorphic to some $s_j$ with $j \in J^c$, then there is a morphism from $P_{J^c}$ 
to this quotient, which extends successively to $Q/\mathfrak m^n Q$ for all $n$, and we obtain a contradiction
with $\text{Hom}(P_{J^c},Q) = 0$.  

Now in order to prove the versality of $Q$, it is sufficient to prove that $\text{Hom}(Q,s_j) = k$ and 
$\text{Hom}(Q, s_j[1]) = 0$ for $j \in J$ by the argument of \cite{NC perv} Theorem 6.1.
(The first assertion implies that the successive extensions of the $s_j$ for $j \in J$ towards $Q$ are all non-trivial, 
while the second implies that the final extension $Q$ is maximal.)
Any non-zero homomorphism $\text{Hom}(P_{J^c},P) \otimes_B P_{J^c} \to s_j$ is surjective in $\text{Perv}(Y/X)$ 
since $s_j$ is simple.
Then the composed morphism $F_0 \otimes P_{J^c} \to s_j$ is also surjective.
Therefore $\text{Hom}(\text{Hom}(P_{J^c},P) \otimes_B P_{J^c}, s_j) = 0$ for $j \in J$. 
Then it follows that $\text{Hom}(Q,s_j) \cong \text{Hom}(P,s_j) \cong k$ and 
$\text{Hom}(Q, s_j[1]) = 0$ for $j \in J$.

\vskip 1pc

(3) We know that the parameter algebra of the versal deformation is given as $A_{\text{def}} := \text{End}(Q)$ by \cite{NC multi}.
$Q \oplus \bigoplus_{j \in J^c}s_j$ is an NC deformation of a simple collection $\bigoplus_{j=1}^m s_j$.
Since $P$ is a versal NC deformation of the simple collection $\bigoplus_{j=1}^m s_j$, there is 
a homomorphism $h: P \to Q \oplus \bigoplus_{j \in J^c}s_j$ of NC deformations.
We have an associated ring homomorphism of the parameter algebras $h_*: A \to A_{\text{def}}$.

Let $M,M_{\text{def}}$ be respectively the two-sided ideals of $A,A_{\text{def}}$ consisting of elements which induce $0$ maps
on the central fiber $\bigoplus_{j=1}^m s_j$.
We have $A/M \cong A_{\text{def}}/M_{\text{def}} \cong k^m$.
Then the ring homomorphism $h_*$ induces a homomorphism between the Zariski cotangent spaces
$M/M^2 \to M_{\text{def}}/M^2_{\text{def}}$, which is the same as the projection
\[
\bigoplus_{i,j=1}^m \text{Ext}^1(s_i,s_j)^* \to \bigoplus_{i,j \in J} \text{Ext}^1(s_i,s_j)^* 
\]
(cf. \cite{NC formal}).
Since this is surjective, so is the ring homomorphism $h_*$, because 
$A = \varprojlim A/M^n$ and $A_{\text{def}} = \varprojlim A_{\text{def}}/M^n_{\text{def}}$.
It follows that $h$ is also surjective. 

Let $g \in \text{End}(P)$ be an endomorphism which factors through $\bigoplus_{i \in J^c} P_i$:
\[
\begin{CD}
P @>>> \bigoplus_{i \in J^c} P_i @>>> P \\
@VVV @. @VVV \\
Q @>{h_*(g)}>> @. Q
\end{CD}
\]
Since $\text{Hom}(P_i,s_j) = 0$ for $i \in J^c$ and $j \in J$, we have
$\text{Hom}(\bigoplus_{j \in J^c} P_i,Q) = 0$.
Hence $h_*g = 0$.

Conversely, assume that $g \in \text{End}(P)$ satisfies $h_*g = 0$.
We have a commutative diagram of exact sequences in $\text{Perv}(Y/X)$:
\[
\begin{CD}
0 @>>> \text{Hom}(P_{J^c},P) \otimes_B P_{J^c} @>>> P @>>> Q @>>> 0 \\
@. @VVV @VgVV @V0VV \\
0 @>>> \text{Hom}(P_{J^c},P) \otimes_B P_{J^c} @>>> P @>>> Q @>>> 0. 
\end{CD}
\]
By the diagram chasing, there is a homomorphism $\bar g: P \to \text{Hom}(P_{J^c},P) \otimes_B P_{J^c}$
which lifts $g$.
Since $F_0 \otimes_B P_{J^c} \to \text{Hom}(P_{J^c},P) \otimes_B P_{J^c}$ is a surjection in $\text{Perv}(Y/X)$ and $P$ is a
projective object, $\bar g$ is lifted to $\bar g_0: P \to F_0 \otimes_B P_{J^c}$.
Therefore $g$ belongs to the ideal $I$.
\end{proof}

%%%%%%%%%%%%%%%%%%%%%%
%%%%%%%%%%%%%%%%%%%%%%
%%%%%%%%%%%%%%%%%%%%%%
\section{Second infinitesimal neighborhood of a smooth rational curve}

Let $F$ be a coherent sheaf on an algebraic variety $X$ with proper support.
We can describe the NC deformation ring $A_{\text{def}}$ of $F$, the parameter algebra of the versal NC deformation, 
by using $A^{\infty}$ algebra multiplications (\cite{NC formal}).
The tangent space of the deformation ring is given by $\text{Ext}^1(F,F)$, and the 
relations by $\text{Ext}^2(F,F)$ in the following way.

There are $A^{\infty}$-multiplications $m_n: (\text{Ext}^1(F,F))^{\otimes n} \to \text{Ext}^2(F,F)$ for $n \ge 2$.
$m_2$ coincides with the usual composition.
Let 
\[
m = \sum m_n: \hat T(\text{Ext}^1(F,F)) \to \text{Ext}^2(F,F)
\]
be their formal sum, where $\hat T(\text{Ext}^1(F,F)) = \prod_{n=0}^{\infty} (\text{Ext}^1(F,F))^{\otimes n}$ is the 
completed tensor algebra and we put 
$m_0=m_1=0$.
Then the NC deformation ring is given by
\[
A_{\text{def}} = \hat T(\text{Ext}^1(F,F)^*)/m^*(\text{Ext}^2(F,F)^*)
\]
where 
\[
m^*: \text{Ext}^2(F,F)^* \to \hat T(\text{Ext}^1(F,F)^*)
\]
is the dual map.
In particular, it is a quotient algebra of a non-commutative power series ring of dimension $\dim \text{Ext}^1(F,F)$ 
divided by a two-sided ideal generated by $\dim \text{Ext}^2(F,F)$ elements. 

\vskip 1pc

Now we apply the above description to the case where $C \subset X$ is a smooth rational curve on a smooth variety.
We consider NC deformations of a sheaf $\mathcal{O}_C$.

The quadratic terms of the relations are determined by the composition 
\[
m_2: \text{Ext}^1(\mathcal{O}_C,\mathcal{O}_C) \otimes \text{Ext}^1(\mathcal{O}_C,\mathcal{O}_C) 
\to \text{Ext}^2(\mathcal{O}_C,\mathcal{O}_C).
\]
If $m_2$ is skew-symmetric (resp. symmetric), then the quadratic term is a commutative (resp. anti-commutative) relation 
by the following reason.

The quadratic terms of the relations are the image of the homomorphism
\[
m_2^*: \text{Ext}^2(\mathcal{O}_C,\mathcal{O}_C)^* \to 
\text{Ext}^1(\mathcal{O}_C,\mathcal{O}_C)^* \otimes \text{Ext}^1(\mathcal{O}_C,\mathcal{O}_C)^*
\]
which is the dual of $m_2$.
If $m_2(a_i,a_j) = \sum c_{ijk}b_k$, then we have $m_2^*b_k^* = \sum c_{ijk}a_i^* \otimes a_j^*$.
Therefore, if $m_2$ is symmetric (resp. skew-symmetric), i.e., $m_2(a,b) = m_2(b,a)$
(resp. $m_2(a,b) + m_2(b,a) = 0$), 
then the image of $m_2^*$ is generated by elements of the form $a^*b^*+b^*a^*$ (resp. $a^*b^*-b^*a^*$).

\vskip 1pc

If the normal bundle of $C$ has a direct sum decomposition with mixed signs, then the composition is symmetric:

\begin{Thm}\label{symmetric}
Let $X$ be a smooth $3$-dimensional algebraic variety and let $C$ be a subvariety which is isomorphic to $\mathbf{P}^1$ .
Assume that the normal bundle $N_{C/X} \cong \mathcal{O}_C(1) \oplus \mathcal{O}_C(-a)$ for some $a > 0$.
Then a natural bilinear form
\begin{equation}\label{bilinear}
\text{Ext}^1(\mathcal{O}_C,\mathcal{O}_C) \otimes \text{Ext}^1(\mathcal{O}_C,\mathcal{O}_C) 
\to \text{Ext}^2(\mathcal{O}_C,\mathcal{O}_C)
\end{equation}
is symmetric.
\end{Thm}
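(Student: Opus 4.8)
The plan is to compute the composition through the local-to-global spectral sequence and to show that, although the naive leading term of $m_2$ is alternating, it in fact vanishes identically, so that the whole of $m_2$ is carried by a secondary term which is manifestly symmetric.

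First I would record the identifications coming from $\mathcal{E}xt^q_{\mathcal{O}_X}(\mathcal{O}_C,\mathcal{O}_C)\cong\wedge^q N$ (valid since $C$ is a smooth local complete intersection of codimension $2$) together with the spectral sequence $E_2^{p,q}=H^p(C,\wedge^q N)\Rightarrow\text{Ext}^{p+q}(\mathcal{O}_C,\mathcal{O}_C)$. Writing $N=L_1\oplus L_2$ with $L_1=\mathcal{O}_C(1)$ and $L_2=\mathcal{O}_C(-a)$, one gets $\text{Ext}^1\cong H^0(N)=H^0(L_1)$ (because $H^1(\mathcal{O}_C)=0$ and $H^0(L_2)=0$), while $\text{Ext}^2$ carries a two-step filtration with graded pieces $H^0(\wedge^2N)=H^0(L_1\otimes L_2)$ and $H^1(N)=H^1(L_2)$. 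The key preliminary observation is that the leading component of $m_2$, namely the exterior product $\wedge^2 H^0(N)\to H^0(\wedge^2N)$, is zero: every class in $\text{Ext}^1$ lies in the single line subbundle $L_1$, and the wedge of two sections of a line bundle vanishes. Hence the image of $m_2$ lands in the deeper filtration step $H^1(N)=H^1(L_2)$, and it is this secondary product that must be analysed.

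Next I would identify this secondary product geometrically. Because the associated-graded exterior term vanishes, $m_2$ is governed by the non-split structure of the second infinitesimal neighborhood of $C$, i.e. by the extension $0\to I^2/I^3\to\mathcal{O}_X/I^3\to\mathcal{O}_X/I^2\to0$, in which $I^2/I^3\cong S^2(I/I^2)=S^2N^*$. I would encode its quadratic transition data as a class $\kappa\in H^1(C,\mathcal{H}om(S^2N,N))=H^1(S^2N^*\otimes N)$ — the second fundamental form of the embedding — whose only relevant component, in view of the vanishings $H^0(L_2)=H^1(L_1)=0$, is $\kappa_{11}\in H^1(\mathcal{H}om(S^2L_1,L_2))=H^1(\mathcal{O}_C(-a-2))$. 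The aim of the computation is then to establish the factorization
\[
m_2(\alpha,\beta)=\kappa\smile(\alpha\odot\beta),
\]
where $\alpha\odot\beta\in H^0(S^2N)=H^0(\mathcal{O}_C(2))$ is the (commutative) product of the two sections, the cup product is followed by the contraction $S^2N^*\otimes S^2N\to\mathcal{O}_C$, and the result lies in $H^1(L_2)\subset\text{Ext}^2$.

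Granting this factorization the symmetry is immediate: multiplication of sections is commutative, so $\alpha\odot\beta=\beta\odot\alpha$, whence $m_2(\alpha,\beta)=m_2(\beta,\alpha)$. The main obstacle is precisely the factorization above. To prove it I would set up the \v{C}ech--Koszul double complex attached to the genuine (non-split) embedding, represent $\alpha,\beta$ by $0$-cochains valued in the sheaf $\mathcal{E}xt^1=N$, and compute their Yoneda product in this dg-model: the product equals the wedge (which vanishes) plus the correction obtained by feeding the two cochains through the quadratic part of the Koszul differential, and one must check that this correction is exactly $\kappa$ cupped with $\alpha\odot\beta$. The delicate points are that the two inputs enter through the symmetric algebra $S^\bullet N^*$ of the conormal sheaf — so that they are fed in symmetrically rather than alternatingly — and that all Koszul and \v{C}ech signs are tracked correctly; verifying that the resulting pairing is symmetric rather than skew is the crux of the argument.
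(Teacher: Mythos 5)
Your reduction of the problem is correct and worth keeping: since $H^0(N_{C/X})=H^0(\mathcal{O}_C(1))$ lies entirely in the line subbundle $L_1$, the wedge-product symbol $\wedge^2H^0(N)\to H^0(\wedge^2N)$ vanishes, so $m_2$ takes values in the deeper filtration step $H^1(N)=H^1(\mathcal{O}_C(-a))$ of $\operatorname{Ext}^2(\mathcal{O}_C,\mathcal{O}_C)$, and the whole question is about that secondary component. This matches the paper, which likewise pivots on the second infinitesimal neighborhood: there every $g\in\operatorname{Ext}^1$ is written as $g=h_1[1]p[1]e$ with $e$ the class of $0\to N^*\to\mathcal{O}_{2C}\to\mathcal{O}_C\to 0$ and $p$ the projection to $\mathcal{O}_C(-1)$, and symmetry is reduced to an isomorphism of two explicit extensions $G\cong G'$ of $\mathcal{O}_C(-1)$ by $\mathcal{O}_C$, proved by a local-coordinate computation on the ribbon $C_2$ together with the vanishing $H^1(\mathcal{O}_C(-1))=0$.

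The gap is that your central claim, the factorization $m_2(\alpha,\beta)=\kappa\smile(\alpha\odot\beta)$ through the symmetric square, is asserted rather than proved, and it is not a lemma you can expect to quote: it is logically equivalent to the statement of the theorem. Indeed, writing $m_2=m_2^{\mathrm{sym}}+m_2^{\mathrm{alt}}$, the vanishing of the wedge symbol only tells you that $m_2^{\mathrm{alt}}$ also lands in $H^1(N)$; there is no general structural reason for the $H^1(\mathcal{E}xt^1)$-valued component of the Yoneda product to kill its alternating part (if there were, Theorem~\ref{symmetric} would be formal and the explicit diagram chase in the paper unnecessary). Your own description of the \v{C}ech--Koszul computation concedes this: the product in that model is of the shape $\alpha^{(0)}\cup\beta^{(1)}$ plus corrections, which is not manifestly symmetric in $(\alpha,\beta)$, and you defer exactly the verification (``verifying that the resulting pairing is symmetric rather than skew is the crux'') that constitutes the theorem. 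To close the gap you would have to actually carry out that cochain computation and exhibit the symmetrization --- at which point you would be doing, in a dg model, essentially the same local analysis the paper performs with the extensions $G$ and $G'$; the paper's route has the advantage that the discrepancy between the two orderings is packaged as a line bundle $L$ on $C_2$ with $L\otimes\mathcal{O}_C\cong\mathcal{O}_C$, trivialized by $H^1(\mathcal{O}_C(-1))=0$. As written, the proposal is an accurate map of where the difficulty lives, not a proof.
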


\begin{proof}
Since $C$ is a locally complete intersection, 
we have $\mathcal{E}xt^p(\mathcal{O}_C,\mathcal{O}_C) \cong \mathcal{O}_C,N_{C/X},\bigwedge^2 N_{C/X}$ for $p=0,1,2$.
We have a spectral sequence
\[
E_2^{p,q} = H^p(\mathcal{E}xt^q(\mathcal{O}_C,\mathcal{O}_C)) \Rightarrow \text{Ext}^{p+q}(\mathcal{O}_C,\mathcal{O}_C).
\]
Since $H^p(\mathcal{O}_C) = 0$ for $p = 1,2$, 
we have $\text{Ext}^1(\mathcal{O}_C,\mathcal{O}_C) \cong H^0(N_{C/X}) \cong H^0(\mathcal{O}_C(1))$.

Let $I_C$ be the ideal sheaf of $C$, i.e., $\mathcal{O}_C = \mathcal{O}_X/I_C$, and let $\mathcal{O}_{2C} = \mathcal{O}_X/I_C^2$.
We have an exact sequence 
\begin{equation}\label{2C}
0 \to N_{C/X}^* \to \mathcal{O}_{2C} \to \mathcal{O}_C \to 0
\end{equation}
and a long exact sequence
\[
\begin{split}
&0 \to \text{Hom}(\mathcal{O}_C,\mathcal{O}_C) \to \text{Hom}(\mathcal{O}_{2C},\mathcal{O}_C) 
\to \text{Hom}(N_{C/X}^*,\mathcal{O}_C) \\
&\to \text{Ext}^1(\mathcal{O}_C,\mathcal{O}_C). 
\end{split}
\]
Since the first two terms are isomorphic to $k$ and the next two terms have the same dimension, 
the connecting homomorphism
$\text{Hom}(N_{C/X}^*,\mathcal{O}_C) \to \text{Ext}^1(\mathcal{O}_C,\mathcal{O}_C)$
is a bijection, which is given by the extension class $e \in \text{Ext}^1(\mathcal{O}_C,N_{C/X}^*)$ of (\ref{2C}) and
a composition map
\[
\text{Ext}^1(\mathcal{O}_C,N_{C/X}^*) \times \text{Hom}(N_{C/X}^*,\mathcal{O}_C) 
\to \text{Ext}^1(\mathcal{O}_C,\mathcal{O}_C)
\]
i.e., $h \in \text{Hom}(N_{C/X}^*,\mathcal{O}_C)$ is mapped to $g = h[1]e \in \text{Ext}^1(\mathcal{O}_C,\mathcal{O}_C)$.

Let $p: N_{C/X}^* \to \mathcal{O}_C(-1)$ be the projection to a direct summand.
Then we can further write $g = h_1[1]p[1]e$ for $h_1 \in \text{Hom}(\mathcal{O}_C(-1),\mathcal{O}_C)$.
The extension $\mathcal{O}_{C_2}$ of $\mathcal{O}_C$ by $\mathcal{O}_C(-1)$ 
corresponding to $p[1]e \in \text{Ext}^1(\mathcal{O}_C,\mathcal{O}_C(-1))$ is obtained by the 
following commutative diagram
\[
\begin{CD}
0 @>>> N_{C/X}^* @>>> \mathcal{O}_{2C} @>>> \mathcal{O}_C @>>> 0 \\
@. @VpVV @VVV @V=VV \\
0 @>>> \mathcal{O}_C(-1) @>>> \mathcal{O}_{C_2} @>>> \mathcal{O}_C @>>> 0.
\end{CD}
\]

Let $g,g' \in \text{Ext}^1(\mathcal{O}_C,\mathcal{O}_C)$.
Then our bilinear form is given by $(g,g') \mapsto g[1]g' \in \text{Hom}(\mathcal{O}_C,\mathcal{O}_C[2]) 
= \text{Ext}^2(\mathcal{O}_C,\mathcal{O}_C)$. 
We write $g = h_1[1]p[1]e$ and $g' = h'_1[1]p[1]e$.
In order to prove that $gg' = g'g$, it is sufficient to prove that 
$h_1[1]p[1]eh'_1 = h'_1[1]p[1]eh_1 \in \text{Hom}(\mathcal{O}_C(-1),\mathcal{O}_C[1])$.

Let $G,G'$ be the extensions of $\mathcal{O}_C(-1)$ by $\mathcal{O}_C$ corresponding to $h_1[1]p[1]eh'_1$ and 
$h'_1[1]p[1]eh_1$.
We will prove that they are isomorphic as extensions.
$G$ is obtained by the following commutative diagram:
\[
\begin{CD}
0 @>>> \mathcal{O}_C(-1) @>>> \mathcal{O}_{C_2} @>>> \mathcal{O}_C @>>> 0 \\
@. @A=AA @AAA @A{h'_1}AA \\
0 @>>> \mathcal{O}_C(-1) @>>> F @>>> \mathcal{O}_C(-1) @>>> 0 \\
@. @V{h_1}VV @VVV @V=VV \\
0 @>>> \mathcal{O}_C @>>> G @>>> \mathcal{O}_C(-1) @>>> 0.
\end{CD}
\]
$G'$ is similarly defined by interchanging $h_1$ and $h'_1$.

We need to prove that $G' \cong G$ as extensions of $\mathcal{O}_C(-1)$ by $\mathcal{O}_C$.
We note that the $\mathcal{O}_X$-submodule $\mathcal{O}_C \subset G$ is characterized by
\[
\Gamma(U,\mathcal{O}_C) = \{s \in \Gamma(U,G) \mid I_Cs = 0\}
\]
for any open subset $U \subset C$, and similarly for $G'$.
Therefore it is sufficient to prove that $G' \cong G$ as $\mathcal{O}_X$-modules.

Let $P,P'$ be the supports of the cokernels of the injective homomorphisms $h_1, h'_1$, respectively.
Let $L$ be any invertible sheaf on $C_2 \subset 2C$ such that 
$L \otimes \mathcal{O}_C = \mathcal{O}_C(P' - P) \cong \mathcal{O}_C$.
We have $L \cong \mathcal{O}_{C_2}$ because $H^1(\mathcal{O}_C(-1)) = 0$.
Indeed we have an exact sequence
\[
H^1(\mathcal{O}_C(-1)) \to H^1(\mathcal{O}_{C_2}^*) \to H^1(\mathcal{O}_C^*) \to 0.
\]

Locally around $P$ and $P'$, we take analytic local coordinates $(x,y,z)$ and $(x',y',z')$ respectively, which satisfy the 
following: 
\[
\begin{split}
&\mathcal{O}_{C,P} = k\{x,y,z\}/(y,z) \cong k\{x\}, \,\,\, \mathcal{O}_{C,P'} = k\{x',y',z'\}/(y',z') \cong k\{x'\}, \\
&\mathcal{O}_{C_2,P} = k\{x,y,z\}/(y^2,z) \cong k\{x,y\}/(y^2), \\
&\mathcal{O}_{C_2,P'} = k\{x',y',z'\}/(y^{\prime 2},z') 
\cong k\{x',y'\}/(y^{\prime 2}), \\
&h_1\mathcal{O}_{C,P}(-1) = x(k\{x,y,z\}/(y,z)) \cong xk\{x\}, \\
&h_1'\mathcal{O}_{C,P'}(-1) = x'(k\{x',y',z'\}/(y',z')) \cong x'k\{x'\}.
\end{split}
\]
Then 
\[
G_{P'} = (x'+p'(y'),y')k\{x',y'\}/(y^{\prime 2}) = (x',y')k\{x',y'\}/(y^{\prime 2})
\]
for some function $p'(y') \in y'k\{x',y'\}$, and 
\[
G_P = (1,x^{-1}y)k\{x,y\}/(x^{-1}y^2)
\]
where $z,z'$ act trivially on these modules.
On the other hand, we have $G'_P = (x,y)k\{x,y\}/(y^2)$ and 
$G'_{P'} = (1,x^{\prime -1}y')k\{x',y'\}/(x^{\prime -1}y^{\prime 2})$.
Therefore we have $G' \cong G \otimes L \cong G$.
\end{proof}

On the other hand, if the normal bundle is positive, then the composition is skew-symmetric:

\begin{Prop}\label{skew-symmetric}
Let $X$ be a smooth algebraic variety and $C \cong \mathbf{P}^1$ a subvariety.
Assume that $H^1(N_{C/X}) = 0$.
Then the bilinear form (\ref{bilinear}) is skew-symmetric.
\end{Prop}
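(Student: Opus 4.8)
The plan is to reduce the global Yoneda product to the exterior product on the normal bundle by means of the local-to-global spectral sequence, exactly as in the proof of Theorem~\ref{symmetric}, and to exploit the hypothesis $H^1(N_{C/X}) = 0$ to show that the obstruction space $\text{Ext}^2(\mathcal{O}_C,\mathcal{O}_C)$ is built entirely from the alternating part $H^0(\bigwedge^2 N_{C/X})$.

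First I would record the sheaf-level structure. Since $C$ is a local complete intersection, $\mathcal{E}xt^q(\mathcal{O}_C,\mathcal{O}_C) \cong \bigwedge^q N_{C/X}$, and under these isomorphisms the sheaf-theoretic Yoneda product
\[
\mathcal{E}xt^1(\mathcal{O}_C,\mathcal{O}_C) \otimes \mathcal{E}xt^1(\mathcal{O}_C,\mathcal{O}_C) \to \mathcal{E}xt^2(\mathcal{O}_C,\mathcal{O}_C)
\]
is identified with the exterior multiplication $N_{C/X} \otimes N_{C/X} \to \bigwedge^2 N_{C/X}$, which is alternating. This is the source of the sign; the whole point is to transport it to global Ext.

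Next I would run the spectral sequence $E_2^{p,q} = H^p(\mathcal{E}xt^q(\mathcal{O}_C,\mathcal{O}_C)) \Rightarrow \text{Ext}^{p+q}(\mathcal{O}_C,\mathcal{O}_C)$. Because $\dim C = 1$ we have $H^p = 0$ for $p \geq 2$, and because $C \cong \mathbf{P}^1$ we have $H^1(\mathcal{O}_C) = 0$; together with the hypothesis $H^1(N_{C/X}) = 0$ this kills $E_2^{1,0}$, $E_2^{2,0}$ and $E_2^{1,1}$. The remaining nonzero terms on the diagonals $p+q = 1,2$ are $E_2^{0,1} = H^0(N_{C/X})$ and $E_2^{0,2} = H^0(\bigwedge^2 N_{C/X})$, and a check of the $d_2$ differentials, whose targets $E_2^{2,0}$ and $E_2^{2,1}$ vanish for dimension reasons, shows these already equal $E_\infty$. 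Hence the edge maps give isomorphisms $\text{Ext}^1(\mathcal{O}_C,\mathcal{O}_C) \cong H^0(N_{C/X})$ and $\text{Ext}^2(\mathcal{O}_C,\mathcal{O}_C) \cong H^0(\bigwedge^2 N_{C/X})$, with trivial filtrations.

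Finally I would invoke multiplicativity of the local-to-global spectral sequence. Since both $\text{Ext}^1$ and $\text{Ext}^2$ are concentrated in the bottom row $E_\infty^{0,\bullet}$, the global product is induced by the product of $E_2^{0,1}$ with itself into $E_2^{0,2}$, namely the cup product
\[
H^0(N_{C/X}) \otimes H^0(N_{C/X}) \to H^0(N_{C/X} \otimes N_{C/X}) \to H^0(\textstyle\bigwedge^2 N_{C/X})
\]
built from the exterior multiplication. Being induced by an alternating sheaf map, this is skew-symmetric, which is the assertion. The main obstacle will be this last step: one must be sure that no correction term from the spectral sequence filtration intervenes, i.e.\ that the global Yoneda product genuinely coincides with the product induced on $E_\infty^{0,\bullet}$. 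This is where the vanishing $H^1(N_{C/X}) = 0$ is essential, since it forces the filtration on $\text{Ext}^2(\mathcal{O}_C,\mathcal{O}_C)$ to be concentrated in the single piece $H^0(\bigwedge^2 N_{C/X})$ and leaves no room for a symmetric contribution coming from $E_2^{1,1} = H^1(N_{C/X})$ — precisely the contribution that produces the symmetric form in Theorem~\ref{symmetric}.
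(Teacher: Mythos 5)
Your proposal is correct and follows essentially the same route as the paper: the paper's proof simply records the identifications $\text{Ext}^1(\mathcal{O}_C,\mathcal{O}_C) \cong H^0(N_{C/X})$ and $\text{Ext}^2(\mathcal{O}_C,\mathcal{O}_C) \cong H^0(\bigwedge^2 N_{C/X})$ and observes that the bilinear form comes from the wedge product, hence is skew-symmetric. Your version merely spells out the local-to-global spectral sequence degeneration and the multiplicativity argument that the paper leaves implicit.
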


\begin{proof}
We have $\text{Ext}^1(\mathcal{O}_C,\mathcal{O}_C) = H^0(N_{C/X})$, 
$\text{Ext}^2(\mathcal{O}_C,\mathcal{O}_C) = H^0(\bigwedge^2 N_{C/X})$, and the 
bilinear form (\ref{bilinear}) comes from the wedge product.
Therefore it is skew-symmetric.
\end{proof}

%%%%%%%%%%%%%%%%%%%
%%%%%%%%%%%%%%%%%%%
%%%%%%%%%%%%%%%%%%%
\section{Example: universal flopping contraction of length $2$}

A projective birational morphism $f: Y \to X$ from a variety with only terminal (or canonical, or more) singularities to a normal variety
is called a {\em flopping contraction} if the exceptional locus
has codimension at least $2$ and the canonical divisor $K_Y$ is numerically trivial along the exceptional curves.
We consider only the case where $Y$ is smooth in this paper.

Let us consider the case where $\dim Y = 3$, the exceptional curve is isomorphic to $\mathbf{P}^1$, and $X$ is a germ of a singularity.  
The analytic type of a generic hyperplane section of $X$ through its singularity is classified by Katz and Morrison \cite{KatzM}
using a result of \cite{Reid} (an easy alternative proof is found in \cite{hyperplane}) in the case where the exceptional locus is irreducible:

\begin{Thm}  
Let $f: Y \to X$ be a flopping contraction of a smooth $3$-fold such that 
the exceptional locus $C$ of $f$ is a smooth rational curve.
Let $P = f(C)$ be the singular point of $X$, let $H$ be a general hyperplane section of $X$ through $P$, 
and let $L = f^{-1}(H)$ be its inverse image.
Let $l$ be the length of the scheme theoretic fiber $f^{-1}(P)$ at the generic point of $C$.
Then the length $l$ takes value in a set $\{1,2,3,4,5,6\}$, and 
the singularity of $H$ together with the partial resolution $f_H: L \to H$ is determined by $l$.
More precisely,
$H$ has a rational double point of type $A_1, D_4, E_6, E_7, E_8$, or $E_8$,
if $l = 1, 2, 3, 4, 5$ or $6$, respectively, and the exceptional divisor of $f_H$ is the 
rational curve which appears in the minimal resolution of $H$ and uniquely determined by the condition that 
it has multiplicity $l$ in the fundamental cycle.
\end{Thm}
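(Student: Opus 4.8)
The plan is to reduce the three-dimensional statement to the combinatorics of Du Val (rational double point) singularities and then to control the flopping contraction on the resolution side. First I would show that $X$ has a compound Du Val ($cDV$) singularity at $P$. Since $f$ is a flopping contraction of a smooth $3$-fold, its exceptional locus $C$ has codimension $\ge 2$ and $K_Y$ is numerically $f$-trivial, so $f$ is a small crepant partial resolution of the Gorenstein canonical singularity $X$. By Reid's characterization, Gorenstein canonical threefold singularities admitting such a resolution are exactly the $cDV$ singularities; consequently a general hyperplane section $H \ni P$ is a Du Val singularity of some type $\Gamma \in \{A_n, D_n, E_6, E_7, E_8\}$, and I may speak of its minimal resolution $\mu: \tilde H \to H$ with exceptional $ADE$ configuration $\{E_i\}$.

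Next I would analyze the induced partial resolution $f_H: L \to H$, where $L = f^{-1}(H) = f^*H$. Away from $P$ the map $f_H$ is an isomorphism, while the set-theoretic fibre over $P$ is the single curve $C$, so $f_H$ extracts exactly one curve. The key geometric input is that, for general $H$, the surface $L$ has at worst Du Val singularities (again part of Reid's hyperplane-section analysis on the resolution side). Granting this, $\mu$ factors as $\tilde H \to L \to H$, where $\tilde H \to L$ is the minimal resolution of the Du Val singularities of $L$. Because any induced subdiagram of an $ADE$ diagram is again a disjoint union of $ADE$ diagrams (principal submatrices of the positive definite Cartan matrix stay positive definite), the content is that $L$ is obtained from $\tilde H$ by contracting the entire exceptional configuration \emph{except one curve} $E_v$, with $C$ the image of $E_v$. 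This both explains why $L$ is singular for $\Gamma \ne A_1$ and pins $C$ down to a single vertex $v$ of $\Gamma$.

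The third step identifies the length. Writing $Z = \sum_i m_i E_i$ for the fundamental cycle of $\mu$ and using that for a rational double point the fundamental cycle coincides with the maximal ideal cycle (Artin), the scheme-theoretic fibre $\mu^{-1}(P) = \mu^*\mathfrak{m}_P$ equals $Z$. Pushing forward to $L$ and restricting to the generic point $\eta$ of $C$, the length of $f_H^{-1}(P)$ at $\eta$ is exactly the multiplicity $m_v$ of $E_v$ in $Z$; since $f_H^{-1}(P) = f^{-1}(P)\cap L$ computes $l$ at $\eta$, I obtain $l = m_v$. As the multiplicities in the fundamental cycle of any $ADE$ graph are bounded by $6$ (the maximum, attained only in $E_8$), this yields $l \in \{1,\dots,6\}$, and since within each of the relevant diagrams the value $l$ is realized by a unique vertex, the phrase ``the rational curve with multiplicity $l$ in the fundamental cycle'' is well defined.

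Finally, the precise determination of the type requires the realizability constraint coming from the smoothness of $Y$, and this is where I expect the main obstacle to lie: it is not enough that $v$ be some vertex with $m_v = l$, one must show that the only pairs $(\Gamma, v)$ arising from a flopping contraction of a \emph{smooth} threefold are $(A_1)$, $(D_4,\text{centre})$, $(E_6,\text{centre})$, $(E_7,\text{mark }4)$, $(E_8,\text{mark }5)$ and $(E_8,\text{mark }6)$, for $l = 1,\dots,6$ respectively. I would attack this through the deformation and simultaneous-resolution theory of Du Val singularities (Brieskorn--Tyurina): smoothness of $Y$ realizes $H$ inside a family of Du Val singularities with smooth total space, and the length together with this smoothness selects the vertex of maximal admissible multiplicity, forcing $\Gamma$ to be the minimal $ADE$ type carrying a \emph{unique} vertex of multiplicity $l$. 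Matching the marks of the highest root then produces $A_1, D_4, E_6, E_7, E_8, E_8$ and identifies the extracted curve as the stated one, completing the classification.
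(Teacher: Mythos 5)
The paper offers no proof of this statement: it is quoted verbatim from Katz--Morrison \cite{KatzM}, who prove it via the invariant theory of Weyl groups applied to simultaneous partial resolutions, with a short alternative proof in \cite{hyperplane}; so there is no in-paper argument to compare your route against, and I must judge your proposal as a standalone proof. Your first three steps are correct and are the standard reduction: $X$ is $cDV$ by Reid, so $H$ is Du Val; $L = f^*H$ is normal with Du Val singularities and $\tilde H \to L$ contracts all exceptional curves except the one $E_v$ dominating $C$; and $l = m_v$ follows from Artin's identity $\mathfrak m_P\mathcal O_{\tilde H} = \mathcal O_{\tilde H}(-Z)$ together with $\mathcal O_{Y,\eta}/(\mathfrak m_P\mathcal O_{Y,\eta} + (f^*h)) = \mathcal O_{Y,\eta}/\mathfrak m_P\mathcal O_{Y,\eta}$, giving $l \le 6$ since $6$ is the largest mark of any highest root.

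The genuine gap is your fourth step, which is the actual content of the theorem. You correctly observe that one must classify the pairs $(\Gamma, v)$ realizable by a flopping contraction of a \emph{smooth} threefold with irreducible exceptional curve, but you then substitute a mnemonic --- ``the minimal $ADE$ type carrying a unique vertex of multiplicity $l$'' --- for a proof. That rule does output the six correct pairs, but nothing in your argument connects either the minimality of $\Gamma$ or the uniqueness of the vertex to the smoothness of $Y$. Concretely, you never exclude $(\mathrm A_n, \text{end vertex})$ with $n \ge 2$ for $l=1$, nor $(\mathrm D_n, \text{trivalent vertex})$ with $n \ge 5$ for $l=2$, nor $(\mathrm E_7, v)$ with $m_v=3$ for $l=3$, nor $(\mathrm E_8, v)$ with $m_v=4$ for $l=4$; each of these satisfies every constraint you have actually established. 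The Brieskorn--Tyurina framework you gesture at is indeed how Katz--Morrison proceed --- $X$ is a one-parameter deformation of $H$ and $Y$ a simultaneous partial resolution classified by a map to $\mathfrak h/W_0$, where $W_0$ is generated by the reflections at the non-extracted vertices --- but the theorem lives precisely in translating smoothness of the total space $Y$ into the exclusion of all pairs other than the six listed (alternatively, in the intersection-theoretic argument on $L$ of \cite{hyperplane}), and that entire analysis is missing from your proposal.
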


A {\em universal flopping contraction} morphism $\tilde f: \tilde Y \to \tilde X$ of length $l$ 
is a versal deformation of the contraction morphism
$f_H: L \to H$ of surfaces with the given length as a neighborhood of the exceptional curve $C$
described in the above theorem.
We have $\dim \tilde X = 3,6,8,9,10,10$ if $l = 1,2,3,4,5,6$ (cf. \cite{Karmazyn}).

As a corollary of the above theorem, we deduce that a universal flopping contraction morphism is universal in the following sense: 
for any flopping contraction $f: Y \to X$ of smooth $3$-fold with irreducible exceptional locus and length $l$, there
is a morphism $p: X \to \tilde X$ such that $f$ is isomorphic to the pull back of $\tilde f$ by $p$, so that 
$Y \cong \tilde Y \times_{\tilde X} X$ and $f$ corresponds to the second projection. 

Curto-Morrison \cite{Curto-Morrison} constructed a universal flopping contraction morphism of length $l = 2$ explicitly:

\begin{Thm}\label{Curto-Morrison Thm}
Let $\tilde X$ is a hypersurface given by the following equation in $k^7$:
\begin{equation}\label{universal flop}
F = x^2 + uy^2 + 2vyz + wz^2 + (uw-v^2)t^2 = 0.
\end{equation}
Then there exists a maximally Cohen-Macaulay sheaf $f_*M$ of rank $2$ on $\tilde X$ such that a 
universal flopping contraction morphism of length $l = 2$ is given as a {\em Grassmann blowup} $\tilde f: \tilde Y \to \tilde X$, 
a universal projective birational morphism such that the inverse image modulo torsion $M$ of $f_*M$ become locally free.
\end{Thm}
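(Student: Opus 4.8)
The plan is to separate the statement into three independent tasks: the construction of the rank-$2$ maximal Cohen--Macaulay sheaf $f_*M$ from an explicit matrix factorization of $F$, the construction and the smoothness of the Grassmann blowup $\tilde f\colon \tilde Y \to \tilde X$, and the verification that the resulting family is a versal (hence universal) deformation of the length-$2$ contraction.

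First I would exploit the quadratic-form structure of $F$. Writing $Q = \begin{pmatrix} u & v \\ v & w \end{pmatrix}$, we have $F = x^2 + \begin{pmatrix} y & z \end{pmatrix} Q \begin{pmatrix} y \\ z \end{pmatrix} + \det(Q)\,t^2$, so that $F$ is, up to the square $x^2$, the ternary quadratic form in $(y,z,t)$ with Gram matrix $\mathrm{diag}(Q,\det Q)$. I would then produce an explicit odd matrix $\varphi = x\Gamma_x + y\Gamma_y + z\Gamma_z + t\Gamma_t$ with $\varphi^2 = F\cdot\mathrm{Id}$ by realizing the even Clifford algebra of this form over $k[u,v,w]$ by $4\times 4$ matrices; the required relations $\Gamma_x^2 = \mathrm{Id}$, $\Gamma_y^2 = u\,\mathrm{Id}$, $\Gamma_z^2 = w\,\mathrm{Id}$, $\Gamma_t^2 = (uw-v^2)\,\mathrm{Id}$, $\Gamma_y\Gamma_z + \Gamma_z\Gamma_y = 2v\,\mathrm{Id}$, together with the vanishing of all remaining anticommutators, translate directly into the coefficients of $F$. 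Setting $f_*M := \mathrm{coker}(\varphi)$, Eisenbud's theory of matrix factorizations over the hypersurface $\{F=0\}$ shows that $f_*M$ is maximal Cohen--Macaulay, and restricting to the generic point of $\tilde X$, where $F$ cuts out a smooth quadric, computes its rank to be $2$.

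Next I would construct $\tilde f\colon \tilde Y \to \tilde X$ as the Grassmann blowup on which $\tilde f^*(f_*M)$ modulo torsion becomes locally free of rank $2$: I realize it inside the relative Grassmannian of rank-$2$ quotients of $f_*M$ and take the component dominating $\tilde X$, which is by construction universal among modifications trivializing $f_*M$. The technical heart is to prove that $\tilde Y$ is smooth and that $\tilde f$ is crepant with at most one-dimensional fibers. I would do this chart by chart on the Grassmann blowup, applying the Jacobian criterion to the local equations read off from the entries of $\varphi$; relative triviality of $K_{\tilde Y}$ follows from the Gorenstein property of the hypersurface $\tilde X$ together with an adjunction computation on each chart. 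Identifying the reduced exceptional fiber over the origin with $\mathbf{P}^1$ and computing that the scheme-theoretic fiber has generic length $2$, the latter equal to the rank of $f_*M$ restricted over the generic point of the exceptional curve, places this contraction in the $l=2$, i.e.\ $D_4$, case of the theorem of Katz and Morrison stated above.

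Finally, for universality: the universal flopping contraction of length $2$ is, by definition, a versal deformation of the $D_4$ contraction $f_H\colon L \to H$, and the universal pullback property is then automatic by the corollary of the Katz--Morrison theorem. It therefore suffices to show that the family $\tilde f$ constructed above is such a versal deformation. Since $\dim \tilde X = 6$ matches the dimension of the base of the versal deformation of the $D_4$ contraction, I would complete the argument by verifying that the Kodaira--Spencer map of $\tilde f$ is an isomorphism onto the first-order deformation space of $f_H$. The step I expect to be the main obstacle is the smoothness and length computation on the Grassmann blowup: checking in explicit charts that no singularities are introduced and that the fiber length is exactly $2$ is the delicate part, precisely because it is here that the rank-$2$ (rather than rank-$1$) nature of $f_*M$, and the fact that $uw-v^2$ does not factor over $k[u,v,w]$, genuinely enter.
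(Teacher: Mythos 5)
The paper does not actually prove this statement: Theorem~\ref{Curto-Morrison Thm} is imported wholesale from Curto--Morrison, and the only ingredient the paper reproduces is the explicit matrix factorization $(\Phi,\Psi)=(xI-\Xi,\,xI+\Xi)$ with $\Xi^2=(x^2-F)I_4$. So there is nothing in the text to compare you against line by line; I can only assess your outline against what Curto--Morrison actually do.

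Your first two steps are essentially the right reconstruction. The displayed $\Xi$ is precisely a Clifford-type matrix, linear in $y,z,t$, whose square is $-(uy^2+2vyz+wz^2+(uw-v^2)t^2)I_4$; your $\varphi=x\Gamma_x+y\Gamma_y+z\Gamma_z+t\Gamma_t$ is the same device up to a sign convention. Two caveats: (a) you assert rather than exhibit the $4\times 4$ representation of the Clifford algebra over the polynomial ring $k[u,v,w]$ (not merely over its fraction field); this requires either writing the matrices down, as Curto--Morrison do, or arguing that the relevant (even) Clifford algebra of the form $\mathrm{diag}(Q,\det Q)$ is split, using that its discriminant is the square $(\det Q)^2$. (b) The rank-$2$ claim for $\mathrm{coker}(\varphi)$ needs the remark that at a smooth point of $\{F=0\}$ one has $\mathrm{rank}(\varphi)=2$ exactly, not just $\le 2$. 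The Grassmann blowup and chart-by-chart smoothness plan also matches the source (the charts $U_1,U_2$ with coordinates $\alpha_{12},\alpha_{22},\beta_{12},\beta_{22}$ quoted later in \S4 are exactly these).

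The genuine gap is your third step, the versality. First, the dimension count is off: $6$ is the dimension of the total space $\tilde X$, whereas the base of the versal deformation of the $D_4$ partial resolution is $\mathfrak{h}/W_0$ for the sub-Weyl group generated by the three non-contracted $(-2)$-curves, which is $4$-dimensional; the fibers are the surfaces. Second, and more seriously, you never locate the $D_4$ contraction $f_H\colon L\to H$ inside the family: the fiber of the obvious projection $(x,y,z,u,v,w,t)\mapsto(u,v,w,t)$ over the origin is the non-reduced scheme $x^2=0$, not a $D_4$ surface, so the identification of the special fiber (and hence the domain of any Kodaira--Spencer map) is not the naive one. Finally, what must be versally deformed is the \emph{morphism} $L\to H$ (a simultaneous partial resolution), not the surface $H$ alone, and a posteriori verification of this via Kodaira--Spencer is delicate. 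Curto--Morrison avoid this entirely: following Katz--Morrison and Pinkham they \emph{construct} the family from the invariant-theoretic description of the versal base as a Weyl-group quotient, so versality is built into the construction rather than checked afterwards. If you want to keep your architecture, this is the step that needs to be replaced or substantially expanded.
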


Let $S = k[x,y,z,t,u,v,w]$ be a polynomial ring and let $R = S/(F) = \mathcal{O}_{\tilde X}$.
The sheaf $f_*M$ has a matrix factorization (\cite{Eisenbud}) as follows (\cite{Curto-Morrison}):
it has a resolution by free $S$-modules
\[
\begin{CD}
\dots @>{\Psi}>> S^4 @>{\Phi}>> S^4 @>{\Psi}>> S^4 @>{\Phi}>> S^4 @>>> f_*M @>>> 0 
\end{CD}
\]
where $\Phi=xI-\Xi$ and $\Psi=xI+\Xi$ with
\[
\Xi = \left( \begin{matrix}
-vt & y & z & t \\
-uy-2vz & vt  & -ut & z \\
-wz & wt & -vt & -y \\
-uwt & -wz & uy+2vz & vt
\end{matrix} \right)
\]
such that 
\[
\Phi \Psi = \Psi \Phi = FI_4.
\]

$f_*M$ and $R = \mathcal{O}_{\tilde X}$ are the indecomposable maximally Cohen-Macaulay sheaves on $\tilde X$.
According to Van den Bergh (\cite{VdBergh}), 
$M \oplus \mathcal{O}_{\tilde Y}$ become a tilting generator of $D(Q\text{coh}(\tilde Y))$, and the category of the 
perverse coherent sheaves ${}^{-1}\text{Perv}(\tilde Y/\tilde X)$
(denoted $\text{Perv}(\tilde Y/\tilde X)$ in \S2) is defined as the subcategory of $D^b(\text{coh}(\tilde Y))$ 
which corresponds to the category of modules $(\text{mod-}A)$ under the Bondal-Rickard equivalence
\[
D^b(\text{coh}(\tilde Y)) \cong D^b(\text{mod-}A)
\]
where $A = \text{End}(M \oplus \mathcal{O}_{\tilde Y})$ is a sheaf of associative algebras on $\tilde X$

\vskip 1pc

The algebra $A$ is determined by Aspinwall-Morrison \cite{AM}.
There are generators: $a,b: f_*M \to f_*M$, $c: f_*M \to R$, $d: R \to f_*M$
expressed by using the matrix factorization: we have 
\[
\begin{split}
&a = \left( \begin{matrix} 
0 & 1 & 0 & 0 \\
-u & 0 & 0 & 0 \\
-2v & 0 & 0 & 1 \\
0 & 2v & -u & 0 \end{matrix} \right), \quad 
b =  \left( \begin{matrix} 
0 & 0 & 1 & 0 \\
0 & 0 & 0 & -1 \\
-w & 0 & 0 & 0 \\
0 & w & 0 & 0 \end{matrix} \right), \\
&c = \left( \begin{matrix} x-vt & y & z & t \end{matrix} \right), \quad 
d = \left( \begin{matrix} 
0 \\
0 \\
0 \\
1 \end{matrix} \right).
\end{split}
\]
For example, we have a commutative diagram
\[
\begin{CD}
\dots @>{\Psi}>> S^4 @>{\Phi}>> S^4 @>>> f_*M @>>> 0 \\
@. @VaVV @VaVV @VaVV @. \\
\dots @>{\Psi}>> S^4 @>{\Phi}>> S^4 @>>> f_*M @>>> 0
\end{CD}
\]
where we used the same symbol $a$ for an element of $A$ and its lift given by a matrix.

\begin{Thm}\label{normal bundle}
The normal bundle of the reduced fiber $C = \tilde f^{-1}(0)_{\text{red}}$ is given by 
\[
N_{C/\tilde X} \cong \mathcal{O}_C(1) \oplus \mathcal{O}_C \oplus \mathcal{O}_C(-1)^3.
\]
\end{Thm}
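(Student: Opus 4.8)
The plan is to compute the normal bundle $N_{C/\tilde X}$ by explicitly identifying the reduced exceptional curve $C$ inside the Grassmann blowup $\tilde Y$ and restricting the relative tangent/normal sequences to $C$. Since $\dim \tilde X = 6$ and $C \cong \mathbf P^1$, the normal bundle is a rank-$5$ vector bundle on $\mathbf P^1$, hence splits as $\bigoplus_{i=1}^{5}\mathcal O_C(a_i)$ by Grothendieck's theorem. The task is therefore to pin down the five integers $a_i$. First I would use that $K_{\tilde Y}$ is relatively trivial (the contraction is a flop) together with adjunction on $C$: since $\deg K_C = -2$ and $\deg(K_{\tilde Y}|_C)$ is controlled by the flopping condition, the determinant $\det N_{C/\tilde X} = \bigwedge^{5} N_{C/\tilde X}$ has a degree forced by $K_{\tilde X}$ being Gorenstein (the hypersurface $\tilde X = \{F=0\}$ gives $K_{\tilde X} = \mathcal O_{\tilde X}$). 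This should fix $\sum_i a_i$.

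Next I would exploit the explicit Curto--Morrison description. The scheme-theoretic fiber $\tilde f^{-1}(0)$ has multiplicity $2$ (length $l=2$), so $C$ is the reduced structure of a non-reduced fiber; I would locate $C$ concretely from the matrix factorization data $(\Phi,\Psi)$ and the generators $a,b,c,d$ of $A$. The key computation is to restrict the defining equation $F = x^2 + uy^2 + 2vyz + wz^2 + (uw-v^2)t^2$ to a neighborhood of the exceptional locus and read off the conormal bundle $N^*_{C/\tilde X}$ as the restriction $I_C/I_C^2$. Because $\tilde X$ is a hypersurface in $k^7$, one has the conormal sequence
\[
0 \to N^*_{\tilde X/k^7}\big|_C \to \Omega^1_{k^7}\big|_C \to \Omega^1_{\tilde X}\big|_C \to 0,
\]
and combining this with the conormal sequence for $C \subset \tilde Y$ and the isomorphism $\Omega^1_{\tilde Y}|_C$ away from the flop should let me compute each graded piece. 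I expect the $\mathcal O_C(1)$ and $\mathcal O_C(-1)^{3}$ summands to come from the directions transverse to the surface slice $L \to H$ (the deformation directions of Laufer/Katz--Morrison type), while the $\mathcal O_C(1)\oplus\mathcal O_C(-1)$ behavior inside the $3$-fold slice reproduces the length-$2$ flop normal bundle, and the extra $\mathcal O_C^{\,\text{(trivial)}}$ together with the remaining $\mathcal O_C(-1)$ factors come from the three extra base parameters among $u,v,w,t$.

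The main obstacle will be the precise bookkeeping of the splitting type rather than the total degree: Grothendieck splitting is not stable under deformation, so I cannot simply read off $a_i$ from a naive degree count on each coordinate direction. Concretely, I must check that no summand jumps (e.g. that the answer is $\mathcal O_C(1)\oplus\mathcal O_C\oplus\mathcal O_C(-1)^3$ and not, say, $\mathcal O_C\oplus\mathcal O_C\oplus\mathcal O_C\oplus\mathcal O_C(-1)\oplus\mathcal O_C(-1)$ with the same determinant). To rule this out I would compute $H^0(N_{C/\tilde X})$ and $H^1(N_{C/\tilde X})$ directly via the spectral sequence relating $\mathcal Ext^\bullet(\mathcal O_C,\mathcal O_C)$ to the $\text{Ext}$ groups — exactly the setup of Theorem~\ref{symmetric} — and match the dimensions against each candidate splitting; only $\mathcal O_C(1)\oplus\mathcal O_C\oplus\mathcal O_C(-1)^3$ will have $h^0 = 3$ and $h^1 = 3$ consistent with the deformation theory of this length-$2$ flop. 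Establishing these cohomology dimensions unambiguously, most likely by pushing the restriction of $(\Phi,\Psi)$ to $C$ and computing the induced maps explicitly, is the delicate part of the argument.
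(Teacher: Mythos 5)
Your plan has a genuine gap at exactly the step you yourself flag as delicate. First, a factual error: for $N=\mathcal{O}_C(1)\oplus\mathcal{O}_C\oplus\mathcal{O}_C(-1)^3$ one has $h^0(N)=3$ but $h^1(N)=0$, not $3$ (each $\mathcal{O}_C(-1)$ summand contributes to neither cohomology group); the paper's remark following the theorem records $H^1(N_{C/\tilde Y})=0$, and the five obstructions live in $H^0(\bigwedge^2 N)$, not in $H^1(N)$. More seriously, even the correct invariants $(\deg N,h^0,h^1)=(-2,3,0)$ do not determine the splitting type: the very candidate you propose to rule out, $\mathcal{O}_C^{\,3}\oplus\mathcal{O}_C(-1)^2$, has the same degree, the same $h^0=3$ and the same $h^1=0$, as does $\mathcal{O}_C(2)\oplus\mathcal{O}_C(-1)^4$. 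So disambiguating by matching $h^0$ and $h^1$ against candidate splittings cannot work; you would need finer data such as $h^0(N\otimes\mathcal{O}_C(-1))$ and $h^0(N\otimes\mathcal{O}_C(-2))$, which in practice forces you back to an explicit presentation of $N$. A smaller but real issue: $C$ is contracted to the point $0\in\tilde X$, so there is no conormal sequence for $C\subset\tilde X$ to restrict (the subscript $\tilde X$ in the statement should be read as $\tilde Y$), and manipulating $F$ as a hypersurface equation in $k^7$ does not see the blowup directions where the whole computation lives.

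The paper's proof is a direct chart computation that sidesteps all of this. Curto--Morrison provide two affine charts $U_1,U_2$ covering a neighborhood of $C$ in $\tilde Y$, with coordinates $(z,t,u,v,\alpha_{12},\alpha_{22})$ and $(y,t,v,w,\beta_{12},\beta_{22})$ glued by $\alpha_{12}\beta_{12}=1$, $\alpha_{22}\beta_{12}=\beta_{22}$, together with explicit expressions for the remaining ambient coordinates. One reads off generators of $I_C/I_C^2$ in each chart ($(z,t,u,v,\alpha_{22})$ on $U_1$, $(y,t,v,w,\beta_{22})$ on $U_2$) and, using the relations modulo $I_C^2$, exhibits an explicit filtration of the conormal bundle by line subbundles of degrees $1,1,1,0,-1$; dualizing gives the stated splitting. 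Your adjunction computation of $\deg N=-2$ is correct but is never needed once the line subbundles are in hand. If you want to rescue your outline, replace the $h^0/h^1$ test by the chart-by-chart identification of these five line subbundles.
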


\begin{proof}
By \cite{Curto-Morrison}, a neighborhood of $C \subset \tilde Y$ is covered by two open subsets $U_1, U_2$. 
$U_1$ has coordinates $(z,t,u,v,\alpha_{12}, \alpha_{22})$ such that 
the coordinates $x,y,w$ of $\tilde X$ are given by (\cite{Curto-Morrison}~formulas (44), (45), (48)):
\[
\begin{split}
&y + \alpha_{12}z + \alpha_{22}t = 0, \\
&w + \alpha_{22}^2 + \alpha_{12}^2u - 2\alpha_{12}v = 0, \\
&x+vt - \alpha_{12}ut + \alpha_{22}z = 0, 
\end{split}
\]
while $U_2$ has coordinates $(y,t,v,w,\beta_{12}, \beta_{22})$ such that (\cite{Curto-Morrison}~formula (57)):
\[
\begin{split}
&\beta_{12}y + z + \beta_{22}t = 0, \\
&\beta_{12}^2w + \beta_{22}^2 + u - 2\beta_{12}v = 0, \\
&x - vt + \beta_{12}wt - \beta_{22}y = 0.
\end{split}
\]
These formulas are equivalent under the transformation: 
\[
\alpha_{12}\beta_{12} = 1, \quad \alpha_{22}\beta_{12} = \beta_{22}.
\]

The conormal bundle of the reduced central fiber $C$ is generated by $(z,t,u,v,\alpha_{22})$ in $U_1$ 
and $(y,t,v,w,\beta_{22})$ in $U_2$.
Since we have 
\[
y \equiv - \alpha_{12}z \mod I_C^2
\] 
there is a subbundle $L_1$ of degree $1$ of $N^*_{C/\tilde Y}$ generated by $z$ in $U_1$ and by $y$ in $U_2$.
We have also 
\[
w \equiv - \alpha_{12}^2u + 2\alpha_{12}v = \alpha_{12}(- \alpha_{12}u + 2v) \mod I_C^2
\]
and there is a subbundle $L_2$ of degree $1$ generated by $- \alpha_{12}u + 2v$ in $U_1$ and by $w$ in $U_2$.
A relation 
\[
2v \equiv \alpha_{12}u \mod L_2
\]
gives a subbundle $L_3$ of degree $1$ generated by $u$ in $U_1$ and by $2v$ in $U_2$.

$t$ generates a subbundle $L_4$ of degree $0$.
There is a subbundle $L_5$ of degree $-1$ generated by $\alpha_{22}$ in $U_1$ and by $\beta_{22}$ in $U_2$.
Therefore we have our claim.
\end{proof}

\begin{Rem}
We have $H^1(N_{C/\tilde Y}) = 0$.
Hence the commutative deformations of $C$ in $\tilde Y$ have no obstruction.

We have $\dim \text{Ext}^1(\mathcal{O}_C,\mathcal{O}_C) = \dim H^0(N_{C/\tilde X}) = 3$, hence
the tangent spaces of commutative and 
non-commutative deformations are the same.
We have a basis $\{a,b,t\}$ of the cotangent space
$H^0(N_{C/\tilde X})^* = \text{Ext}^1(\mathcal{O}_C,\mathcal{O}_C)^*$.
On the other hand, we have 
\[
\bigwedge^2 N_{C/\tilde X} \cong \mathcal{O}_C(1) \oplus \mathcal{O}_C^3 \oplus \mathcal{O}_C(-1)^3 \oplus \mathcal{O}_C(-2)^3.
\]
Hence $\dim \text{Ext}^2(\mathcal{O}_C,\mathcal{O}_C) = \dim H^0(\bigwedge^2 N_{C/\tilde X}) = 5$, and
there are $5$ relations among the generators $a,b,t$ in NC deformations.
\end{Rem}

First we determine the NC deformation algebra for the reduced fiber $C = \tilde f^{-1}(0)_{\text{red}}$:

\begin{Thm}\label{univ def alg length 2}
The NC deformation algebra of $\mathcal{O}_C$ on $\tilde Y$ is given by
\[
\begin{split}
&A^1_{\text{def}} = k\langle \langle a,b,t \rangle \rangle/(at-ta,bt-tb,tab-tba, ab^2-b^2a, a^2b-ba^2) \\
&= k[[t]] \langle \langle a,b\rangle \rangle/(tab-tba, ab^2-b^2a, a^2b-ba^2).
\end{split}
\]
\end{Thm}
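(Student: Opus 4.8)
The plan is to run the $A^\infty$ description of §3, presenting $A^1_{\text{def}}$ as $k\langle\langle a,b,t\rangle\rangle$ modulo the dual of the multiplications $m_n$ on $\text{Ext}^*(\mathcal{O}_C,\mathcal{O}_C)$, and to pin down the five defining relations (one per basis vector of $\text{Ext}^2$, which by the Remark has dimension $5$) by computing the leading terms of $m^*$. The three generators are the basis $\{a,b,t\}$ of $\text{Ext}^1(\mathcal{O}_C,\mathcal{O}_C)^*=H^0(N_{C/\tilde Y})^*$; under the splitting $N_{C/\tilde Y}\cong\mathcal{O}_C(1)\oplus\mathcal{O}_C\oplus\mathcal{O}_C(-1)^3$ of Theorem \ref{normal bundle}, $a$ and $b$ come from the rank-one summand $\mathcal{O}_C(1)$ and $t$ from the trivial summand.

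First I would determine the quadratic relations via $m_2$. Every summand of $N_{C/\tilde Y}$ has vanishing $H^1$, so $H^1(N_{C/\tilde Y})=0$ and Proposition \ref{skew-symmetric} identifies $m_2$ with the skew wedge pairing $H^0(N)\otimes H^0(N)\to H^0(\bigwedge^2 N)$. Since $a,b$ are sections of the \emph{same} line subbundle $\mathcal{O}_C(1)$, we get $m_2(a,b)=a\wedge b=0$, so the image of $m_2$ is spanned by $a\wedge t$ and $b\wedge t$ in the two-dimensional $\mathcal{O}_C(1)=\mathcal{O}_C(1)\wedge\mathcal{O}_C$ part of $H^0(\bigwedge^2 N)$. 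By the sign rule of §3, skew-symmetry converts these into the commutators $at-ta$ and $bt-tb$; in particular $t$ is central. The remaining three relations are dual to the complementary summand $H^0(\mathcal{O}_C^{\,3})$ coming from $\mathcal{O}_C(1)\wedge\mathcal{O}_C(-1)^3$, which is orthogonal to $\text{im}(m_2)$; hence their quadratic parts vanish and their leading terms are cubic, governed by $m_3$.

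For the cubic relations I would use the explicit endomorphisms $a,b$ of the tilting summand $M$. Multiplying the given $4\times4$ matrices yields the Clifford-type identities $a^2=-uI$, $ab+ba=-2vI$, $b^2=-wI$ in $\text{End}(M)$, so $u=-a^2$, $v=-\tfrac12(ab+ba)$, $w=-b^2$ are central; their centrality is precisely the two cubic relations $a^2b-ba^2=0$ and $ab^2-b^2a=0$. The fifth, genuinely non-commutative relation $t(ab-ba)=0$ is the delicate one. Here $ab-ba$ is nonzero in $\text{End}(M)$ (explicitly $ab-ba=2K$ for a nonzero matrix $K$), and I would produce the relation by computing the triple product $m_3$ on the $\mathcal{O}_C(1)\wedge\mathcal{O}_C(-1)^3$ component, equivalently by an explicit null-homotopy of the matrix factorization exhibiting $t(ab-ba)$ as an obstruction while $ab-ba$ survives as nonzero $t$-torsion. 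The identity $(ab-ba)^2=-4(uw-v^2)$, combined with the defining equation $F=x^2+uy^2+2vyz+wz^2+(uw-v^2)t^2=0$, provides a consistency check in degree four, but the degree-three relation itself has to come from $m_3$.

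Finally I would finish by a dimension count: the five relations $at-ta$, $bt-tb$, $t(ab-ba)$, $ab^2-b^2a$, $a^2b-ba^2$ all hold, and their leading terms (two quadratic spanning $\text{im}(m_2)$, three cubic spanning the complement) are independent and exhaust $\text{Ext}^2(\mathcal{O}_C,\mathcal{O}_C)^*$ of dimension $5$; therefore they generate the defining ideal and no further relations occur. Centrality of $t$ collapses the presentation to $k[[t]]\langle\langle a,b\rangle\rangle/(t(ab-ba),ab^2-b^2a,a^2b-ba^2)$. I expect the main obstacle to be exactly the fifth relation: extracting $m_3$ (or the equivalent homotopy) in closed form and, at the same time, checking that the $A^\infty$-multiplications contribute nothing in degree $\ge4$, so that the five relations listed are the exact defining relations and not merely their lowest-order truncations.
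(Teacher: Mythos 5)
Your overall shape (three generators $a,b,t$, two quadratic relations from $m_2$, three cubic ones, and a count against $\dim\operatorname{Ext}^2=5$) matches the paper's, and your $m_2$ analysis via Proposition~\ref{skew-symmetric} and the wedge pairing is correct: sections of the same line summand $\mathcal{O}_C(1)$ wedge to zero, so the only quadratic relations are $at-ta$ and $bt-tb$. But there are two genuine gaps. First, and most importantly, the relation $tab-tba=0$ is never proved: you explicitly defer it to an unexecuted computation of $m_3$ or a null-homotopy of the matrix factorization. This is precisely the step the paper handles by a different mechanism. The paper first invokes Theorem~\ref{partial} (with $J$ the singleton indexing $\mathcal{O}_C(-1)[1]$, so $J^c$ corresponds to $\mathcal{O}_{\tilde Y}$) to identify $A^1_{\text{def}}$ with the explicit quotient $A/I^1$ of the Aspinwall--Morrison algebra, and then proves by a direct matrix computation modulo $\operatorname{Im}(\Phi)$ that $y=tb$ holds in $A^1_{\text{def}}$; since $y\in R$ is central, $atb=tba$, and centrality of $t$ gives $tab=tba$. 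No $A^\infty$ product ever needs to be computed.

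Second, your argument mixes two frameworks without the bridge that makes the mixture legitimate. You derive $a^2b-ba^2=ab^2-b^2a=0$ from the Clifford identities $a^2=-u$, $b^2=-w$, $ab+ba=-2v$ in $\operatorname{End}(M)$, but on your setup $A^1_{\text{def}}$ is defined as $\hat T(\operatorname{Ext}^1{}^*)/m^*(\operatorname{Ext}^2{}^*)$, and there is no a priori map from $\operatorname{End}(M\oplus\mathcal{O}_{\tilde Y})$ to that quotient. The identification $A^1_{\text{def}}=A/I^1$ from Theorem~\ref{partial} is exactly what licenses reading off relations from the matrices, and it simultaneously disposes of your final worry: once the five relations are verified in the concretely known algebra $A/I^1$ and their leading terms are independent, the count $\dim\operatorname{Ext}^2=5$ closes the argument, with no need to check that $m_n$ contributes nothing for $n\ge 4$. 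Without that identification, your closing dimension count only pins down the relations to leading order, which is the very issue you flag at the end. To repair the proof, insert the appeal to Theorem~\ref{partial} at the outset and replace the $m_3$ computation by the identity $y=tb$ (verified on the matrix factorization).
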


\begin{proof}
The set of simple objects in ${}^{-1}\text{Perv}(\tilde Y/\tilde X)$ corresponding to the set of indecomposable
projective objects $\{M,\mathcal{O}_{\tilde Y}\}$ is $\{\mathcal{O}_C(-1)[1], \mathcal{O}_{\tilde C}\}$, where $\tilde C$ is the scheme theoretic fiber
$\tilde f^{-1}(0)$ which has length $2$.
In other words, we have 
\[
\begin{split}
\text{Hom}(M,\mathcal{O}_C(-1)[1]) \cong k, \quad \text{Hom}(M,\mathcal{O}_{\tilde C}) \cong 0 \\
\text{Hom}(\mathcal{O}_{\tilde Y},\mathcal{O}_C(-1)[1]) \cong 0, \quad \text{Hom}(\mathcal{O}_{\tilde Y},\mathcal{O}_{\tilde C}) \cong k.
\end{split}
\]
Therefore we have $A^1_{\text{def}} = A/I^1$ by Theorem~\ref{partial}, where $I^1$ is a two-sided ideal generated by endomorphisms 
which are compositions of $2$ homomorphisms of the form 
$M \oplus \mathcal{O}_{\tilde Y} \to \mathcal{O}_{\tilde Y} \to M \oplus \mathcal{O}_{\tilde Y}$.
Therefore $A^1_{\text{def}}$ is generated by $a,b$ over $R$.

We can check the following equations by examining the matrices as endomorphisms of $S^4$:
\begin{equation}\label{u,v,w}
\begin{split}
&a^2 = - u \\
&b^2 = - w \\
&ab+ba = -2v.
\end{split}
\end{equation}

We have 
\[
\begin{split}
&y - tb + bdc + dcb \\
&= \left( \begin{matrix} 
y & 0 & -t & 0 \\
0 & y & 0 & t \\
wt & 0 & y & 0 \\
0 & -wt & 0 & y 
\end{matrix} \right)
+ \left( \begin{matrix} 
0 & 0 & 0 & 0 \\
-x+vt & -y & -z & -t \\
0 & 0 & 0 & 0 \\
0 & 0 & 0 & 0 
\end{matrix} \right)
+ \left( \begin{matrix} 
0 & 0 & 0 & 0 \\
0 & 0 & 0 & 0 \\
0 & 0 & 0 & 0 \\
-wz & wt & x-vt & -y 
\end{matrix} \right) \\
&= \left( \begin{matrix} 
y & 0 & -t & 0 \\
-x+vt & 0 & -z & 0 \\ 
wt & 0 & y & 0 \\
-wz & 0 & x-vt & 0
\end{matrix} \right) \equiv 0 \mod \text{Im}(\Phi)
\end{split}
\]
as endomorphisms of $S^4$, so that we have
\[
y - tb + bdc + dcb = 0
\]
as endomorphisms of $f_*M = \text{Coker}(\Phi)$, hence 
\begin{equation}\label{y}
y = tb
\end{equation}
in $A^1_{\text{def}}$.
We also have
\[
\begin{split}
&z + ta - adc - dca \\
&= \left( \begin{matrix} 
z & t & 0 & 0 \\
-ut & z & 0 & 0 \\ 
-2vt & 0 & z & t \\
0 & 2vt & -ut & z
\end{matrix} \right) 
- \left( \begin{matrix} 
0 & 0 & 0 & 0 \\
0 & 0 & 0 & 0 \\ 
x-vt & y & z & t \\
0 & 0 & 0 & 0
\end{matrix} \right) 
- \left( \begin{matrix} 
0 & 0 & 0 & 0 \\
0 & 0 & 0 & 0 \\ 
0 & 0 & 0 & 0 \\
-uy-2vz & x+vt & -ut & z
\end{matrix} \right) \\
&= \left( \begin{matrix} 
z & t & 0 & 0 \\
-ut & z & 0 & 0 \\ 
-x-vt & -y & 0 & 0 \\
uy+2vz & -x+vt & 0 & 0
\end{matrix} \right) \equiv 0 \mod \text{Im}(\Phi) \\
&x + vt + tba - dcab + badc \\
&= \left( \begin{matrix} 
x-vt & 0 & 0 & t \\
0 & x-vt & ut & 0 \\ 
0 & -wt & x+vt & 0 \\
-uwt & 0 & 0 & x+vt
\end{matrix} \right)
+ \left( \begin{matrix} 
0 & 0 & 0 & 0 \\
0 & 0 & 0 & 0 \\ 
0 & 0 & 0 & 0 \\
uwt & wz & -uy-2vz & -x-vt
\end{matrix} \right) \\
&- \left( \begin{matrix} 
x-vt & y & z & t \\
0 & 0 & 0 & 0 \\ 
0 & 0 & 0 & 0 \\
0 & 0 & 0 & 0
\end{matrix} \right) 
= \left( \begin{matrix} 
0 & -y & -z & 0 \\
0 & x-vt & ut & 0 \\ 
0 & -wt & x+vt & 0 \\
0 & wz & -uy-2vz & 0
\end{matrix} \right) \equiv 0 \mod \text{Im}(\Phi).
\end{split}
\]
Hence
\[
\begin{split}
&z = - ta \\
&x = - tba - vt
\end{split}
\]
in $A^1_{\text{def}}$.
Therefore $A^1_{\text{def}}$ is generated by $a,b,t$ over $k$.

We determine quadratic relations.
Since $H^1(N_{C/\tilde X}) = 0$, they are skew-symmetric bilinear form (Proposition~\ref{skew-symmetric}).
Since $t \in R$ is in the center, we have $ta-at=0$ and $tb-bt=0$, and there are no more quadratic terms.
Since there are $5$ generators of the relation ideal, there are $3$ more relations.

Since $ay=ya$, we have $tab = atb = tba$, hence $tab-tba=0$.
Since $a^2,b^2$ are in the center, we have 
$ab^2-b^2a = a^2b-ba^2 = 0$.
These $3$ relations are order $3$ and linearly independent.
Therefore there are no more relations.
\end{proof}

The commutative deformations of $\mathcal{O}_C$ on $\tilde Y$ is given by the following.
We denote by $(A^1_{\text{def}})^{ab}$ the abelianization of the NC deformation algebra 
$A^1_{\text{def}}$, which is the parameter algebra of commutative deformations.
(This is because the natural homomorphism $A^1_{\text{def}} \to (A^1_{\text{def}})^{ab}$ is universal
among local homomorphisms $A^1_{\text{def}} \to B$ to Artin local algebras). 

\begin{Cor}
$(A^1_{\text{def}})^{ab} = k[[a,b,t]]$. 
\end{Cor}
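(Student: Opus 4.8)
The plan is to compute the abelianization of
\[
A^1_{\text{def}} = k\langle\langle a,b,t\rangle\rangle/(at-ta,\,bt-tb,\,tab-tba,\,ab^2-b^2a,\,a^2b-ba^2)
\]
directly, by showing that once we impose commutativity all the listed relations become trivially satisfied. First I would recall that the abelianization is obtained by adjoining to the defining ideal the commutators $ab-ba$, $at-ta$, $bt-tb$ of the generators, and then presenting the quotient of the \emph{commutative} power series ring $k[[a,b,t]]$ by the images of the five generators of $I$. The key observation is that every one of the five relations $at-ta$, $bt-tb$, $tab-tba$, $ab^2-b^2a$, $a^2b-ba^2$ already lies in the two-sided ideal generated by the commutators $[a,b]$, $[a,t]$, $[b,t]$: the first two are commutators outright, and $tab-tba = t[a,b]$, $ab^2-b^2a = ab\cdot b - b\cdot ba = [a,b]b + b[a,b]$, and similarly $a^2b-ba^2 = a[a,b]+[a,b]a$. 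Thus modulo the commutator ideal all five relations vanish.

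Concretely, the argument I would carry out is: the natural surjection $k\langle\langle a,b,t\rangle\rangle \to (A^1_{\text{def}})^{ab}$ factors through $k[[a,b,t]]$, since the target is commutative by definition. Conversely, the images in $k[[a,b,t]]$ of the five generators of $I$ are all zero, because each becomes an equality of monomials in commuting variables (for instance $tab-tba \mapsto tab-tab = 0$). Hence the composite $k[[a,b,t]] \to (A^1_{\text{def}})^{ab}$ is an isomorphism, giving $(A^1_{\text{def}})^{ab} \cong k[[a,b,t]]$.

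The only genuinely substantive point, rather than a formal one, is to justify that the abelianization is exactly $k[[a,b,t]]$ and not some proper quotient of it; that is, that imposing commutativity does \emph{not} force any further relation among $a,b,t$ beyond the commutators. This follows precisely because, as computed above, the five generators of $I$ each land in the commutator ideal, so they contribute nothing new once $a,b,t$ are made to commute. In other words, the commutative deformations are unobstructed, which is consistent with the Remark noting that $H^1(N_{C/\tilde Y})=0$. I expect the main (and essentially only) obstacle to be the bookkeeping verification that each generator of $I$ reduces into the commutator ideal, which is the short computation indicated above and is entirely routine; there is no deeper difficulty.
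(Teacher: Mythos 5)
Your proof is correct and is exactly the (implicit) argument behind the paper's corollary, which is stated without proof: each of the five defining relations of $A^1_{\text{def}}$ lies in the commutator ideal (e.g.\ $tab-tba=t[a,b]$, $ab^2-b^2a=[a,b]b+b[a,b]$, $a^2b-ba^2=a[a,b]+[a,b]a$), so abelianization imposes nothing beyond commutativity and yields the free commutative power series ring $k[[a,b,t]]$.
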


Next we investigate NC deformations of the scheme theoretic fiber $\tilde C = \tilde f^{-1}(0)$.

\begin{Thm}\label{univ def alg length 2-2}
The NC deformation algebra of $\mathcal{O}_{\tilde C}$ on $\tilde Y$ is given by
\[
A^2_{\text{def}} = k[[u,v,w]].
\] 
\end{Thm}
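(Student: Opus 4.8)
The plan is to run Theorem~\ref{partial} in exactly the same way as in the proof of Theorem~\ref{univ def alg length 2}, but with the complementary choice of simple object. The two simple objects of ${}^{-1}\text{Perv}(\tilde Y/\tilde X)$ are $s_1 = \mathcal{O}_C(-1)[1]$, corresponding to the indecomposable projective $M$, and $s_2 = \mathcal{O}_{\tilde C}$, corresponding to $\mathcal{O}_{\tilde Y}$. To deform $\mathcal{O}_{\tilde C}$ I would take $J = \{2\}$, so that $J^c = \{1\}$ and $P_{J^c} = M$. Theorem~\ref{partial}(3) then gives $A^2_{\text{def}} = A/I^2$, where $I^2 = A e_1 A$ is the two-sided ideal generated by the idempotent $e_1 = \mathrm{id}_M$, i.e.\ by all endomorphisms of $M \oplus \mathcal{O}_{\tilde Y}$ that factor through $M$.

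Next I would reduce this quotient to a corner. Writing $e_2 = \mathrm{id}_{\mathcal{O}_{\tilde Y}}$, the standard idempotent computation $A/Ae_1A \cong e_2 A e_2/(e_2 A e_1 A e_2)$ identifies $A^2_{\text{def}}$ with $R/J$, where $e_2 A e_2 = \text{End}(\mathcal{O}_{\tilde Y}) = R = \mathcal{O}_{\tilde X}$ and $J = e_2 A e_1 A e_2$ is generated by the ``round-trip'' endomorphisms $c\,\alpha\,d$, with $\alpha$ ranging over $\text{End}(f_*M)$, that is, over words in the generators $a,b$ with $R$-coefficients. Since the entries of the row $c = (x - vt,\ y,\ z,\ t)$ all lie in $(x,y,z,t)$ and each $c\,\alpha\,d$ is an $R$-combination of these entries, the inclusion $J \subseteq (x,y,z,t)$ is automatic.

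For the reverse inclusion I would compute the low-degree round trips directly from the matrices for $a,b,c,d$ (recalling that $d$ is the fourth standard basis vector): $cd = t$, $cad = z$, $cbd = -y$, and $cabd = -(x+vt)$. These already show $t,z,y \in J$, and combined with $t \in J$ they give $x \in J$ as well, so $J = (x,y,z,t)$. Finally, since $F = x^2 + uy^2 + 2vyz + wz^2 + (uw - v^2)t^2$ lies in $(x,y,z,t)$ (indeed in $(x,y,z,t)^2$), passing to $R = S/(F)$ yields $A^2_{\text{def}} = R/(x,y,z,t) = k[[u,v,w]]$, as claimed. One should also record that the remaining round trips for longer words produce nothing new: for instance $ca^2d = -ut$, $cb^2d = -wt$ (consistent with the relations $a^2=-u$, $b^2=-w$ of \eqref{u,v,w}) and $cbad = x - vt$ all lie in $(x,y,z,t)$, as guaranteed by the containment above.

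The main obstacle is the explicit matrix bookkeeping in the last two steps: one must verify that the maps $a,b,c,d$ compose, as matrices acting on the free resolution modulo $\text{Im}(\Phi)$, to give exactly the coordinate functions $t,z,-y,-(x+vt)$, and confirm that no higher-order round trip escapes $(x,y,z,t)$. The conceptual content -- the idempotent reduction $A/Ae_1A \cong e_2Ae_2/e_2Ae_1Ae_2$ together with the observation that $J \subseteq (x,y,z,t)$ is forced by the shape of $c$ -- keeps the computation short, so the only genuine work is confirming the four basic products and the vanishing of $F$ modulo $(x,y,z,t)$.
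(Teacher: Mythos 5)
Your proposal is correct, and the first half (reduction via Theorem~\ref{partial} to $A/Ae_1A$, then the round trips $cd=t$, $cad=z$, $cbd=-y$ and a length-two word in $a,b$ giving $x$ modulo $(vt)$) is exactly what the paper does. Where you genuinely diverge is in the crucial opposite containment, i.e.\ the proof that $A^2_{\text{def}}$ is not a \emph{proper} quotient of $k[[u,v,w]]$. You argue algebraically: since every morphism $f_*M \to R$ factors as $c\gamma$ (and dually every $R \to f_*M$ as $\gamma' d$), the corner ideal $J = e_2Ae_1Ae_2$ is spanned by elements $c\alpha d$, each of which lies in $(x,y,z,t)$ because the entries of the row $c$ do; hence $J=(x,y,z,t)$ exactly. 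The paper instead exhibits an explicit flat family: the pullback $E \to D$ of $\tilde f$ over the $3$-plane $D=\{x=y=z=t=0\}\subset\tilde X$ is a flat commutative deformation of $\tilde C$ over all of $\text{Spec}\,k[u,v,w]$, which by versality rules out further relations. Your route is shorter and self-contained on the algebra side, but note that it leans on the \emph{completeness} of the Aspinwall--Morrison generating set (you need that $c$ generates $\text{Hom}(f_*M,R)$ as a right $\text{End}(f_*M)$-module, not merely that $a,b,c,d$ are \emph{some} morphisms); also, $\text{End}(f_*M)$ is generated by $a,b$ \emph{and} $dc$ over $R$, not by $a,b$ alone as your parenthetical suggests, though this is harmless since your containment argument works for an arbitrary matrix $\alpha$. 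The paper's geometric argument avoids relying on that presentation and, as a bonus, identifies the deformation space with the component $S_2$ of $\text{Sing}(\tilde X)$, which is used in the subsequent proposition.
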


\begin{proof}
We have $A^2_{\text{def}} = A/I^2$ by Theorem~\ref{partial}, where $I^2$ is a two-sided ideal generated by endomorphisms 
which are compositions of $2$ homomorphisms of the form 
$M \oplus \mathcal{O}_{\tilde Y} \to M \to M \oplus \mathcal{O}_{\tilde Y}$.
Therefore $A^2_{\text{def}}$ is a quotient ring of $R$.
In particular, there are only commutative deformations.

We have 
\[
cd = t, \quad cad = z, \quad cbd = -y, \quad cbad = x - vt.
\]
Therefore $A^2_{\text{def}}$ is a quotient ring of $k[[u,v,w]]$.

There is an affine space $D$ of dimension $3$ with coordinates $(u,v,w)$ defined by $x=y=z=t=0$ 
contained in $\tilde X$.
If we pull it back by $\tilde f$, we obtain a $4$-dimensional smooth subspace $E$ of $\tilde Y$  
with coordinates $(u,v,\alpha_{12},\alpha_{22})$ on $U_1$ and $(v,w,\beta_{12},\beta_{22})$ on $U_2$, where
we used the notation of the proof of Theorem~\ref{normal bundle}.
The morphism $\tilde f_E: E \to D$ is given by
\[
\begin{split}
&w = - \alpha_{22}^2 - \alpha_{12}^2u + 2\alpha_{12}v \\
&u = - \beta_{22}^2 - \beta_{12}^2w + 2\beta_{12}v.
\end{split}
\]
Thus $\tilde f_E$ is a flat morphism and gives a commutative deformation of the fiber $\tilde C$
with the whole space $D$ as a parameter space. 
By taking the completion at the origin, we obtain the formal power series ring as stated.
\end{proof}

We determine the singular locus of $\tilde X$ as well as the singularities of along it:

\begin{Lem}
(1) The singular locus of $\tilde X$ consists of two irreducible components 
$\text{Sing}(\tilde X) = S_1 \cup S_2$ with 
\[
\begin{split}
&S_1 = \{x = uw - v^2 = uy+vz = y^2 + wt^2 = z^2+ut^2 = 0\}, \\
&S_2 = \{x=y=z=t=0\}.
\end{split}
\] 

(2) $\tilde X$ has $A_1$ singularities along generic points of the singular locus 
$\text{Sing}(\tilde X) = S_1 \cup S_2$.
\end{Lem}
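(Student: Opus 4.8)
The plan is to treat $\tilde X = \{F = 0\} \subset \mathbb{A}^7_{x,y,z,t,u,v,w}$ as a hypersurface and compute $\mathrm{Sing}(\tilde X)$ as its Jacobian scheme. Since $F$ is weighted homogeneous (assign $x,y,z,t,u,v,w$ the weights forced by the monomials of $F$), Euler's relation shows that $F=0$ follows from the vanishing of the seven partials, so $\mathrm{Sing}(\tilde X) = V(\partial_x F,\dots,\partial_w F)$. I would first record these equations:
\[
\begin{gathered}
x = 0,\quad uy + vz = 0,\quad vy + wz = 0,\quad (uw - v^2)t = 0,\\
y^2 + wt^2 = 0,\quad yz - vt^2 = 0,\quad z^2 + ut^2 = 0.
\end{gathered}
\]

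For (1), the decomposition comes from a case split on the fourth equation $(uw-v^2)t=0$. If $t=0$, then $y^2=z^2=0$ force $y=z=0$, which together with $x=0$ is exactly $S_2$. If $t\neq 0$, then $uw=v^2$, and the remaining equations $x=0$, $uy+vz=0$, $y^2+wt^2=0$, $z^2+ut^2=0$ place the point on $S_1$; this gives $\mathrm{Sing}(\tilde X)\subseteq S_1\cup S_2$. The inclusion $S_2\subseteq\mathrm{Sing}(\tilde X)$ is immediate. For $S_1\subseteq\mathrm{Sing}(\tilde X)$ I would work in the chart $u\neq 0$: solving $w=v^2/u$ and $y=-vz/u$, the relation $y^2+wt^2 = (v^2/u^2)(z^2+ut^2)$ shows $S_1\cap\{u\neq 0\}=\{z^2+ut^2=0\}$, and on it the two Jacobian equations not among the generators of the ideal of $S_1$ vanish identically, since $vy+wz = v(-vz/u)+(v^2/u)z = 0$ and $yz-vt^2 = (-v/u)(z^2+ut^2)=0$. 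As $\mathrm{Sing}(\tilde X)$ is closed and $S_1\cap\{u\neq 0\}$ is dense in $S_1$, this yields $S_1\subseteq\mathrm{Sing}(\tilde X)$. The same chart identifies $S_1\cap\{u\neq 0\}$ with the hypersurface $\{z^2+ut^2=0\}\subset\mathbb{A}^4_{z,t,u,v}$, irreducible by Gauss's lemma, while $S_1\cap\{u=0\}=\{x=u=v=z=0,\ y^2+wt^2=0\}$ has dimension $2$; hence $S_1$ is irreducible of dimension $3$, and $S_2\cong\mathbb{A}^3$ is irreducible of dimension $3$. Since neither contains the other, these are exactly the irreducible components.

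For (2) I would use the Hessian $H=\mathrm{Hess}(F)$, a symmetric $7\times 7$ matrix. Differentiating the identity $\nabla F\equiv 0$ along $\mathrm{Sing}(\tilde X)$ gives $T_p\,\mathrm{Sing}(\tilde X)\subseteq\ker H(p)$ at every point, so at a smooth point of either component (tangent space of dimension $3$) one has $\mathrm{rank}\,H\le 4 = \mathrm{codim}_{\tilde X}\mathrm{Sing}+1$. It then suffices to exhibit a single point of each component where $\mathrm{rank}\,H=4$: for $S_2$ evaluate at $x=y=z=t=0$ with $uw-v^2\neq 0$, where $H$ is block diagonal with blocks $(2)$, $2\left(\begin{smallmatrix}u&v\\v&w\end{smallmatrix}\right)$, $(2(uw-v^2))$ and a zero $(u,v,w)$-block, visibly of rank $4$; for $S_1$ evaluate at an explicit point (for instance $x=0$, $y=-i$, $z=i$, $t=u=v=w=1$ with $i^2=-1$) and check $\mathrm{rank}\,H=4$ by row reduction. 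By lower semicontinuity of rank and irreducibility, the generic rank on each component is $4$, and by dimension count $\ker H(p)=T_p\,\mathrm{Sing}(\tilde X)$ for generic $p$. A Hessian of rank $4$ whose kernel is the $3$-dimensional tangent to $\mathrm{Sing}$ means that $F$ restricts to a nondegenerate quadratic form in the $4$ transverse directions, so the transverse slice is a $3$-fold ordinary double point, i.e.\ $\tilde X$ has an $A_1$ singularity along the generic point of each component.

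The main obstacle is the reverse inclusion $S_1\subseteq\mathrm{Sing}(\tilde X)$ in part (1). The five equations defining $S_1$ are not a complete intersection (they satisfy a syzygy, reflected in $\dim S_1=3$ rather than $2$), and the Jacobian equations $vy+wz=0$ and $yz-vt^2=0$ lie not in the ideal generated by the $S_1$-equations but only in its radical. The clean route is the chart computation on $\{u\neq 0\}$ together with the closedness-and-density argument above, which simultaneously yields the irreducibility of $S_1$; verifying that the exceptional stratum $\{u=0\}\cap S_1$ is genuinely lower-dimensional, hence absorbed into the closure, is the step demanding the most care.
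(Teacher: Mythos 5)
Your proposal is correct, and part (1) follows essentially the paper's route: the same Jacobian computation and the same case split on $t(uw-v^2)=0$. In fact you streamline the paper's sub-cases ($t\neq 0, u=0$ and $t\neq 0, w=0$) by observing that once $t\neq 0$ the five defining equations of $S_1$ are literally among the seven partials, so $\mathrm{Sing}(\tilde X)\cap\{t\neq 0\}\subseteq S_1$ is immediate. For part (2) the paper takes a more explicit route: it writes $uF=ux^2+B^2+AC$ with $A=uw-v^2$, $B=uy+vz$, $C=z^2+ut^2$, so that $F$ is (up to the unit $u$) a visibly nondegenerate quadratic form in the four functions $x,B,A,C$ cutting out $S_1$ on the chart $u\neq 0$; and for $S_2$ it just notes that $F$ is a quadratic form in $x,y,z,t$ with discriminant $(uw-v^2)^2$. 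Your Hessian-rank argument is the pointwise shadow of the same idea: it is more systematic (the containment $T_p\,\mathrm{Sing}\subseteq\ker H(p)$ gives the upper bound $\mathrm{rank}\le 4$ for free, so one only needs a single point of each component with rank exactly $4$, and your two sample points do have rank $4$), whereas the paper's identity buys a global normal form with no evaluation at a point. Both are valid proofs of the transverse $A_1$ statement via the splitting lemma.

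One step you flag but do not actually carry out deserves to be completed: the claim that $S_1\cap\{u=0\}$ is ``absorbed into the closure'' of $S_1\cap\{u\neq 0\}$ because it is lower-dimensional. That implication is false in general (a lower-dimensional stratum can be a separate irreducible component), so irreducibility of $S_1$ — which is part of the lemma's assertion — needs an actual argument. It is easily supplied: either repeat your chart computation on $\{w\neq 0\}$, which by symmetry exhibits $S_1\cap\{w\neq 0\}$ as the irreducible hypersurface $\{y^2+wt^2=0\}$ containing $S_1\cap\{u=0,w\neq 0\}=\{v=0\}$ as a proper closed subset, and then check that the remaining stratum $S_1\cap\{u=w=0\}$ (the $t$-axis) is a limit of points with $u\neq 0$; or parametrize $S_1\cap\{u\neq 0\}$ by $(z,t,u,v)$ with $z^2+ut^2=0$ and take explicit limits as $u\to 0$. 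Note that the set-theoretic equality $\mathrm{Sing}(\tilde X)=S_1\cup S_2$ does not depend on this point, since your own description $S_1\cap\{u=0\}=\{x=u=v=z=0,\ y^2+wt^2=0\}$ lets one verify directly that all seven partials vanish there; only the irreducibility of $S_1$ rests on the closure claim.
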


\begin{proof}
(1) The partial derivations of $F$ yield equations of the singular locus
\[
\begin{split}
&x = uy + vz = vy + wz = t(uw - v^2) \\
&= y^2 + wt^2 = yz - vt^2 = z^2 + ut^2 = 0.
\end{split}
\]
If $t = 0$, then $x=y=z=t=0$, thus we have $S_2$.
If $t \ne 0$ and $u = 0$, then $x = z = u = v = y^2+wt^2 = 0$. 
This locus is denoted by $S_3$.
If $t \ne 0$ and $w = 0$, then $x = y = v = w = z^2+ut^2=0$.
This locus is denoted by $S_4$

If $uwt \ne 0$, then $vyz \ne 0$, and $y = - vz/u = -wz/v$.
Hence $x = 0$, $uw = v^2$, $uy+vz=0$, $y^2+wt^2=0$, $z^2+ut^2=0$, 
thus we obtain $S_1$ which contains $S_3,S_4$.

(2) Let $A = uw - v^2$, $B = uy+vz$, $C = z^2+ut^2$.
Then we have
\[
uF = ux^2 + B^2 + AC.
\]
Thus we have a family of $A_1$ along generic points of $S_1$.

Since $F$ is a quadratic form on $x,y,z,t$, $\tilde X$ has also a family of $A_1$ along generic points of $S_2$.
\end{proof}

\begin{Prop}
The versal commutative deformation of $\mathcal{O}_C$ (resp. $\mathcal{O}_{\tilde C}$) is 
along the component $S_1$ (resp. $S_2$).
\end{Prop}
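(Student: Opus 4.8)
The plan is to treat the two cases through the explicit coordinate relations already recorded for the two deformation algebras, and to identify the resulting map from each deformation parameter space into $\tilde X$ with the appropriate component of $\text{Sing}(\tilde X)$.

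For the non-reduced fiber $\mathcal{O}_{\tilde C}$ the assertion is essentially immediate from the proof of Theorem~\ref{univ def alg length 2-2}. There the versal commutative deformation was realized as the flat family $\tilde f_E : E \to D$, whose base $D = \{x=y=z=t=0\}$ carries coordinates $(u,v,w)$ and satisfies $A^2_{\text{def}} = k[[u,v,w]] = \hat{\mathcal{O}}_{D,0}$. Since $D$ coincides on the nose with the component $S_2 = \{x=y=z=t=0\}$ of the lemma computing $\text{Sing}(\tilde X)$, the commutative deformation of $\mathcal{O}_{\tilde C}$ is parametrized by $S_2$, as required.

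For the reduced fiber $\mathcal{O}_C$ I would use the $R$-algebra structure on $A = \text{End}(M \oplus \mathcal{O}_{\tilde Y})$ to obtain a ring homomorphism $R \to A \to A^1_{\text{def}} \to (A^1_{\text{def}})^{ab} = k[[a,b,t]]$, hence a morphism $\varphi : \text{Spec}\, k[[a,b,t]] \to \tilde X$ exhibiting the commutative deformation as a family over its image in the base of the contraction. Abelianizing the relations (\ref{u,v,w}) and (\ref{y}), together with the companion relations $z = -ta$ and $x = -tba - vt$ established in the proof of Theorem~\ref{univ def alg length 2}, shows that $\varphi$ is given by
\[
x \mapsto 0,\quad y \mapsto tb,\quad z \mapsto -ta,\quad u \mapsto -a^2,\quad v \mapsto -ab,\quad w \mapsto -b^2,
\]
with $t$ mapping to the coordinate $t$; note that $x = -tba-vt$ becomes $-tab + abt = 0$ once $a,b,t$ commute. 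Substituting these into the five defining equations of $S_1$ and checking that each vanishes identically — for instance $uw - v^2 \mapsto a^2b^2 - a^2b^2 = 0$, $uy + vz \mapsto -a^2 tb + a^2 bt = 0$, and $y^2 + wt^2, z^2 + ut^2 \mapsto 0$ by the same commutation — shows that $\varphi$ factors through the closed subscheme $S_1 \hookrightarrow \tilde X$.

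It remains to see that $\varphi$ moves along all of $S_1$ rather than a proper subvariety, and that it does not land in $S_2$. Since $t$ maps to the free coordinate $t$, the image is not contained in $S_2 = \{t=0,\dots\}$, which separates the two cases. For dominance I would note that $S_1$ is irreducible of dimension $3$ (both as a component of $\text{Sing}(\tilde X)$ in the lemma and by a direct Jacobian computation of the parametrization above), while $\varphi$ is generically injective: on the locus $t \neq 0$ one recovers $b = y/t$ and $a = -z/t$ from a point of the image. Hence $\varphi$ is a dominant, generically one-to-one morphism onto the $3$-dimensional irreducible $S_1$, so the versal commutative deformation of $\mathcal{O}_C$ is indeed along $S_1$. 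The point I expect to require the most care is the meaning of \emph{along}: because $S_1$ is singular at the origin while the parameter space $\text{Spec}\, k[[a,b,t]]$ is smooth, $\varphi$ cannot be an isomorphism but is a partial resolution — in particular it is two-to-one over $t=0$ via $(a,b,t) \leftrightarrow (-a,-b,t)$ — so the statement must be read as the identification of $S_1$ with the image of the deformation map rather than as an isomorphism of parameter schemes.
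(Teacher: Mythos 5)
Your proposal is correct and follows essentially the same route as the paper: the $\mathcal{O}_{\tilde C}$ case is quoted from Theorem~4.7, and for $\mathcal{O}_C$ one substitutes the abelianized relations $x=0$, $y=tb$, $z=-ta$, $u=-a^2$, $v=-ab$, $w=-b^2$ into the defining equations of $S_1$ and concludes by comparing with $\dim S_1 = 3$. Your added observations (recovering $a,b$ from $y/t,-z/t$ so the map is generically injective, hence dominant onto the irreducible $3$-fold $S_1$) merely flesh out the dimension count that the paper leaves implicit.
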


\begin{proof}
We have the following equalities in the abelianization $(A^1_{\text{def}})^{ab}$, the parameter algebra of 
the versal commutative deformation:
\[
\begin{split}
&x = -tba + \frac 12 t(ab+ba) = \frac 12 t(ab-ba)=0, \\
&uw-v^2 = a^2b^2 - \frac 14 (ab+ba)^2 = - \frac 14 (ab-ba)^2 = 0, \\
&uy + vz = - ta^2b + \frac 12 t(aba + baa) = \frac 12 t(aba - ba^2) = 0, \\
&z^2+ut^2 = t^2a^2-t^2a^2=0.
\end{split}
\]
Since $\dim S_1 = 3$, the deformation of $\mathcal{O}_C$ covers the whole $S_1$.

The statement for the versal deformation of $\mathcal{O}_{\tilde C}$ is already proved 
in Theorem~\ref{univ def alg length 2-2}.
\end{proof}

\begin{Rem}
The genus zero Gopakumar-Vafa invariants $n_j$ for $1 \le j \le l$ defined by Katz \cite{Katz} 
counts the number of rational curves on deformations of a flopping contraction of a $3$-fold.
In our case of the universal flopping contraction, 
these numbers come from the rational curves above the components $S_j$.
Toda \cite{Toda}~Theorem 1.1 proved a formula connecting the intersection multiplicities of $S_j$ and the dimensions of the NC deformation algebras.
\end{Rem}

%%%%%%%%%%%%%%%%%%%
%%%%%%%%%%%%%%%%%%%
%%%%%%%%%%%%%%%%%%%
\section{Example: deformations of Laufer's flopping contraction}

We consider a family of hypersurfaces $X_{\lambda} \subset k^4$ defined by equations: 
\[
F_{\lambda} = 
x^2 + y^3 + \sum_{i=1}^{2n} \lambda_i y^2(-w)^i + z^2w + yw^{2n+1} - \sum_{i=1}^{2n} \lambda_i(-w)^{i+2n+1} 
= 0
\]
where $n$ is a positive integer, $\lambda = (\lambda_i)$, and $\lambda_i \in k = \mathbf{C}$.
They are obtained from the universal flopping contraction of Curto-Morrison (\ref{universal flop}) by
the substitution:
\begin{equation}\label{Laufer substitution}
t=(-w)^n, \quad u = y + \sum_{i=1}^{2n}\lambda_i(-w)^i, \quad v=0.
\end{equation}
The example of Morrison-Pinkham (\cite{P}) is the case where $n = 1$.
Laufer's example (\cite{Laufer}) of a flopping contraction of length $2$ 
is constructed from $X_0$, and $F_0$ is weighted homogeneous with weights
\[
wt(x,y,z,w) = (6n+3, 4n+2, 6n+1, 4).
\]
We assume that the parameters $\lambda_i$ are small in the sense that $\lambda \in U \subset k^{2n}$ for a neighborhood
$U$ of $0$, but Theorem~\ref{stratification} below implies that $\lambda \in k^{2n}$ can be arbitrary indeed.

\begin{Lem}
Let $\tilde f: \tilde Y \to \tilde X$ be the Grassmann blowup in Theorem~\ref{Curto-Morrison Thm}.
Then the morphism $f_{\lambda}: Y_{\lambda} \to X_{\lambda}$ obtained by the pull back 
by a morphism (\ref{Laufer substitution}) is a flopping contraction of length $2$.
\end{Lem}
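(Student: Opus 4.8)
The plan is to realize $f_\lambda$ as the base change of the universal flopping contraction $\tilde f\colon \tilde Y\to\tilde X$ along the morphism $p_\lambda\colon X_\lambda\to\tilde X$ determined by (\ref{Laufer substitution}), and then to verify the defining conditions of a length-$2$ flopping contraction one by one. First I would observe that $p_\lambda$ is the restriction of the closed embedding $k^4\hookrightarrow k^7$, $(x,y,z,w)\mapsto (x,y,z,(-w)^n,\,y+\sum_i\lambda_i(-w)^i,\,0,\,w)$, whose differential is everywhere injective; since $F\circ p_\lambda=F_\lambda$ (the identity that produced (\ref{Laufer substitution})), $p_\lambda$ lands in $\tilde X$ and is a closed embedding of codimension $3$. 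Hence the projection $q\colon Y_\lambda=\tilde Y\times_{\tilde X}X_\lambda\to\tilde Y$ is again a closed embedding, and $Y_\lambda$ is the closed subscheme of the smooth $6$-fold $\tilde Y$ cut out by the three functions $\tilde f^*v$, $\tilde f^*t-(-\tilde f^*w)^n$ and $\tilde f^*u-\tilde f^*y-\sum_i\lambda_i(-\tilde f^*w)^i$. Moreover base change along $\{0\}\hookrightarrow X_\lambda\xrightarrow{p_\lambda}\tilde X$ gives $f_\lambda^{-1}(0)=\tilde f^{-1}(0)=\tilde C$ as schemes, so the scheme-theoretic central fibre has length $2$ at the generic point of the reduced exceptional curve; this settles the length statement once $f_\lambda$ is shown to be a flopping contraction.

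The main step, and the principal obstacle, is to prove that $Y_\lambda$ is smooth. I would work in the two charts $U_1,U_2$ from the proof of Theorem~\ref{normal bundle}, substitute the expressions for $x,y,w$ recorded there into the three defining functions, and apply the Jacobian criterion to the resulting $3\times 6$ matrix. The delicate point is that $t=(-w)^n$ is of high order in $w$, so one must check that this tangency does not drop the rank along $Y_\lambda$, in particular along the reduced exceptional curve $C=\{z=t=u=v=\alpha_{22}=0\}$ in $U_1$ (and its counterpart in $U_2$). A direct computation shows that, after using the rows coming from $v=0$ and $t=(-w)^n$, the reduced third row has entry $1-\lambda_1\alpha_{12}^2$ in the $u$-slot and $\alpha_{12}$ in the $z$-slot; since these never vanish simultaneously, one obtains a nonvanishing $3\times 3$ minor everywhere on $C$, so $Y_\lambda$ is smooth of dimension $3$ along $C$. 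The same criterion away from $C$ matches the smooth locus of $Y_\lambda$ with the locus where $f_\lambda$ is an isomorphism.

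It remains to assemble the global properties. The commutative square shows that $f_\lambda$ is an isomorphism over $X_\lambda\setminus(X_\lambda\cap\mathrm{Sing}(\tilde X))$; computing the intersections of the image of $p_\lambda$ with the components $S_1,S_2$ gives $X_\lambda\cap S_2=\{0\}$ and reduces $X_\lambda\cap S_1$ to a finite set of points (equal to $\{0\}$ for the parameters under consideration), so the projective birational morphism $f_\lambda$ has finite non-isomorphism locus and its exceptional locus is a finite union of rational curves of codimension $2$, the one over $0$ being $C=f_\lambda^{-1}(0)_{\mathrm{red}}$. In particular $\mathrm{Sing}(X_\lambda)$ is finite, so the hypersurface $X_\lambda$ is regular in codimension $1$, hence (being $S_2$) normal, with a general hyperplane section through $0$ of type $D_4$ in accordance with the classification and with length $2$. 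Finally, because $X_\lambda$ is a hypersurface it is Gorenstein, so $K_{X_\lambda}$ is Cartier; as $Y_\lambda$ is smooth and $f_\lambda$ has no exceptional divisor (its exceptional locus being a curve), crepancy is automatic, $K_{Y_\lambda}=f_\lambda^*K_{X_\lambda}$, and in particular $K_{Y_\lambda}\cdot C=0$. Thus $f_\lambda$ is a flopping contraction, and the length-$2$ central fibre $\tilde C$ makes it one of length $2$. The essential work is concentrated in the smoothness verification of the second paragraph; the remaining conditions either follow formally from base change or are immediate consequences of $X_\lambda$ being Gorenstein and $f_\lambda$ being small.
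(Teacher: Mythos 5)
Your proposal takes a genuinely different route from the paper. The paper's proof is soft: it notes that $\tilde f$ is birational with $1$-dimensional fibers over $\mathrm{Sing}(\tilde X)$ and relatively trivial $K_{\tilde Y}$, recalls that $Y_0\to X_0$ is Laufer's flopping contraction (so the image of $X_0$ meets $\mathrm{Sing}(\tilde X)$ only at the origin), and then concludes that for small $\lambda$ the pair $(Y_\lambda, X_\lambda)$ is a small deformation of $(Y_0,X_0)$, hence $Y_\lambda$ is smooth and $X_\lambda$ has an isolated singularity; crepancy and the length are obtained by pull-back and base change exactly as in your last paragraph. You instead verify smoothness directly by realizing $Y_\lambda$ as a codimension-$3$ complete intersection in the smooth $6$-fold $\tilde Y$ and applying the Jacobian criterion in the Curto--Morrison charts; your entries $1-\lambda_1\alpha_{12}^2$ and $\alpha_{12}$ in the reduced third row are correct (and the chart $U_2$ check at $\beta_{12}=0$ goes through the same way, with the $y$-slot entry $-1$ providing the pivot). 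What your approach buys is independence from Laufer's prior result and an explicit, checkable computation along $C$ that does not invoke openness of smoothness in families; what it costs is length, and it still does not remove the dependence on $\lambda$ being small, which enters through the control of $\mathrm{Sing}(X_\lambda)$ away from the exceptional curve.

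That last point is where your write-up has a genuine slip. You deduce ``$\mathrm{Sing}(X_\lambda)$ is finite'' from the finiteness of the non-isomorphism locus of $f_\lambda$, but this is backwards: the singular locus of the hyperplane section $X_\lambda=\tilde X\cap\Lambda$ is not contained in $X_\lambda\cap\mathrm{Sing}(\tilde X)$, since $\Lambda$ can be tangent to $\tilde X$ at a smooth point of $\tilde X$, producing a singular point of $X_\lambda$ over which $f_\lambda$ is an isomorphism --- and there $Y_\lambda$ would be singular, destroying the conclusion. Your Jacobian computation certifies smoothness of $Y_\lambda$ only along $C$; away from $C$ (equivalently, away from $0\in X_\lambda$) you need separately that $F_\lambda$ has no critical points in a punctured neighborhood of the origin. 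This is exactly the step for which the paper uses the deformation argument from $X_0$ (whose singularity is isolated because $F_0$ is weighted homogeneous with finite Milnor number) together with the standing assumption that $\lambda$ lies in a small neighborhood of $0$. To close the gap in your framework, either compute the critical locus of $F_\lambda$ directly (the branch $w=0$ of $\partial F_\lambda/\partial z=2zw=0$ forces the origin; the branch $z=0$, $w\ne 0$ is excluded near $0$ for small $\lambda$ by comparison with $\lambda=0$), or import the paper's semicontinuity argument. With that repaired, the rest of your assembly --- normality of $X_\lambda$ from $R_1+S_2$, crepancy from the Gorenstein hypersurface and the smallness of $f_\lambda$, and the length from $f_\lambda^{-1}(0)=\tilde f^{-1}(0)$ --- is correct.
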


\begin{proof}
By construction, $\tilde f$ is a birational morphism which is an isomorphism above the smooth locus of $\tilde X$, the
fibers of $\tilde f$ above the singular locus are $1$-dimensional, and the canonical divisor 
$K_{\tilde Y}$ is relatively numerically trivial.

When $\lambda = 0$, $Y_0 \to X_0$ is Laufer's flopping contraction.
Thus the image of $X_0$ on $\tilde X$ intersects the singular locus only at $0$.
Therefore the only singularity of $X_{\lambda}$ is isolated at $0$ and 
$Y_{\lambda}$ is smooth because they are small deformations of $Y_0 \to X_0$.
$K_{Y_{\lambda}/X_{\lambda}}$ is relatively numerically trivial because it is the pull-back of $K_{\tilde Y/\tilde X}$.
The scheme theoretic fibers coincide $f_{\lambda}^{-1}(0) = f_0^{-1}(0) = \tilde f^{-1}(0)$ as schemes, hence the length is $2$.
\end{proof}

We note that the exceptional locus of the contraction morphism $Y_{\lambda} \to X_{\lambda}$ is 
always the same curve $C$ inside $\tilde Y$.

The deformations of the hypersurface $X_0$ is parametrized by a quotient ring $U_0 = k[x,y,z,w]/J_0$ 
where $J_0$ is an ideal generated by the partial derivatives:
\[
x, \quad zw, \quad 3y^2 + w^{2n+1}, \quad z^2 + (2n+1)yw^{2n}
\]
because $F_0$ is weighted homogeneous.
Then we have $\mod J_0$:
\[
\begin{split}
&x \equiv zw \equiv 0, \quad z^3 \equiv -(2n+1) yzw^{2n} \equiv 0, \\
&yw^{2n+1} \equiv - \frac 1{2n+1} z^2w \equiv 0, \quad y^3 \equiv - \frac 13 yw^{2n+1} \equiv 0, \\
&w^{4n+2} \equiv 9y^4 \equiv 0, \quad y^2z \equiv - \frac 13 zw^{2n+1} \equiv 0, \\
&yz^2 \equiv - (2n+1) y^2w^{2n} \equiv \frac {2n+1}3 w^{4n+1}, \quad y^2 \equiv - \frac 13 w^{2n+1}.
\end{split}
\]
Thus the deformation space $U_0$ for $f_0$ is generated by the following monomials:
\begin{equation}\label{deformation basis}
\begin{split}
&1, \quad w, \quad \dots, \quad w^{4n+1}, \quad z, \quad z^2 \\
&y, \quad yw, \quad \dots, \quad yw^{2n}, \quad yz. 
\end{split}
\end{equation}
If we discard monomials of degree $\le 2$ and those smaller than a monomial $yw^{2n+1}$,
then the remaining monomials are $w^{2n+2},\dots,w^{4n+1}$, 
which are equivalent to $y^2w,\dots,y^2w^{2n} \mod J_0$.
In this way we obtain our $2n$-dimensional deformation family.

\begin{Thm}\label{stratification}
Define a stratification $\{\Sigma_i\}_{i=0}^{2n}$ of the affine $\lambda$-space $k^{2n}$ 
by $\Sigma_0 = \{0\}$ and
$\Sigma_i = \{\lambda \mid \min \{j \mid \lambda_j \ne 0\} = i\}$ for $i > 0$.
Then $X_{\lambda} \cong X_{\lambda'}$ if $\lambda, \lambda' \in \Sigma_i$ for fixed $i$.
\end{Thm}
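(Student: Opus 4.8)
The plan is to produce, for $\lambda,\lambda'\in\Sigma_i$, an analytic change of coordinates of $(k^4,0)$ together with a unit multiplier carrying $F_\lambda$ to $F_{\lambda'}$, i.e. a contact equivalence, which yields the isomorphism of (germs of) the hypersurfaces $X_\lambda\cong X_{\lambda'}$. The starting point is to record the factored shape of the defining equation coming from the substitution (\ref{Laufer substitution}) into (\ref{universal flop}) with $v=0$, namely
\[
F_\lambda = x^2 + z^2w + (y^2+w^{2n+1})\,u_\lambda,\qquad u_\lambda = y+\sum_{j=1}^{2n}\lambda_j(-w)^j .
\]
For $\lambda\in\Sigma_i$ we have $\lambda_1=\cdots=\lambda_{i-1}=0$ and $\lambda_i\ne0$, so the perturbation factors as $\ell_\lambda(w):=\sum_j\lambda_j(-w)^j=(-w)^i\,v_\lambda(w)$ with $v_\lambda$ a unit satisfying $v_\lambda(0)=\lambda_i$, and likewise for $\lambda'$.

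Next I would seek the coordinate change in the restricted form $\Phi\colon x=\xi(w')x'$, $y=\eta(w')y'$, $z=\zeta(w')z'$, $w=\rho(w')$, where $\xi,\eta,\zeta$ are units, $\rho=\theta\,w'$ is a reparametrization of the $w$-line with $\theta$ a unit, and demand $\kappa\cdot(F_\lambda\circ\Phi)=F_{\lambda'}$ for a unit $\kappa(w')$. Expanding the product as a cubic in $y'$ and matching the coefficients of $x'^2$ and $z'^2w'$ together with those of $y'^3,y'^2,y'^1,y'^0$ reduces every unknown to $\theta$: one gets $\kappa=\eta^{-3}$, $\eta^2=\theta^{2n+1}$, $\xi^2=\eta^3$ and $\zeta^2=\eta^3/\theta$ (all units, so the required square roots exist), the $y'^0$-coefficient being automatically consistent with the $y'^2$-coefficient. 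The whole problem then collapses to the single functional equation extracted from the $y'^2$-coefficient,
\[
v_{\lambda'}(w') = \theta(w')^{\,c}\,v_\lambda\big(\rho(w')\big),\qquad c=i-\tfrac{2n+1}{2},
\]
since $\ell_\lambda(\rho)=(-1)^i\theta^i w'^i v_\lambda(\rho)$ and $\ell_{\lambda'}(w')=(-1)^i w'^i v_{\lambda'}(w')$.

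I would then solve this equation for the unit $\theta$ order by order. At $w'=0$ it reads $\lambda_i'=\theta(0)^{c}\lambda_i$, which is solvable for $\theta(0)\ne0$ precisely because $c\ne0$; the key arithmetic point is that $2c=2i-(2n+1)$ is odd, hence nonzero, for every $1\le i\le 2n$. At each higher order $w'^m$ the coefficient $\theta_m$ enters linearly through the factor $\theta^{c}$ with nonzero coefficient $c\,\theta(0)^{c-1}v_\lambda(0)$ (the argument $\rho=\theta w'$ only feeds $\theta_m$ into order $w'^{m+1}$ of $v_\lambda(\rho)$), so $\theta_m$ is uniquely determined. This gives a formal, and by the analytic implicit function theorem a convergent, solution $\theta$, hence an honest reparametrization $\rho$ and an honest automorphism $\Phi$ of $(k^4,0)$, whose Jacobian at the origin is the diagonal matrix $\mathrm{diag}(\xi(0),\eta(0),\zeta(0),\rho'(0))$ with nonzero entries. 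Thus $F_\lambda$ and $F_{\lambda'}$ are contact equivalent and $X_\lambda\cong X_{\lambda'}$.

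I expect the main obstacle to be exactly the reduction to, and solvability of, the displayed functional equation: one must check that reparametrizing $w$ forces the compensating factors $\xi,\eta,\zeta,\kappa$ to be simultaneously consistent across all three leading terms $x^2$, $z^2w$ and $(y^2+w^{2n+1})u_\lambda$, and that the surviving obstruction is governed by the single nonvanishing exponent $c$. Note that connectedness of $\Sigma_i\cong k^\ast\times k^{2n-i}$ is not needed, since the construction matches any two members of $\Sigma_i$ directly; indeed it shows that the group of admissible coordinate changes acts transitively on $\Sigma_i$, which is the content of the theorem.
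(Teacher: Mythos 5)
Your proof is correct, but it takes a genuinely different route from the paper's. The paper argues infinitesimally: it computes the Tjurina algebra $U_\lambda = k[x,y,z,w]/J_\lambda$, uses the Euler identity coming from the weighted homogeneity of $F_0$ to show that $(y^2-(-w)^{2n+1})\cdot(-w)^i\sum_{j\ge i}(4j-4n-2)\lambda_j(-w)^{j-i}$ lies in $J_\lambda$, inverts the unit factor (nonzero constant term $(4i-4n-2)\lambda_i$ in the Artin local ring $U_\lambda$) to conclude that the Kodaira--Spencer map of the restricted family vanishes along $\Sigma_i$, and then integrates this local triviality over the connected stratum. You instead exhibit the isomorphism directly as a contact equivalence, exploiting the factorization $F_\lambda = x^2+z^2w+(y^2+w^{2n+1})(y+\ell_\lambda(w))$ and reducing a diagonal coordinate change to the single solvable functional equation $v_{\lambda'}=\theta^{\,i-(2n+1)/2}\,v_\lambda(\theta w')$; I checked that the matching of the $x'^2$, $z'^2w'$ and $y'^3,\dots,y'^0$ coefficients is consistent exactly as you claim (the $y'^0$ and $y'^2$ conditions coincide once $\eta^2=\theta^{2n+1}$ is imposed by the $y'^3$ and $y'^1$ conditions), and the order-by-order solvability hinges on $2c=2i-2n-1$ being odd, hence nonzero. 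It is worth noting that this is the same integer $2(2j-2n-1)=4j-4n-2$ that drives the paper's argument, so the two proofs share the same arithmetic engine. Your version buys explicitness (an actual automorphism of the germ $(k^4,0)$), dispenses with the connectedness of $\Sigma_i$ and with the isolated-singularity hypothesis needed to make $U_\lambda$ Artinian, and works for arbitrary (not just small) $\lambda$ without appeal to the corollary; the paper's version is shorter and leans on standard versal deformation theory. The only points you should spell out are that $\theta(0)^c$ for half-integer $c$ means fixing a square root $\eta$ of the unit $\theta^{2n+1}$ (solvable over the algebraically closed field $k$ since $\theta(0)^{2c}=(\lambda_i'/\lambda_i)^2$ with $2c\ne 0$), and that the intended isomorphism $X_\lambda\cong X_{\lambda'}$ is one of germs at the origin, which is all that either proof delivers and all that the later applications require.
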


\begin{proof}
We take a $\lambda \in \Sigma_i$.
The versal deformations of the hypersurface $X_{\lambda}$ is parametrized by a quotient ring 
$U_{\lambda} = k[x,y,z,w]/J_{\lambda}$, where 
$J_{\lambda}$ is an ideal generated by $F_{\lambda}$ and the partial derivatives of $F_{\lambda}$:
\[
\begin{split}
&x, \quad zw, \quad 
3y^2 + \sum_{j=1}^{2n} 2\lambda_j y(-w)^j + w^{2n+1}, \\
&z^2 + (2n+1)yw^{2n} - \sum_{j=1}^{2n} \lambda_j y^2(-w)^{j-1} + \sum_{j=1}^{2n} \lambda_j (-w)^{j+2n}.
\end{split}
\]
We have 
\[
\begin{split}
&(6n+3)x\partial F_{\lambda}/\partial x + (4n+2)y\partial F_{\lambda}/\partial y 
+ (6n+1)z\partial F_{\lambda}/\partial z 
+ 4w\partial F_{\lambda}/\partial w \\
&= (12n+6)F_{\lambda} + \sum_{j=i}^{2n} (4j - 4n - 2) \lambda_j (y^2(-w)^j - (-w)^{j+2n+1}).
\end{split}
\]
Therefore $\sum_{j=i}^{2n} (4j - 4n - 2) \lambda_j (y^2(-w)^j - (-w)^{j+2n+1}) \in J_{\lambda}$ 
with $\lambda_i \ne 0$.
Then it follows that $y^2(-w)^j - (-w)^{j+2n+1} \in J_{\lambda}$ for all $j \ge i$.
Indeed, since $X_{\lambda}$ has an isolated singularity, the ring $U_{\lambda}$ is an Artin local ring.
Then $\sum_{j=i}^{2n} (4j - 4n - 2) \lambda_j (-w)^{j-i}$ becomes invertible in $U_{\lambda}$.

The restricted deformation family $X_{\lambda'}$ for $\lambda' \in \Sigma_i$ induces a Kodaira-Spencer map
$\kappa_{\lambda}: T_{\Sigma_i, \lambda} \to U_{\lambda}$.
Since $T_{\Sigma_i, \lambda}$ is a vector space which has a basis corresponding to monomials $y^2(-w)^j - (-w)^{j+2n+1}$ for $j \ge i$, 
we deduce that $\kappa_{\lambda} = 0$.
Since this is true for any $\lambda \in \Sigma_i$, we conclude that the restricted deformation family is locally trivial.
\end{proof}

\begin{Rem}
We have an alternative proof of the above theorem which uses Theorem~\ref{MY} below.
Indeed the above proof shows that the commutative deformation algebras of hypersurfaces 
for $\lambda \in \Sigma_i$ are independent of the coefficients $\lambda_j$ for $j > i$. 
\end{Rem}

\begin{Cor}
There are only $2n+1$ isomorphism classes in the deformation family $X_{\lambda}$.
They are isomorphic to $X_0$ defined by $F_0 = 0$ or $X_i$ defined by  
\[
F_i =  x^2 + y^3 + y^2(-w)^i + z^2w + yw^{2n+1} - (-w)^{i+2n+1} = 0
\]
for $1 \le i \le 2n$.
\end{Cor}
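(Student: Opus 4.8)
The plan is to deduce the Corollary directly from Theorem~\ref{stratification}, which already contains all the substance; what remains is bookkeeping about the strata together with the identification of an explicit representative in each one.

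First I would observe that $\{\Sigma_i\}_{i=0}^{2n}$ is genuinely a partition of the parameter space $k^{2n}$. Indeed, every nonzero $\lambda$ has a well-defined smallest index $i = \min\{j \mid \lambda_j \ne 0\}$, which takes values in $\{1,\dots,2n\}$, so $\lambda \in \Sigma_i$ for exactly one such $i$; and $\Sigma_0 = \{0\}$ accounts for the single remaining point. Hence $k^{2n}$ is covered by the $2n+1$ pairwise disjoint strata $\Sigma_0,\Sigma_1,\dots,\Sigma_{2n}$.

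Next I would invoke Theorem~\ref{stratification}: for each fixed $i$, all of the hypersurfaces $X_\lambda$ with $\lambda \in \Sigma_i$ are mutually isomorphic. Consequently the number of isomorphism classes in the family $\{X_\lambda\}_{\lambda \in k^{2n}}$ is bounded by the number of strata, namely $2n+1$.

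Finally I would pin down an explicit representative of each stratum. The origin gives $X_0$. For $i \ge 1$, the point $e_i \in k^{2n}$ with $i$-th coordinate equal to $1$ and all other coordinates $0$ satisfies $\min\{j \mid (e_i)_j \ne 0\} = i$, so $e_i \in \Sigma_i$; substituting $\lambda = e_i$ into $F_\lambda$ collapses both sums to their single $j = i$ terms and yields exactly $F_i$. Therefore any $X_\lambda$ with $\lambda \in \Sigma_i$ is isomorphic to $X_{e_i} = X_i$, and every member of the family is isomorphic to one of $X_0,X_1,\dots,X_{2n}$. No real obstacle arises, since the only nontrivial step, namely the isomorphism between hypersurfaces within a common stratum, is precisely the content of Theorem~\ref{stratification}; the verification that $F_{e_i} = F_i$ is a one-line substitution, and the distinctness of the $2n+1$ classes is addressed separately in Proposition~\ref{non-isomorphism1} and Theorem~\ref{non-isomorphism2}.
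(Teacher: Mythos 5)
Your argument is correct and is essentially the paper's (the Corollary is stated there without proof, as an immediate consequence of Theorem~\ref{stratification}): the strata $\Sigma_0,\dots,\Sigma_{2n}$ partition $k^{2n}$, the theorem gives isomorphism within each stratum, and the standard basis vector $e_i$ is a representative of $\Sigma_i$ with $F_{e_i}=F_i$. Nothing further is needed.
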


We calculate the NC deformation algebra of $\mathcal{O}_C$ on $Y_{\lambda}$:

\begin{Thm}\label{Laufer def}
The NC deformation algebra of $\mathcal{O}_C$ on $Y_{\lambda}$ is given by
\[
k\langle \langle a,b \rangle \rangle/(ab+ba,a^2+b^{2n+1}+\sum_{i=1}^{2n} \lambda_ib^{2i}).
\]
\end{Thm}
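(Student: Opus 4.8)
The plan is to reduce everything to Theorem~\ref{univ def alg length 2} by regarding $f_\lambda : Y_\lambda \to X_\lambda$ as the base change of the Curto--Morrison universal contraction $\tilde f : \tilde Y \to \tilde X$ along the substitution \eqref{Laufer substitution}. Because $Y_\lambda \cong \tilde Y \times_{\tilde X} X_\lambda$, the tilting generator $M \oplus \mathcal{O}_{\tilde Y}$, its matrix factorization $(\Phi,\Psi)$, and the four generating endomorphisms $a,b,c,d$ all pull back to $Y_\lambda$. The first step is therefore to observe that the matrix identities proved in Theorem~\ref{univ def alg length 2} --- the relations \eqref{u,v,w} $a^2=-u$, $b^2=-w$, $ab+ba=-2v$, together with $y=tb$, $z=-ta$, $x=-tba-vt$ --- remain valid as endomorphisms of the pulled-back module, now with $t,u,v$ read off from \eqref{Laufer substitution}. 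Applying Theorem~\ref{partial} to $f_\lambda$ then shows that $A^1_{\text{def}}=A_\lambda/I^1_\lambda$ is generated, over the image of $\mathcal{O}_{X_\lambda}$, by $a$ and $b$.

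Next I would substitute $v=0$, $t=(-w)^n$ and $u=y+\sum_{i=1}^{2n}\lambda_i(-w)^i$ and use $w=-b^2$. This immediately gives $ab+ba=-2v=0$ and $t=(-w)^n=b^{2n}$, hence $y=tb=b^{2n+1}$ and $(-w)^i=b^{2i}$. Combining $a^2=-u$ with $u=y+\sum_i\lambda_i(-w)^i$ produces
\[
a^2 + b^{2n+1} + \sum_{i=1}^{2n}\lambda_i b^{2i} = 0 .
\]
Since $x,y,z,w$ are now all expressed through $a$ and $b$, the algebra is generated over $k$ by $a,b$ alone, and there is a surjection from $B:=k\langle\langle a,b\rangle\rangle/\bigl(ab+ba,\ a^2+b^{2n+1}+\sum_i\lambda_i b^{2i}\bigr)$ onto it.

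It then remains to see that this surjection is an isomorphism, i.e. that the two relations generate the whole relation ideal $I\subset k\langle\langle a,b\rangle\rangle$. I would settle this by a leading-term count. The length-$2$ condition forces $N_{C/Y_\lambda}\cong\mathcal{O}_C(1)\oplus\mathcal{O}_C(-3)$, so the local-to-global spectral sequence gives $\dim\text{Ext}^1(\mathcal{O}_C,\mathcal{O}_C)=\dim H^0(N_{C/Y_\lambda})=2$ and $\dim\text{Ext}^2(\mathcal{O}_C,\mathcal{O}_C)=\dim H^1(N_{C/Y_\lambda})=2$; hence $I$ has exactly two minimal generators, so that $\dim_k I/(\mathfrak m I+I\mathfrak m)=2$. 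Since the normal bundle has mixed signs, $m_2$ is symmetric by Theorem~\ref{symmetric}, and the quadratic leading terms $ab+ba$ and $a^2+\lambda_1 b^2$ (the latter being $a^2$ when $\lambda_1=0$) of our two relations are linearly independent in $\text{Sym}^2\text{Ext}^1(\mathcal{O}_C,\mathcal{O}_C)^*$. As $\mathfrak m I+I\mathfrak m\subset\mathfrak m^3$, their images span $I/(\mathfrak m I+I\mathfrak m)$, so by the completed Nakayama lemma the two relations generate $I$, giving $B\cong A^1_{\text{def}}$.

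The main obstacle is the base-change step: one must check carefully that applying Theorem~\ref{partial} to the pulled-back data genuinely yields an algebra generated by $a,b$ with exactly these relations, and that no further relations are hidden. As a concrete safeguard (and an alternative to the Ext-count argument above) I would verify directly that, once $t=b^{2n}$ and $ab=-ba$ are imposed, the three universal relations $tab-tba$, $ab^2-b^2a$, $a^2b-ba^2$ become consequences of the two Laufer relations: comparing $a\cdot a^2$ with $a^2\cdot a$ using $a^2=-(b^{2n+1}+\sum_i\lambda_i b^{2i})$ forces $b^{2n+1}a=0$, which renders $tab=tba$ automatic, and the remaining two follow directly from $ab=-ba$ and the second relation. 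This confirms that the relation set is complete and that the reduction from Theorem~\ref{univ def alg length 2} loses nothing.
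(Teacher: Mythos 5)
Your proposal is correct and follows essentially the same route as the paper: it specializes the universal length-$2$ algebra of Theorem~\ref{univ def alg length 2} via the substitution (\ref{Laufer substitution}) using the identities (\ref{u,v,w}) and (\ref{y}), and your ``safeguard'' computation (deriving $b^{2n+1}a=0$ from $a\cdot a^2=a^2\cdot a$ and then $tab=tba=0$, with $ab^2-b^2a$ and $a^2b-ba^2$ following from $ab=-ba$) is exactly the paper's verification that no relations are lost. Your additional $\mathrm{Ext}$-dimension/leading-term count is not part of the paper's proof but matches the data recorded in the remark following the theorem ($\dim\mathrm{Ext}^1=\dim\mathrm{Ext}^2=2$ with quadratic terms $ab+ba$ and $a^2$), so it serves as a valid independent confirmation.
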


\begin{proof}
We add relations (\ref{Laufer substitution}) with the help of (\ref{u,v,w}) and (\ref{y}).
Then we have
\[
t=b^{2n}, \quad -a^2 = tb + \sum_{i=1}^{2n}\lambda_ib^{2i}, \quad ab+ba=0
\]
hence 
\[
a^2+b^{2n+1}+\sum_{i=1}^{2n} \lambda_ib^{2i}=0.
\]
We check that the relations of Theorem~\ref{univ def alg length 2} follow from these equations.
\[
\begin{split}
&ab^{2n+1}=-b^{2n+1}a, \\
&a(b^{2n+1}+\sum_{i=1}^{2n} \lambda_ib^{2i}) = (b^{2n+1}+\sum_{i=1}^{2n} \lambda_ib^{2i})a,
\end{split}
\]
hence 
\begin{equation}\label{ab2n+1}
0 = ab^{2n+1} = tab = tba. 
\end{equation}
\end{proof}

\begin{Rem}
(1) The normal bundle of $C$ is given by $N_{C/Y_{\lambda}} \cong \mathcal{O}_C(1) \oplus \mathcal{O}_C(-3)$.
Hence $\dim \text{Ext}^1(\mathcal{O}_C,\mathcal{O}_C) = \dim H^0(N_{C/Y_{\lambda}}) = 2$ and 
$\dim \text{Ext}^2(\mathcal{O}_C,\mathcal{O}_C) = \dim H^1(N_{C/Y_{\lambda}}) = 2$. 
Thus the cotangent space of the NC deformation algebra is generated by $a,b$ and there are $2$ relations.
The quadratic terms of the relations are anti-symmetric: $ab+ba, a^2$. 

(2) When $\lambda_i = 0$ for all $i$, then the equation is weighted homogeneous with weights
$wt(a,b) = (2n+1,2)$.
Thus the isomorphism type of the NC deformation algebra does not change under the substitution
\[
\lambda_i \mapsto \alpha^{2i - 2n + 1} \lambda_i
\]
for $\alpha \in k$.

(3) Since $\dim Y_{\lambda} = 3$ and $(K_{Y_{\lambda}},C) = 0$, there is a {\em superpotential} $W$ (\cite{VdBergh-CY}).
It is expressed by using non-commutative variables $a,b,c,d,w$ which are generators of 
$\text{End}(R \oplus M)$, where $a,b \in \text{Hom}(M,M)$,
$c \in \text{Hom}(M,R)$, $d \in \text{Hom}(R,M)$ and $w \in \text{Hom}(R,R)$. 
It seems to be given by 
\[
W = \frac 12 dcdc + b^2dc + a^2b + dwc - \frac{(-w)^{n+1}}{n + 1} + \frac{b^{2n+2}}{2n + 2}
+ \sum \lambda_{i=1}^{2n} \frac{b^{2i+1}}{2i+1}.
\]
By cyclically differentiating $W$, we then obtain the relations among the variables: 
\[
\begin{split}
&ab+ba = 0, \\
&a^2+bdc+dcb+b^{2n+1}+\sum \lambda_ib^{2i} = 0, \\
&c(b^2+dc) = (b^2+dc)d = 0, \\
&cd + (-w)^n = 0.
\end{split}
\]
By putting $c=d=w=0$, we obtain the relations of the NC deformation algebra.

(4) As already remarked in \cite{Brown-Wemyss}, non-commutative associative algebras 
may be non-isomorphic even if
their abelianizations are isomorphic.
It may even happen that one is finite dimensional and the other is infinite dimensional.
 
We know that $k[[a,b]]/(a^2+b^2+b^3) \cong k[[a,b]]/(a^2+b^2)$, because $1+b$ is invertible.
But $k\langle \langle a,b \rangle \rangle/(ab+ba,a^2+b^2+b^3)$ is finite dimensional ($9$-dimensional),
and $k\langle \langle a,b \rangle \rangle/(ab+ba,a^2+b^2)$ is infinite dimensional.
Indeed we have an injective homomorphism
\[
\begin{split}
&k[[a^2]] = k[[a^2,b^2 ]]/(a^2+b^2)
= k\langle \langle a^2,b^2 \rangle \rangle/(a^2b^2-b^2a^2,a^2+b^2) \\
&\to k\langle \langle a,b \rangle \rangle/(ab+ba,a^2+b^2).
\end{split}
\]
\end{Rem}

We consider the following conjecture of Donovan and Wemyss: 

\begin{Conj}[\cite{Donovan-Wemyss}~Conjecture 1.4]\label{dw-conj}
Let $f_i: Y_i \to X_i$ ($i=1,2$) be flopping contractions of smooth $3$-folds whose exceptional loci are 
irreducible smooth rational curves $C_i$, and let $A_i$ be the NC deformation algebras of 
$\mathcal{O}_{C_i}$ on $Y_i$.
Then the completions of $X_i$ at the singular points $f_i(C_i)$ are isomorphic if and only if $A_i$ are
isomorphic.
\end{Conj}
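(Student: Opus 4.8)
The plan is to reduce the general biconditional to a reconstruction problem for complete local compound Du Val (cDV) singularities, and then treat the two implications separately. Since each $f_i \colon Y_i \to X_i$ is a flopping contraction of a smooth $3$-fold with irreducible exceptional rational curve $C_i$, the germ $R_i := \widehat{\mathcal O}_{X_i, f_i(C_i)}$ is an isolated cDV singularity admitting a small resolution. Over such $R_i$ one has a tilting bundle $P_i = \mathcal O_{Y_i} \oplus M_i$ of the kind used throughout this paper, the endomorphism algebra $\Lambda_i = \operatorname{End}(P_i)$ (a noncommutative crepant resolution of $R_i$), and, by Theorem~\ref{partial} applied with $J$ equal to the vertex of $M_i$, an identification $A_i \cong \Lambda_i/\Lambda_i e_i \Lambda_i$, where $e_i$ is the idempotent of the $\mathcal O_{Y_i}$-summand. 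Thus $A_i$ is exactly the contraction algebra, and the conjecture becomes: $R_1 \cong R_2$ if and only if $\Lambda_1/(e_1) \cong \Lambda_2/(e_2)$.

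For the easy direction, $R_1 \cong R_2 \Rightarrow A_1 \cong A_2$, the strategy is to argue that the whole package $(\Lambda_i, e_i)$, hence its quotient $A_i$, is intrinsic to the complete local ring $R_i$. Here $M_i$ is a basic maximal Cohen--Macaulay generator of the (essentially unique, up to Morita equivalence) NCCR of $R_i$, so an isomorphism $R_1 \cong R_2$ transports $M_1$ to $M_2$ and induces an algebra isomorphism $\Lambda_1 \cong \Lambda_2$ carrying $e_1$ to $e_2$, whence $A_1 \cong A_2$. One point must be checked to make this well posed: for an irreducible flopping curve the two small resolutions of a fixed cDV germ are related by the flop, and one needs that their contraction algebras are isomorphic (not merely derived equivalent), so that $A_i$ depends only on $R_i$. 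I expect this step to be essentially formal once the dictionary of Section~2 is in place; all the content lies in the converse.

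The hard direction, $A_1 \cong A_2 \Rightarrow R_1 \cong R_2$, is a reconstruction theorem and is the main obstacle. The strategy is to recover $R_i$ from $A_i$ through the superpotential. The formal neighbourhood of $C_i$ carries a $3$-Calabi--Yau structure, so $\Lambda_i = \operatorname{Jac}(Q_i, W_i)$ is the Jacobi algebra of a quiver with potential and $R_i = Z(\Lambda_i)$ is its centre; the contraction algebra $A_i$ inherits a cyclic $A^{\infty}$-structure whose higher products are the $m_n$ of Section~3 and whose primitive is a reduced potential $W_i^{\mathrm{con}}$, in the spirit of Remark~(3) following Theorem~\ref{Laufer def}. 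A reconstruction of Hua--Keller type then asserts that this cyclic $A^{\infty}$-algebra, equivalently the reduced potential up to right-equivalence, determines $\Lambda_i$ and hence $R_i$. The passages ``cyclic $A^{\infty}$-isomorphism $\Rightarrow$ $\Lambda$-isomorphism $\Rightarrow$ centre-isomorphism'' should be comparatively formal.

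The genuinely difficult step, and the one I expect to be the crux, is to promote a bare algebra isomorphism $A_1 \cong A_2$ to an isomorphism of the enriched structure: one must show that the minimal cyclic $A^{\infty}$-model is rigid, i.e.\ determined up to the relevant homotopy by the underlying finite-dimensional associative algebra together with its canonical symmetric (Frobenius) pairing coming from Serre duality. This rigidity should be controlled by a finiteness or vanishing statement for suitable negative cyclic and Hochschild cohomology of $A_i$, in the spirit of August's finiteness results for derived equivalence classes of stable endomorphism rings. As a guide for establishing the required vanishing, and as a consistency check, I would test the program on the explicit families computed in Theorems~\ref{univ def alg length 2}, \ref{Laufer def}, and \ref{higher length} (the last via Karmazyn's classification), verifying in each length stratum that the abelianization together with the low-order products $m_2, m_3$ already pins down the potential, and hence the singularity, up to isomorphism.
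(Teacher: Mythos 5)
The first thing to be clear about is that the statement you set out to prove is labelled a \emph{conjecture} in the paper, and the paper does not prove it in any generality: it only \emph{confirms} Conjecture~\ref{dw-conj} for one explicit family, the deformations of Laufer's flops, and it does so by direct computation rather than by a structural argument. Specifically, Theorem~\ref{stratification} shows that the family $X_\lambda$ has exactly $2n+1$ isomorphism types; Theorem~\ref{Laufer def} computes the corresponding algebras $A_0,\dots,A_{2n}$ explicitly; Proposition~\ref{non-isomorphism1} distinguishes $A_0^{ab},A_1^{ab},\dots,A_n^{ab}$ by dimension counts of the abelianizations; and Theorem~\ref{non-isomorphism2} handles the hard remaining cases $A_0,A_{n+1},\dots,A_{2n}$ --- whose abelianizations are all isomorphic --- by a delicate monomial analysis: assuming an isomorphism $f$ exists, one tracks which monomials can occur in $f(a)$ and $f(b)$ and derives a contradiction from the relation $f(a)^2+f(b)^{2n+1}+f(b)^{2n+2i}=0$. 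No superpotential or $A^\infty$ reconstruction machinery enters anywhere in the paper's verification.

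Your proposal, which attempts the conjecture in full generality, has a genuine gap at exactly the step you yourself call the crux. The reduction of the hard direction to the rigidity statement --- that the bare associative algebra $A_i$ determines its cyclic $A^\infty$-structure, equivalently the reduced potential up to right-equivalence --- is a correct identification of what is needed, and the ensuing chain (cyclic $A^\infty$-isomorphism $\Rightarrow$ isomorphism of $\Lambda_i$ $\Rightarrow$ isomorphism of the centres $R_i$) is indeed the comparatively formal part, of Hua--Keller type. But the rigidity statement itself \emph{is} the open mathematical content of the conjecture, and your proposal does not prove it: it asserts that it ``should be controlled by a finiteness or vanishing statement'' for negative cyclic and Hochschild cohomology, and proposes to test the program on the paper's examples. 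A plan plus consistency checks is not a proof. The same pattern occurs in your ``easy'' direction: the one non-trivial point there --- that the contraction algebras of the two small resolutions of a fixed germ, related by the flop, are isomorphic as algebras and not merely derived equivalent --- is declared ``essentially formal'' but never established. As written, both implications of the biconditional remain unproven, so the proposal is a research program whose unresolved core coincides with the conjecture itself; it cannot be matched against the paper, because the paper claims only the special-case verification described above.
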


Conjecture~\ref{dw-conj} has some partial positive answers in \cite{Hua-Toda} and \cite{Hua}.
Conjecture~\ref{dw-conj} seems more reasonable than it appears because it can be regarded
as a non-commutative generalization of the the following theorem:

\begin{Thm}[\cite{MY}]\label{MY}
Let $(X,0),(X',0) \subset \mathbf{C}^{n+1}$ be germs of hypersurfaces with isolated singularities at the  
origin defined by equations $f,f'$.
Then they are isomorphic (i.e., biholomorphically equivalent) if and only if 
their (commutative) deformation algebras are isomorphic:
\[
\mathcal{O}_{\mathbf{C}^{n+1},0}/(f,\frac{\partial f}{\partial z_0}, \dots, \frac{\partial f}{\partial z_n})
\cong \mathcal{O}_{\mathbf{C}^{n+1},0}/(f',\frac{\partial f'}{\partial z_0}, \dots, \frac{\partial f'}{\partial z_n})
\]
as $\mathbf{C}$-algebras, where $z_0,\dots,z_n$ are local coordinates on $\mathbf{C}^{n+1}$
at the origin.
\end{Thm}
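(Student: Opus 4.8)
The statement is the Mather--Yau theorem, and the plan is to treat its two implications separately: the forward one by a direct chain-rule computation, and the converse by an integration-of-vector-fields argument.

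First I would dispose of the direction ``$(X,0)\cong(X',0)$ $\Rightarrow$ isomorphic deformation algebras''. Writing $\mathcal O = \mathcal O_{\mathbf C^{n+1},0}$ and noting that for a reduced hypersurface $f$ generates the (principal, radical) ideal of $X$, a biholomorphism $\phi$ of germs with $\phi(X')=X$ satisfies $\phi^*f = uf'$ for a unit $u \in \mathcal O$. Differentiating by the chain rule, $\partial_j(\phi^*f) = \sum_i (\phi^*\partial_i f)\,\partial_j\phi_i$, and since the Jacobian matrix $(\partial_j\phi_i)$ is invertible the ideals $(\phi^*\partial_i f)$ and $(\partial_j(\phi^*f)) = (\partial_j(uf'))$ coincide; modulo $(f')$ the latter equals $(\partial_j f')$ because $u$ is a unit. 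Hence $\phi^*$ carries the Tjurina ideal $(f,\partial_0 f,\dots,\partial_n f)$ onto $(f',\partial_0 f',\dots,\partial_n f')$ and induces the asserted isomorphism of deformation algebras.

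The converse is the substance, and the key reduction is the \emph{family version}: if $\{f_t\}$ is an analytic family of isolated singularities with $\dim_{\mathbf C}$ of the Tjurina algebra constant in $t$, then all $f_t$ are contact equivalent. To prove this, constancy of the fiber dimension makes the relative Tjurina algebra flat over the parameter (local criterion of flatness together with Nakayama), which forces $\partial_t f_t \in (f_t) + \mathfrak m\cdot(\partial_{z_0}f_t,\dots,\partial_{z_n}f_t)$; I would then solve the infinitesimal equation $\partial_t f_t = g\,f_t + \sum_i \xi_i\,\partial_{z_i}f_t$ with $\xi_i \in \mathfrak m$ and $g \in \mathcal O$ depending analytically on $t$ (the analyticity, rather than mere formality, of the solution being extracted from coherence of the relative Tjurina module together with a Weierstrass-type division), and integrate the resulting vector field $\partial_t + \sum_i \xi_i\partial_{z_i}$ together with the scaling $g$ to produce contact equivalences with $u_t(f_t\circ\phi_t)=f_0$.

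Granting the family lemma, I would finish as follows. By finite determinacy of isolated singularities both $f$ and $f'$ may be taken to be polynomials, and the splitting (generalized Morse) lemma separates off the nondegenerate quadratic part, whose rank and whose residual corank are invariants visible in the Tjurina algebra and hence agree for $f$ and $f'$; this reduces matters to the residual part lying in $\mathfrak m^3$, where the embedding dimension of the Tjurina algebra equals the number of variables, so that the given algebra isomorphism lifts to an ambient coordinate change. After applying this change to $f'$ I would arrange that $f$ and $f'$ present the same Tjurina algebra, whence the linear path $f_t = (1-t)f + tf'$ has constant Tjurina algebra, and the family lemma yields $f \sim_{\mathcal K} f'$, i.e. $(X,0)\cong(X',0)$. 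I expect the main obstacle to be twofold: securing convergent (analytic) rather than merely formal solutions of the cohomological equation in the family lemma, and justifying that the interpolating path genuinely has constant Tjurina algebra after the lift --- this is precisely where the careful use of the lifted isomorphism and of finite determinacy is essential.
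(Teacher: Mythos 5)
The paper does not prove this statement; it is quoted verbatim from Mather--Yau \cite{MY} as a known result, so there is no in-paper argument to compare yours against. Judged on its own, your forward implication (chain rule plus $\phi^*f = uf'$, invertibility of the Jacobian, reduction modulo $(f')$) is correct and standard, as is the lifting step that uses the splitting lemma to reduce to $f\in\mathfrak m^3$ so that an algebra isomorphism of Tjurina algebras lifts to an ambient coordinate change.

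The converse, however, rests on a family lemma that is false as you state it: constancy of $\dim_{\mathbf C}$ of the Tjurina algebra along an analytic family does \emph{not} imply contact triviality. The Hesse pencil $f_t = x^3+y^3+z^3+3txyz$ has $\tau(f_t)=\mu(f_t)=8$ for all $t$ with smooth associated cubic, yet the germs are pairwise non-isomorphic because the $j$-invariant of the projectivized tangent cone varies; correspondingly $\partial_t f_t = 3xyz$ does \emph{not} lie in $(f_t)+\mathfrak m J(f_t)$, so the step where you deduce this membership from flatness of the relative Tjurina module is exactly where the argument breaks (flatness gives no such containment). What the actual proof needs is constancy of the Tjurina \emph{ideal}, not of its codimension: after arranging $(f)+J(f)=(f')+J(f')=:I$, one has $I_{f_t}\subseteq I$ along $f_t=(1-t)f+tf'$, and by upper semicontinuity of $\tau$ equality fails only at finitely many $t\in\mathbf C$; one must then join $0$ to $1$ by a path in $\mathbf C$ avoiding these points (the theorem is false over $\mathbf R$ precisely because this detour is unavailable), and even then one needs a separate argument to upgrade $f'-f\in (f_t)+J(f_t)$ to membership in $(f_t)+\mathfrak m J(f_t)$ so that the integrating vector field fixes the origin --- this is the genuine content distinguishing the Tjurina-algebra version from the easier $\mathcal O/((f)+\mathfrak m J(f))$ version in \cite{MY}. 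Your final sentence also asserts without justification that the linear path ``has constant Tjurina algebra''; as the example shows, that is the whole difficulty, not a consequence of the setup.
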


We define
\[
\begin{split}
&A_0 = k\langle \langle a,b \rangle \rangle/(ab+ba,a^2+b^{2n+1}) \\
&A_i = k\langle \langle a,b \rangle \rangle/(ab+ba,a^2+b^{2n+1}+b^{2i})
\end{split}
\]
for $1 \le i \le 2n$. 
We denote by $A_i^{ab} = A_i/([A_i,A_i])$ their abelianizations.

We confirm Conjecture~\ref{dw-conj} for deformations of Laufer's flops
in Proposition~\ref{non-isomorphism1} and Theorem~\ref{non-isomorphism2}.
This is a generalization of \cite{Brown-Wemyss}~Theorem 4.7.

\begin{Prop}\label{non-isomorphism1}
$A_0^{ab},A_1^{ab},\dots,A_n^{ab}$ are not isomorphic to each other, while 
$A_0^{ab}, A_{n+1}^{ab}, \dots,A_{2n}^{ab}$ are isomorphic. 
\end{Prop}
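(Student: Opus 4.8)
The plan is to distinguish the abelianizations $A_i^{ab}$ as commutative $k$-algebras by computing them explicitly and then reading off an isomorphism invariant. First I would compute $A_i^{ab}$. Since the abelianization forces $ab=ba$, the relation $ab+ba=0$ becomes $2ab=0$, i.e.\ $ab=0$ (as $\text{char}(k)=0$). So
\[
A_i^{ab} = k[[a,b]]/(ab,\ a^2+b^{2n+1}+b^{2i})
\]
for $1\le i\le 2n$, and $A_0^{ab}=k[[a,b]]/(ab,\ a^2+b^{2n+1})$. The key observation is that $ab=0$ splits the analysis along the two branches $b=0$ and $a=0$: on the branch $b=0$ the second relation reads $a^2=0$ (for every $i$), so this branch contributes a nilpotent piece of bounded, $i$-independent size; the interesting invariant must come from the $a=0$ branch, where the relation becomes $b^{2n+1}+b^{2i}=0$, i.e.\ $b^{2i}(1+b^{2n+1-2i})=0$. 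Here the factor $1+b^{2n+1-2i}$ is a unit in $k[[b]]$ precisely when $2n+1-2i>0$, i.e.\ when $i\le n$, whereas for $i\ge n+1$ the exponent $2n+1-2i$ is negative and one must instead factor out $b^{2n+1}$, rewriting $b^{2n+1}+b^{2i}=b^{2n+1}(1+b^{2i-(2n+1)})$ with $1+b^{2i-(2n+1)}$ a unit.

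This immediately explains the stated dichotomy. For $1\le i\le n$ the $a=0$ branch is $k[[b]]/(b^{2i})$ (after absorbing the unit), giving a nilpotent order $2i$ that strictly increases with $i$; together with $A_0^{ab}$, whose $a=0$ branch is $k[[b]]/(b^{2n+1})$, these all have distinct $k$-dimensions (or distinct socle/nilpotency data), so $A_0^{ab},A_1^{ab},\dots,A_n^{ab}$ are pairwise non-isomorphic. For $n+1\le i\le 2n$, after absorbing the unit the $a=0$ branch is $k[[b]]/(b^{2n+1})$, which is exactly the $a=0$ branch of $A_0^{ab}$; since the $b=0$ branch ($a^2=0$) is the same for all indices, I expect each $A_i^{ab}$ with $i\ge n+1$ to be isomorphic to $A_0^{ab}$. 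Concretely I would exhibit an explicit change of the local coordinate $b$ (substituting $b\mapsto b\cdot(\text{unit})^{1/(2n+1)}$, using that a unit power series in characteristic $0$ has a formal $(2n+1)$-st root) carrying the relation $a^2+b^{2n+1}+b^{2i}$ to $a^2+b^{2n+1}$ while preserving $ab=0$, thereby realizing the isomorphism $A_i^{ab}\cong A_0^{ab}$.

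To make the non-isomorphism assertion rigorous I would extract a coordinate-free invariant rather than rely on the explicit branch description. The cleanest choice is the Hilbert function of the local algebra, or equivalently $\dim_k A_i^{ab}$ itself: summing the contributions of the two branches glued along the common point, $\dim_k A_i^{ab}$ is finite and is a strictly monotone function of $i$ for $0\le i\le n$, which certifies that no two of $A_0^{ab},\dots,A_n^{ab}$ are isomorphic. For the isomorphic family I would verify the explicit substitution sends maximal ideal to maximal ideal and is invertible as a continuous $k$-algebra homomorphism of complete local rings. The main obstacle I anticipate is purely bookkeeping: correctly handling the gluing of the two branches $\{ab=0\}=\{a=0\}\cup\{b=0\}$ so that the dimension count (or Hilbert function) is computed without double-counting the intersection, and confirming that the unit-absorbing substitution in the $i\ge n+1$ case genuinely defines an automorphism of $k[[a,b]]$ compatible with the relation $ab=0$. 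Neither step is deep, but the index ranges and the parity of exponents must be tracked carefully to land exactly on the split at $i=n$ versus $i=n+1$.
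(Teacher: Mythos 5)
Your approach is essentially the paper's: abelianize so that $ab+ba=0$ becomes $ab=0$ in characteristic $0$, absorb the unit by a formal coordinate change in $b$ to normalize the second relation to $a^2+b^{2i}$ for $1\le i\le n$ and to $a^2+b^{2n+1}$ for $i=0$ and $n+1\le i\le 2n$, and then distinguish the resulting local algebras by their $k$-dimension, which is exactly how the paper proceeds (it records $\dim A_0^{ab}=2n+3$ and $\dim A_i^{ab}=2i+2$ for $1\le i\le n$). Two small corrections to your write-up: the dimension is \emph{not} monotone on $0\le i\le n$ (it is $2n+3$ at $i=0$ and then $4,6,\dots,2n+2$), though it is injective there, which is all you need; and the branch-gluing count is off by one, because $k[[a,b]]/(ab,\,a^2+b^m)$ is not the fibre product of its two branches --- the element $a^2=-b^m$ lies in $(a)\cap(b)$ --- so one should instead use the monomial basis $1,a,b,\dots,b^m$, giving dimension $m+2$ (still injective in $m$, so the conclusion stands).
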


\begin{proof}
We have 
\[
A_i^{ab} = \begin{cases} 
k[[a,b]]/(ab,a^2+b^{2n+1}), \,\,\, &i=0, \\
k[[a,b]]/(ab,a^2+b^{2i}), \,\,\, &1 \le i \le n, \\
k[[a,b]]/(ab,a^2+b^{2n+1}), \,\,\, &n+1 \le i \le 2n.
\end{cases}
\]
Thus $\dim A_0^{ab} = 2+2n+1$ and $\dim A_i^{ab} = 2+2i$ for $1 \le i \le n$.
\end{proof}

\begin{Thm}\label{non-isomorphism2}
$A_0,A_{n+1},\dots,A_{2n}$ have the same dimension $6n+3$ as $k$-linear spaces, and
are non-isomorphic to each other as associative $k$-algebras.
\end{Thm}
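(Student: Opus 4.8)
The plan is to treat all the algebras uniformly as $A_\phi=k\langle\langle a,b\rangle\rangle/(ab+ba,\,a^2+\phi(b))$ with $\phi(b)=b^{2n+1}$ for $A_0$ and $\phi(b)=b^{2n+1}+b^{2i}$ for $A_i$ $(n+1\le i\le 2n)$, and to extract from $A_\phi$ an intrinsic invariant recovering the exponent $2i$ of the ``extra'' monomial (with the convention that $A_0$ has none). For the dimension I would first record the relations forced by associativity. Since $a^2=-\phi(b)$ must commute with $a$, expanding $a\,\phi(b)=\phi(b)\,a$ via $ab^k=(-1)^kb^ka$ kills the odd part of $\phi$, giving $b^{2n+1}a=0$, hence $ab^{2n+1}=0$; multiplying $b^{2n+1}a=0$ on the right by $a$ gives $b^{2n+1}\phi(b)=0$, and since $1+\mathfrak m$ consists of units in the local ring $A_\phi$ this yields $b^{4n+2}=0$. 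Moving every $a$ to the left by $ab+ba=0$, removing $a^2$ by $a^2=-\phi(b)$, and using $ab^{2n+1}=0$, $b^{4n+2}=0$, every element is a combination of $\{b^q\}_{0\le q\le 4n+1}\cup\{ab^q\}_{0\le q\le 2n}$, which closes under multiplication; thus $\dim A_\phi\le 6n+3$. For the reverse inequality I would exhibit the $(6n+3)$-dimensional left regular representation explicitly on a space $W$ with basis $\{e_q\}\cup\{f_q\}$ via $b\cdot e_q=e_{q+1}$, $b\cdot f_q=-f_{q+1}$, $a\cdot e_q=f_q$ (and $0$ for $q\ge 2n+1$), $a\cdot f_q=-(e_{q+2n+1}+e_{q+2i})$, check the two defining relations as operator identities, and note that $e_0$ is cyclic; the resulting surjection $A_\phi\twoheadrightarrow W$ gives $\dim A_\phi\ge 6n+3$ and shows the displayed set is a basis.

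The mechanism for non-isomorphism is cleanest in the \emph{parity-respecting} form $\psi(a)=a\,u(b)$ with $u$ a unit and $\psi(b)=g(b)$ odd in $b$ (the odd-ness of $g$ being forced by $\psi(a)\psi(b)+\psi(b)\psi(a)=0$, since that relation reduces to $a\,g_{\mathrm{even}}(b)=0$). Using $b^ka=(-1)^kab^k$ one gets $\psi(a)^2=a^2\,u(b)u(-b)=-\phi(b)\,U(b)$ with $U(b)=u(b)u(-b)$ an even unit, so the image of the source relation $a^2+b^{2n+1}+b^{2i}=0$ becomes, in $k[b]/(b^{4n+2})$,
\[
g(b)^{2n+1}+g(b)^{2i}=\bigl(b^{2n+1}+b^{2j}\bigr)\,U(b),
\]
where $2i,2j$ are the extra exponents of source and target. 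Splitting into odd and even parts in $b$ gives $g^{2n+1}=b^{2n+1}U$ and $g^{2i}=b^{2j}U$; writing $g=b\,h(b^2)$ with $h$ a unit, the even equation becomes $b^{2i}h^{2i}=b^{2j}h^{2n+1}$, i.e. $b^{2i}\cdot(\text{unit})=b^{2j}\cdot(\text{unit})$. Since $2i,2j\le 4n+1$, comparing orders of vanishing forces $2i=2j$, hence $i=j$; and when the source is $A_0$ the even part of the equation reads $0=b^{2j}U$, forcing the contradiction $b^{2j}=0$, which (together with the symmetric argument for $A_j\to A_0$) separates $A_0$ from every $A_j$. This single computation yields the distinctness of all $n+1$ algebras at once.

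To reach that normal form I would begin with the observation that any isomorphism $\psi\colon A_\phi\to A_{\phi'}$ is local, so $\psi(\mathfrak m)=\mathfrak m$ and $\psi$ acts invertibly on $\mathfrak m/\mathfrak m^2=\langle\bar a,\bar b\rangle$. Writing $\psi(a)=\alpha a+\beta b+\cdots$, $\psi(b)=\gamma a+\delta b+\cdots$, the quadratic part of $\psi(a)\psi(b)+\psi(b)\psi(a)=0$ equals $2\alpha\gamma\,a^2+2\beta\delta\,b^2$; for the grading $\mathrm{wt}(a)=2n+1,\ \mathrm{wt}(b)=2$ the weight-$4$ component is exactly $2\beta\delta\,b^2$, so $\beta=0$, and the linear part is upper triangular with $\alpha,\delta\in k^\times$. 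The associated graded of $A_\phi$ for the valuation attached to this weighting is the weighted-homogeneous algebra $A_0$, which makes the whole family a set of filtered deformations of $A_0$ whose only graded automorphisms are the scalings $a\mapsto\alpha a,\ b\mapsto\delta b$ with $\alpha^2=\delta^{2n+1}$.

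The main obstacle is precisely the reduction of a general $\psi$ to the parity-respecting normal form: showing $\gamma=0$ and that the even-in-$b$ corrections to $\psi(a)$ and the $a$-corrections to $\psi(b)$ can be gauged away by composing with automorphisms of $A_{\phi'}$, order by order in the weight filtration. This is where the argument must diverge from the commutative one behind Proposition~\ref{non-isomorphism1}: commutatively the monomial $b^{2i}$ could be absorbed into a unit rescaling $b\mapsto b\cdot(\text{unit})$, but any such rescaling with a nontrivial even factor destroys $ab+ba=0$, and the only admissible (odd) substitutions preserve the $b$-adic order of the even part of $a^2$. I expect the bulk of the work—and the reason this proof is considerably more delicate than that of Proposition~\ref{non-isomorphism1}—to lie in making this weight-by-weight parity normalization rigorous, after which the displayed computation finishes the theorem.
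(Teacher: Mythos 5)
Your dimension argument is sound and essentially parallels the paper's upper bound (the identities $ab^{2n+1}=0$ and $b^{4n+2}=0$ are derived the same way, though your use of the unit $1+b^{2i-1}$ is cleaner than the paper's appeal to $\bigcap_m(b^m)=0$); the explicit $(6n+3)$-dimensional cyclic module for the lower bound is a genuinely different and perfectly valid device — the paper instead uses the weighted-homogeneous grading of $A_0$ and a direct analysis of the relations $a^2b^t+b^{2n+1+t}+b^{2n+2j+t}=0$ to see that the $a^sb^t$ are independent. I checked your module: both relations hold as operator identities and $e_0$ is cyclic, so that half is complete.

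The non-isomorphism half has a genuine gap, and it is exactly the one you name yourself: the reduction of an arbitrary isomorphism $\psi$ to the parity-respecting normal form $\psi(a)=a\,u(b)$, $\psi(b)=g(b)$ with $g$ odd. This is not a routine normalization to be deferred. First, your weight-$4$ computation only yields $\beta\delta=0$, not $\beta=0$; ruling out the ``swap'' requires an extra input (the paper uses $f(a)^3=0$ together with $b^3\neq 0$). Second, and more seriously, the claim that the even-in-$b$ corrections to $\psi(a)$ can be ``gauged away by composing with automorphisms of $A_{\phi'}$'' is unsupported: the automorphism group of $A_{\phi'}$ is not described, and substitutions such as $b\mapsto b\cdot(\text{even unit})$ or $a\mapsto a+b^k$ generally destroy $ab+ba=0$, so it is not clear the normal form is attainable at all. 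What actually matters for your final computation is that $\psi(a)$ contain no monomial $b^k$ with $1\le k\le 2n$ (terms $b^m$ with $m\ge 2n+1$ turn out to be harmless since their squares die in $(b^{4n+2})$), and this exclusion is precisely where the paper spends its effort: an induction on $k$ using $f(a)^3=0$, splitting into the cases $3k_0\le 2n$ and $3k_0>2n$ and rewriting $b^{3k_0}$ via the relation $a^2b^t=-b^{2n+1+t}-b^{2n+2j+t}$ to see that the offending monomial cannot cancel. Without that step your identity $\psi(a)^2=-\phi'(b)U(b)$ — and hence the order-of-vanishing comparison $b^{2i}\cdot(\text{unit})=b^{2j}\cdot(\text{unit})$ — is not justified. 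Granting the normal form, your odd/even splitting is correct and is a cleaner endgame than the paper's tracking of the single monomial $a^2b^{2i-1}$, but as written the proof is incomplete at its decisive step.
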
 

\begin{proof}
We consider $A_0$ and $A_{n+i}$ for $1 \le i \le n$.
We have $ab^2=b^2a$.

We claim that $a^3 = ab^{2n+1} = b^{4n+2} = 0$ in $A_{n+i}$. 
Indeed  
\[
a^3 + ab^{2n+2i} = - ab^{2n+1} = b^{2n+1}a = - a^3 - b^{2n+2i}a = -a^3 - ab^{2n+2i},
\]
hence $ab^{2n+1} = 0$.  
Then $a^3 = - ab^{2n+1} - ab^{2n+2i} = 0$. 
We have 
\[
b^{4n+2} = - a^2b^{2n+1} - b^{4n+2i+1} = - b^{4n+2i+1} = b^{4n+4i} = \dots.
\]
Since $\bigcap_{m=1}^{\infty} (b^m) = 0$, we obtain $b^{4n+2} = 0$.
We have $a^3 = ab^{2n+1} = b^{4n+2} = 0$ also in $A_0$.

Since $a^2 + b^{2n+1}$ is weighted homogeneous with $wt(a,b) = (2n+1,2)$, $A_0$ is a graded algebra.
$A_0$ has a $k$-linear basis $a^sb^t$ for $0 \le s \le 2$ and $0 \le t \le 2n$. 
Indeed since they have all different weights:
\[
\begin{split}
&wt(1,b,\dots,b^n,a,b^{n+1},ab,b^{n+2},ab^2,\dots,b^{2n}, \\
&ab^n, a^2, ab^{n+1}, a^2b, \dots, ab^{2n}, a^2b^n, a^2b^{n+1}, \dots, a^2b^{2n}) \\
&= (0,2,\dots,2n, 2n+1, 2n+2, 2n+3, 2n+4, 2n+5, \dots,4n,\\
&4n+1, 4n+2, 4n+3, 4n+4, \dots, 6n+1, 6n+2, 6n+4, \dots, 8n+2)
\end{split}
\]
they are linearly independent.

Assuming that there is an isomorphism $f: A_{n+i} \to A_0$ for some $i$, we will derive a contradiction.

Since $a^3 = 0$ in $A_{n+i}$, we have $f(a)^3 = 0$.
Since $wt(b) = 2$ is minimal, $(b + \dots)^3 \ne 0$, hence $b \not\in f(a)$, i.e., the monomial 
$b$ does not appear in $f(a)$.
Since $f(\mathfrak m) = \mathfrak m$ for maximal ideals $\mathfrak m$, we have $a \in f(a)$.

We claim that $b^k \not\in f(a)$ for $1 \le k \le 2n$.
We proceed by induction on $k$.
Assume that $b^k \not\in f(a)$ for $k < k_0$ for some $k_0 \le 2n$.
If $k_0 \le n$, then $wt(b^{k_0}) < wt(a)$, and we have
$b^{3k_0} \in (b^{k_0} + a + \dots)^3$, i.e., 
$b^{3k_0}$ does not cancel since its weight attains the minimum, 
where we note that $3k_0 \le 4n+1$.
Here we omitted coefficients as long as they are non-zero, because we are dealing with only monomials. 
Since $f(a)^3 = 0$, we deduce that $b^{k^0} \not\in f(a)$.
Similarly, if $k_0 > n$, then $a^2b^{k_0} \in (a + b^{k_0} + \dots)^3$, because $wt(a) < wt(b^{k_0})$.
Therefore $b^{k_0} \not\in f(a)$.

We have $f(a)f(b)+f(b)f(a)=0$.
We claim that $a \not\in f(b)$.
Indeed if $a \in f(b)$, then since $a \in f(a)$ and $b^k \not\in f(a)$ for $1 \le k \le 2n$, 
$a^2 \in f(a)f(b)+f(b)f(a)$ is not cancelled, a contradiction.
Thus $a \not\in f(b)$.
Since $f(\mathfrak m) = \mathfrak m$, we have $b \in f(b)$.

We claim that 
$b^{2k} \not\in f(b)$ for all $1 \le k \le n$.
Otherwise, $ab^{2k} \in f(a)f(b)+f(b)f(a)$ does not cancel for some $k$.

We use $f(a)^2 + f(b)^{2n+1} + f(b)^{2n+2i} = 0$, and we look at a monomial $a^2b^{2i-1} = - b^{2n+2i}$.
We have $a^2b^{2i-1} \in f(b)^{2n+2i}$, because $f(b) = b + \dots$.
On the other hand, we claim that $a^2b^{2i-1} \not\in f(a)^2$.
Indeed, since $b^k \not\in f(a)$ for $1 \le k \le 2n$, we need to look at $(a + ab^{2i-1})^2$.
But 
\[
(a + ab^{2i-1})^2 = a^2 + a^2b^{2i-1} - a^2b^{2i-1} - a^2b^{4i-2} =  a^2 - a^2b^{4i-2},
\] 
and $a^2b^{2i-1}$ disappears.

We claim that $b^{2n+2i} \not\in f(b)^{2n+1}$.
The form $(b + ab + \dots)^{2n+1}$, where $a$ does not appear inside the parentheses, contains $a^2b^m$ with $m \ge 2n+1$, and  $a^2b^{2i-1}$ does not appear. 
The form $(b + b^{m_1}+\dots+b^{m_s}+ \dots)^{2n+1}$, where the $m_k$ are odd, does not contain $b^{2n+2i}$, 
because an odd sum of odd numbers is odd and cannot be $2n+2i$.
Therefore $b^{2n+2i} \not\in f(b)^{2n+1}$.
But this is a contradiction with $f(a)^2 + f(b)^{2n+1} + f(b)^{2n+2i} = 0$.
Thus we conclude that there is no isomorphism $f: A_{n+i} \to A_0$.

\vskip 1pc

Next we generalize the above argument to prove that there is no isomorphism $f: A_{n+i} \to A_{n+j}$ for 
$1 \le i < j \le n$.
We claim first that the monomials $a^sb^t$ for $0 \le s \le 2$ and $0 \le t \le 2n$ are $k$-linear basis of $A_{n+j}$.
Indeed, by using the relation $ba = -ab$, any monomials are written as $a^sb^t$ for some integers $s,t$. 
Since $a^3 = ab^{2n+1} = b^{4n+2} = 0$, all possible non-zero monomials are 
$b^t$ for $0 \le t \le 4n+1$, $ab^t$ for $0 \le t \le 2n$ and $a^2b^t$ for $0 \le t \le 2n$.
The only relations among them are given by $a^2b^t + b^{2n+1+t} + b^{2n+2j+t} = 0$, where the last terms for 
$t \ge 2n-2j+2$ vanish.
Thus $b^{2n+1+t}$ for $0 \le t$ are expressed by other monomials while $b^t$ for $0 \le t \le 2n$ are not.
The multiplication of $A_{n+j}$ is also determined by this rule.

We have still $f(a)^3 = 0$.
Then we have again $b \not\in f(a)$, hence $a \in f(a)$.

We prove again that $b^k \not\in f(a)$ for $1 \le k \le 2n$ by induction on $k$.
Assume that $b^k \not\in f(a)$ for $k < k_0$ for some $k_0 \le 2n$.
If $k_0 \le n$, then $b^{3k_0} \in (b^{k_0} + a + \dots)^3$ as before, and obtain a contradiction if $3k_0 \le 2n$.
If $3k_0 > 2n$, then we rewrite
\[
\begin{split}
&b^{3k_0} = - a^2b^{3k_0 - 2n-1} - b^{3k_0+2j-1} \\
&= - a^2b^{3k_0 - 2n-1} + a^2b^{3k_0+2j-2n-2} + b^{3k_0 + 4j-2} = \dots
\end{split}
\]
and obtain a contradiction.
Hence $b^{k^0} \not\in f(a)$.
Similarly, if $k > n$, then we consider $a^2b^{k_0} \in (a + b^{k_0} + \dots)^3$.
Therefore $b^{k_0} \not\in f(a)$.

We use again $f(a)f(b)+f(b)f(a)=0$.
We claim that $a \not\in f(b)$.
Indeed, if $a \in f(b)$, then $a^2 \in f(a)f(b)+f(b)f(a)$, since we have $a \in f(a)$ and $b^k \not\in f(a)$ for $1 \le k \le 2n$, 
a contradiction.
Thus $a \not\in f(b)$, hence $b \in f(b)$.

We have also that 
$b^{2k} \not\in f(b)$ for $1 \le k \le n$, otherwise $ab^{2k} \in f(a)f(b)+f(b)f(a)$.

We use $f(a)^2 + f(b)^{2n+1} + f(b)^{2n+2i} = 0$ again.
We look at a monomial $b^{2n+2i}$. 
We have
\[
b^{2n+2i} =  - a^2b^{2i-1} - b^{2n+2i+2j-1} = - a^2b^{2i-1} + a^2b^{2i + 2j - 2} - \dots.
\]
Hence $a^2b^{2i-1} \in f(b)^{2n+2i}$.
On the other hand, 
$a^2b^{2i-1} \not\in f(a)^2$, because 
\[
(a + ab^{2i-1})^2 = a^2 + a^2b^{2i-1} - a^2b^{2i-1} - a^2b^{4i-2} =  a^2 - a^2b^{4i-2}.
\]

We claim that $a^2b^{2i-1} \not\in f(b)^{2n+1}$.
The form $(b + ab)^{2n+1}$ contains $a^2b^m$ with $m \ge 2n+1$, but does not contain $a^2b^{2i-1}$. 
The form $(b + b^{m_1}+\dots+b^{m_s})^{2n+1}$ with odd $m_k$ does not contain $b^{2n+2i}$, 
because an odd sum of odd numbers is odd and cannot be $2n+2i$.
We note also that $j > i$.
Therefore we obtain a contradiction with $f(a)^2 + f(b)^{2n+1} + f(b)^{2n+2i} = 0$ again, and 
we conclude that there is no isomorphism $f: A_{n+i} \to A_{n+j}$.
\end{proof}

%%%%%%%%%%%%%%%%%%%%%%%%%%%%%%%%%
%%%%%%%%%%%%%%%%%%%%%%%%%%%%%%%%%
%%%%%%%%%%%%%%%%%%%%%%%%%%%%%%%%%
\section{Example: universal flopping contraction of higher length}

By using Karmazyn \cite{Karmazyn}, we can describe NC deformation algebras of the reduced fiber $\mathcal{O}_C$ 
for the universal flopping contractions 
in the case of higher length $l \ge 3$.
We recall a description of the endomorphism algebras of the tilting bundles: 

\begin{Thm}[\cite{Karmazyn}~Theorem 1.3]\label{Karmazyn}
Let $\tilde f: \tilde Y \to \tilde X = Spec(R)$ be a universal flopping contraction of length $l$ for $l = 1,2,3,4,5,6$, and let 
$A = \text{End}(\mathcal{O}_{\tilde Y} \oplus M)$ be the endomorphism algebra of a tilting generator
$\mathcal{O}_{\tilde Y} \oplus M$ of $D^b(\text{coh}(\tilde Y))$ over $\tilde X$.
Then $A$ is a quiver algebra $A = H[Q]/I$ over a polynomial algebra $H$ 
with relations $I$ as follows, where there are two vertices $v_0,v_1$ of the quiver $Q$ with the 
corresponding idempotents $e_0,e_1$ and edges 
$a_0,a_1^*,a \in \text{Hom}(v_0,v_1)$, $a_0^*,a_1,a^* \in \text{Hom}(v_1,v_0)$ and $b,c,d \in \text{Hom}(v_1,v_1)$.

(1) $l=1$, $H = k[t]$, and $I$ is generated by 
\[
a^*_0a_0-a_1a^*_1 = te_0, \quad 
a^*_1a_1-a_0a^*_0 = -te_1.
\]

(2) $l=2$, $H = k[t,u_1,u_2,u_3]$, and $I$ is generated by 
\[
\begin{split}
&a^*a = te_0, \quad b^2 = u_1e_1, \quad c^2 = u_2e_1, \quad d^2 = u_3e_1, \\
&aa^* + b + c + d = \frac 12 te_1.
\end{split}
\]

(3) $l=3$, $H = k[t,u_1,u_2,u_3,u_4,u_5]$, and $I$ is generated by 
\[
\begin{split}
&a^*d = ta^*, \quad da = ta, \quad a^*a = (t^2-u_5)e_0, \quad aa^* = d^2 - u_5e_1, \\
&b^3 = u_2b + u_1e_1, \quad c^3 = u_4c + u_3e_1, \quad b + c + d = \frac 13 te_1.
\end{split}
\]

(4) $l=4$, $H = k[t,u_1,u_2,u_3,u_4,u_5,u_6]$, and $I$ is generated by 
\[
\begin{split}
&a^*d = ta^*, \quad da = ta, \quad a^*a = (t^3 - u_6t - u_5)e_0, \\
&aa^* = d^3 - u_6d - u_5e_1, \quad b^2 = u_1e_1, \quad c^4 = u_4c^2 + u_3c + u_2e_1, \\
&b+c+d = \frac 14 te_1.
\end{split}
\]

(5) $l=5$, $H = k[t,u_1,u_2,u_3,u_4,u_5,u_6,u_7]$, and $I$ is generated by 
\[
\begin{split}
&a^*d = ta^*, \quad da = ta, \quad a^*a = (t^4 - u_3t^2 - u_2t - u_1)e_0, \\ 
&aa^* = d^4 - u_3d^2 - u_2d - u_1e_1, \\
&cbc + bc^2 + b^3c = - u_7bc - u_6c - u_4e_1, \\ 
&(c+b^2)^2 + bcb = - u_7(c+b^2) - u_6b - u_5e_1, \quad d - b = \frac 15 te_1.
\end{split}
\]

(6) $l=6$, $H = k[t,u_1,u_2,u_3,u_4,u_5,u_6,u_7]$, and $I$ is generated by 
\[
\begin{split}
&a^*d = ta^*, \quad da = ta, \quad a^*a = (t^5 - u_7t^3 - u_6t^2 - u_5t - u_4)e_0, \\ 
&aa^* = d^5 - u_7d^3 - u_6d^2 - u_5d - u_4e_1, \quad b^2 = u_1e_1, \\ 
&c^3 = u_3c + u_2e_1, \quad b+c+d = \frac 16 te_1.
\end{split}
\]
\end{Thm}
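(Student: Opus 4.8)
The plan is to prove the classification length by length, reducing each case to a deformation computation anchored at the surface fibre, and I will freely use the Katz--Morrison description recalled at the start of this section. That description identifies the universal flopping contraction of length $l$ with a versal deformation of the contraction $f_H\colon L \to H_{\mathrm{surf}}$ of a Du Val surface carrying a rational double point of type $A_1, D_4, E_6, E_7, E_8, E_8$ (for $l=1,\dots,6$), the extracted curve being the one of multiplicity $l$ in the fundamental cycle. Consequently $A = \text{End}(\mathcal{O}_{\tilde Y}\oplus M)$ is a flat family of algebras over $\text{Spec}(H)$, where the parameter ring $H$ is the base of the semiuniversal deformation of that rational double point; as a sanity check, $\dim \text{Spec}(H)$ equals the Milnor number $1,4,6,7,8,8$, which is exactly the number of generators $t,u_1,\dots$ listed in each case. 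The first step is then to identify the closed fibre of $A$ (the specialisation $t=u_i=0$) with the idempotent contraction $e\,\Pi\,e$ of the preprojective algebra $\Pi$ of the corresponding Dynkin type, the idempotent $e$ retaining only the two vertices attached to $\mathcal{O}_{\tilde Y}$ and $M$; this is the reconstruction algebra of the partial resolution $L\to H_{\mathrm{surf}}$. Its quiver carries exactly the two vertices $v_0,v_1$ and the arrows $a,a^*,b,c,d,\dots$ of the statement, read off from the dual graph of the minimal resolution, and since the arrows are deformation-invariant this fixes the generators of $A$ once and for all.

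The second step is to deform the relations. Here I would invoke the Grothendieck--Brieskorn--Slodowy model, in which the semiuniversal deformation of an ADE singularity has base $\mathfrak h/W$, so that $H$ is the ring of $W$-invariants and the generators $t,u_1,u_2,\dots$ are the fundamental invariants graded by the Coxeter degrees. Each undeformed preprojective relation then picks up correction terms whose coefficients are these invariants: a loop relation $b^{e}=0$ deforms to $b^{e} = (\text{invariant})\,e_1 + \cdots$, while the vertex relations $a^*a=0$ and $aa^*=(\text{loops})$ acquire factors of $t$ and of the higher invariants. Concretely I would match, type by type, the defining equation of the versal deformation of the singularity with the trace and determinant identities forced on the matrix factorisation of $M$, exactly as Curto--Morrison and Aspinwall--Morrison do for $l=2$ in Theorem~\ref{Curto-Morrison Thm}: the relations $a^2=-u$, $b^2=-w$, $ab+ba=-2v$ used there are the prototypes of the deformed loop relations $b^2=u_1e_1$, $c^2=u_2e_1$, $d^2=u_3e_1$ of case $(2)$, and the same mechanism is meant to produce the higher-degree loop relations of cases $(3)$--$(6)$.

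To show the listed relations are complete I would run a flatness and rank count. The endomorphism algebra $\text{End}(\mathcal{O}_{\tilde Y}\oplus M)$ is a non-commutative crepant resolution, hence a maximal Cohen--Macaulay $H$-module of predetermined rank, and it suffices to check that the abstract quiver algebra $H[Q]/I$ presented by the stated relations is $H$-flat of exactly that rank; equivalently, that its specialisation at $t=u_i=0$ is the reconstruction algebra with no relations lost under the specialisation. A Hilbert-series comparison over $H$ then pins down the two-sided ideal $I$ with no room for further relations.

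The hard part will be the second step for the larger singularities. For $l=1,2$ the loops reduce to a single generator with a quadratic relation, and the computation is the explicit matrix-factorisation one of the preceding sections. But for $l=3,4$ the loops satisfy genuine cubic and quartic relations, and for the two $E_8$ cases $l=5,6$ one meets honestly non-commutative relations such as $cbc+bc^2+b^3c = -u_7bc-u_6c-u_4e_1$, whose precise lower-order terms and whose pairing of the invariants $u_i$ with Coxeter-weighted monomials admit no shortcut: one must control the full path algebra of the resolution quiver together with the deformation of the preprojective relations, and the case analysis forced by the distinct Dynkin types cannot be avoided.
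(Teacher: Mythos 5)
You should first be aware that the paper contains no proof of this statement to compare against: Theorem~\ref{Karmazyn} is imported verbatim from Karmazyn's paper (cited as \cite{Karmazyn}, Theorem 1.3) and is used as a black box in the proof of Theorem~\ref{higher length}. So the only question is whether your sketch would stand on its own as a proof of Karmazyn's classification, and as written it would not. Your overall architecture --- closed fibre equal to a corner algebra $e\Pi e$ of a preprojective algebra of the relevant ADE type, relations deformed over the base of the versal deformation of the partial resolution, completeness of the relations verified by flatness and a rank count against the NCCR --- is consistent with how this result is actually established in the literature, but two of its load-bearing steps are respectively wrong and absent.

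First, the identification of $H$ with the ring of $W$-invariants $k[\mathfrak h]^W$ is incorrect. The universal flopping contraction is a versal deformation of the \emph{partial resolution} $f_H\colon L\to H$, not of the singularity, and by Katz--Morrison--Pinkham its base is $\mathfrak h/W_0$ for the parabolic subgroup $W_0$ generated by the reflections at the non-extracted nodes, not $\mathfrak h/W$. The dimensions agree (both equal the rank), which is why your Milnor-number sanity check passes, but the gradings do not: for $l=2$ the fundamental $W$-invariants of $D_4$ have degrees $2,4,4,6$, whereas the presentation in case (2) requires three independent degree-$2$ parameters $u_1,u_2,u_3$ (one for each $A_1$ branch, i.e.\ for each $W_0$-orbit) together with the coordinate $t$ transverse to the extracted node. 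Your ``Grothendieck--Brieskorn--Slodowy'' step therefore has to be rerun with $W_0$ in place of $W$ before the pairing of the $u_i$ with the loop relations can even be set up. Second, and more seriously, the entire content of the theorem is the explicit list of relations, and you concede in your final paragraph that you have not derived them: matching the deformed preprojective relations through the corner idempotent to the stated non-commutative relations (especially the $E_8$ cases (5) and (6)) is exactly the computation the theorem records, and no amount of Hilbert-series bookkeeping substitutes for it, since the flatness argument can only certify a presentation you have already written down. As a blueprint your proposal points in the right direction; as a proof it establishes none of the six cases.
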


We calculate the NC deformation algebra of the reduced central fiber by using the above theorem:

\begin{Thm}\label{higher length}
Let $\tilde f: \tilde Y \to \tilde X = Spec(R)$ be as in Theorem~\ref{Karmazyn}.
Then the NC deformation algebra $A_{\text{def}}$ of the reduced central fiber $\mathcal{O}_C$ of $\tilde f$
is the completion of a quotient algebra $A/Ae_0A$, and is given by the following:

(1) $l = 1$: $A_{\text{def}} = k$.

(2) $l = 2$: $A_{\text{def}} = k[[t]] \langle \langle b,c \rangle \rangle/(tbc-tcb, \,\,\, bc^2-c^2b, \,\,\, b^2c-cb^2)$.

(3) $l = 3$: 
\[
\begin{split}
&A_{\text{def}} = k[[t,u_2,u_4]] \langle \langle b,c \rangle \rangle/(u_2(bc-cb) - (b^3c-cb^3), \\
&u_4(bc-cb) - (bc^3-c^3b), \,\,\, 3(bc^2-c^2b) + 3(b^2c-cb^2) - 2(tbc - tcb)).
\end{split}
\]

(4) $l = 4$: 
\[
\begin{split}
&A_{\text{def}} = k[[t,u_3,u_4,u_6]] \langle \langle b,c \rangle \rangle/(b^2c-cb^2, \\ 
&u_3(bc-cb) + u_4(bc^2-c^2b) - (bc^4-c^4b), \\
&(16u_6- 3t^2)(bc-cb) + 12t(bc^2-c^2b) \\
&- 16(b^3c-cb^3 + b^2c^2 -c^2b^2+ bcbc-cbcb  -c^3b + bc^3)).
\end{split}
\]

(5) $l = 5$: 
\[
\begin{split}
&A_{\text{def}} = k[[t,u_2,u_3,u_6,u_7]] \langle \langle b,c \rangle \rangle/(u_7(bc-cb) + (bc^2-c^2b) + (b^3c - cb^3), \\
&u_6(bc-cb) + u_7(b^2c - bcb) + (bcbc - cbcb) + (b^2c^2 - bc^2b) + (b^4c - b^3cb), \\
&u_2(bc-cb) + u_3(b^2c-cb^2 + (2/5)t(bc-cb)) - (4/125)t^3(bc-cb) \\
&- (6/25)t^2(b^2c-cb^2) - (4/5)t(b^3c^cb^3) - (b^4c-cb^4)).
\end{split}
\]

(6) $l = 6$: 
\[
\begin{split}
&A_{\text{def}} = k[[t,u_3,u_5,u_6,u_7]] \langle \langle b,c \rangle \rangle/(b^2c-cb^2, \,\,\, u_3(bc-cb) - bc^3-c^3b, \\
&(- (5/6^4)t^4 + (1/12)t^2u_7 + (1/3)tu_6 + u_5)(bc - cb) \\
&+ ((10/6^3)t^3 - (1/2)t - u_6)(b^2c - cb^2 + bc^2 - c^2b) \\
&+ (- (10/6^2)t^2 + (1/6)u_7) \\
&(b^3c-cb^3 + b^2c^2 - c^2b^2 + bcbc-cbcb + bc^3 - c^3b) \\
&+ (5/6)t (c^4b-bc^4 + c^3b^2-b^2c^3 + c^2bcb - bcbc^2 + cbc^2b - bc^2bc \\
&+ c^2b^3 - b^3c^2 + cbcb^2-bcb^2c + cb^2cb - b^2cbc + cb^4 - b^4c) \\
&- (b^5c - cb^5) \\
&+ (b^4c^2-c^2b^4 +b^3cbc-cbcb^3 + bcb^3c - cb^3cb + b^2cb^2c - cb^2cb^2) \\
&+ (b^3c^3 - c^3b^3 + b^2cbc^2 - cbc^2b^2 + b^2c^2bc - c^2bcb^2 + bcb^2c^2 - cb^2c^2b \\
&+ bc^2b^2c - c^2b^2cb + bcbcbc - cbcbcb) \\
&+ (b^2c^4-c^4b^2 + bcbc^3 - cbc^3b + bc^2bc^2 - c^2bc^2b + bc^3bc - c^3bcb) \\
&+ (bc^5 - c^5b)).
\end{split}
\]
\end{Thm}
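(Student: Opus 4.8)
The plan is to reduce the computation to the quotient $A/Ae_0A$ via Theorem~\ref{partial}, and then to read the relations off Karmazyn's presentation (Theorem~\ref{Karmazyn}) by imposing that the eliminated parameters remain central.

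First I would fix the labelling $e_0 \leftrightarrow \mathcal{O}_{\tilde Y}$ and $e_1 \leftrightarrow M$, so that the reduced fiber $\mathcal{O}_C$ is the versal deformation of the simple object attached to the vertex $v_1$, exactly as in the length-$2$ case of Theorem~\ref{univ def alg length 2}. Taking $J = \{v_1\}$ in Theorem~\ref{partial}, the ideal $I$ of endomorphisms factoring through $P_{J^c} = \mathcal{O}_{\tilde Y}$ is precisely the two-sided ideal $Ae_0A$, so $A_{\text{def}}$ is the formal completion of $A/Ae_0A$. This yields the opening assertion and reduces everything to computing the corner algebra $e_1(A/Ae_0A)e_1$ for each $l$.

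Next I would strip the presentation of Theorem~\ref{Karmazyn}. Setting $e_0 = 0$ annihilates every arrow incident to $v_0$, so the quotient is generated over $H$ by the loops $b, c, d$ at $v_1$ (for $l = 1$ there are no loops, and the relation $a_1^* a_1 - a_0 a_0^* = -t e_1$ forces $t = 0$, giving $A_{\text{def}} = k$). The linear relation ($b + c + d = \tfrac1l t e_1$ for $l \neq 5$, and $d - b = \tfrac15 t e_1$ for $l = 5$) eliminates $d$ as a linear form in $b, c, t$; then the relation carried by $d$ — either the explicit loop relation $d^2 = u_3 e_1$ when $l = 2$, or, when $l \ge 3$, the consequence of $aa^* = 0$ that turns $aa^* = d^{\,l-1} - (\cdots)$ into $d^{\,l-1} = (\cdots)$ — together with the remaining loop relations for $b$ and $c$, expresses the non-surviving central variables as explicit noncommutative polynomials $u_i = p_i(b, c, t)$; the surviving ones are exactly those named in the displayed power-series rings.

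The relations of $A_{\text{def}}$ then arise as centrality conditions: each $u_i = p_i(b,c,t)$ is central in $A$, so $[p_i, b] = 0$ and $[p_i, c] = 0$, and the nontrivial one (the other being either identically zero or a consequence) is recorded as a generator of the relation ideal. A variable built from $b$ (such as $u_1 = b^3 - u_2 b$ for $l = 3$, or $u_1 = d^4 - u_3 d^2 - u_2 d$ with $d = b + \tfrac15 t$ for $l = 5$) contributes $[u_1, c]$; a variable built from $c$ (such as $u_3 = c^3 - u_4 c$) contributes $[u_i, b]$; and the variable carried by $d = \tfrac1l t - b - c$ contributes the commutator with $c$, computed by the Leibniz expansion $[d^{\,k}, c] = -\sum_{j} d^{\,j}(bc - cb)\,d^{\,k-1-j}$ after substituting the linear form for $d$. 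I have verified, for instance, that for $l = 5$ the three displayed relations are exactly $[u_5, b]$, $[u_4, b]$ and $[u_1, c]$, and that for $l = 4$ the degree-three part $s^2 w + s w s + w s^2$ (with $s = b + c$, $w = bc - cb$) collapses to $b^3 c - cb^3 + b^2 c^2 - c^2 b^2 + bcbc - cbcb + bc^3 - c^3 b$, matching the $-16(\cdots)$ term.

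The main obstacle will be the sheer size of the expansions for $l = 5, 6$: the case $l = 6$ requires $[d^5, c]$, whose expansion over the noncommuting $b, c$ produces the long relation recorded in part (6), and the labor lies in collecting the resulting monomials by degree (and by the weight grading its weighted-homogeneous part). A secondary nuisance is that in a few cases, notably $l = 2$, the raw centrality relation must be reduced modulo the simpler relations $b^2 c - cb^2$ and $bc^2 - c^2 b$ before it takes the displayed form $tbc - tcb$. Finally, I would confirm that the listed relations generate the \emph{whole} relation ideal rather than merely lying in it: since $A/Ae_0A$ equals the corner $e_1 A e_1$ modulo the image of $e_1 A e_0 A e_1$, its presentation is forced by that of $e_1 A e_1$, and I would match the number of relations against $\dim \text{Ext}^2(\mathcal{O}_C, \mathcal{O}_C)$ and exhibit an explicit monomial basis (as in the proof of Theorem~\ref{non-isomorphism2}) to rule out further relations, after which passing to the completion finishes the proof.
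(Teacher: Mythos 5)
Your proposal is correct and follows essentially the same route as the paper: reduce to the completion of $A/Ae_0A$ via Theorem~\ref{partial}, eliminate $d$ by the linear relation in Karmazyn's presentation, extract the higher-order relations as commutators $[p_i(b,c,t),b]$ and $[p_i(b,c,t),c]$ of the centrally-eliminated parameters, and certify completeness by counting against $\dim\mathrm{Ext}^2(\mathcal{O}_C,\mathcal{O}_C)=\dim H^0(\bigwedge^2 N_{C/\tilde Y})$ after accounting for the order-two commutation relations with the surviving central variables. The only cosmetic differences are your quiver-theoretic justification of the $l=1$ case and the optional monomial-basis check, neither of which changes the argument.
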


\begin{proof}
We note that the two-sided ideal $Ae_0A$ is generated by those endomorphisms of 
$\mathcal{O}_{\tilde Y} \oplus M$
which factor through $\mathcal{O}_{\tilde Y}$.
Therefore its completion is the NC deformation algebra by Theorem~\ref{partial}.

We deduce our result by changing variables from $A/Ae_0A$.
(1) is clear because $C$ is a $(-1,-1)$-curve, and is rigid inside $\tilde Y$.

\vskip 1pc

(2) This is already proved in Theorem~\ref{univ def alg length 2}, but we give an alternative proof.
We obtain
\[
A/Ae_0A = H \langle b,c,d \rangle/(b^2-u_1, \,\,\, c^2-u_2, \,\,\, d^2-u_3, \,\,\, b+c+d-t/2).
\]
We remove $d$ by using $d = t/2 - b - c$.
Since $b^2$, $c^2$ and $d^2 = t^2/4 - (tb + tc) + b^2+c^2 + (bc+cb)$ are in the center
of $A/Ae_0A$, so is $(bc+cb) - (tb+tc)$.
Then $(bbc + bcb - bcb - cbb) - t(bc-cb) = 0$.
Therefore we have $tbc - tcb = 0$.

We have $\dim \tilde Y = 6$ and $(K_{\tilde Y},C) = 0$.
Since $C \subset \tilde Y$ is a deformation of a $(1,-3)$-curve, there is an exact sequence
\[
0 \to \mathcal{O}_C^3 \to N_{C/\tilde Y} \to \mathcal{O}_C(1) \oplus \mathcal{O}_C(-3) \to 0
\]
for a normal bundle $N_{C/\tilde Y}$.
Since the pair $C \subset \tilde Y$ has no more deformation, 
we deduce that $H^1(T_{\tilde Y} \vert_C) = 0$ for the tangent bundle $T_{\tilde Y}$.
Indeed if $H^1(T_{\tilde Y} \vert_C) \ne 0$, then $C \subset \tilde Y$ has a non-trivial deformation.
Thus there is no factor of degree $\le -2$ in $T_{\tilde Y} \vert_C$, and 
we obtain 
\[
N_{C/\tilde Y} \cong \mathcal{O}_C(1) \oplus \mathcal{O}_C \oplus \mathcal{O}_C(-1)^3.
\] 
Then 
\[
\bigwedge^2 N_{C/\tilde Y} \cong \mathcal{O}_C(1) \oplus \mathcal{O}_C^3 \oplus \mathcal{O}_C(-1)^3 \oplus \mathcal{O}_C(-2)^3.
\]
Therefore 
\[
\dim H^0(N_{C/\tilde Y}) = 3, \quad \dim H^1(N_{C/\tilde Y})=0, \quad \dim H^0(\bigwedge^2 N_{C/\tilde Y}) = 5.
\]
We have $3$ generators $t,b,c$ of the maximal ideal of $A_{\text{def}}$, and
they satisfy $5$ relations.
Since 
\[
\dim (\text{Im}(\bigwedge^2 H^0(N_{C/\tilde Y}) \to H^0(\bigwedge^2 N_{C/\tilde Y}))) = 2,
\]
there are only $2$ relations of order $2$. 

Since $t,a^2,b^2$ are in the center, we have relations 
$at-ta=bt-tb=tab-tba=ab^2-b^2a=a^2b-ba^2=0$.
We have thus $3$ linearly independent relations of order $3$ besides $2$ relations of order $2$.
It follows that there are no more relations.
Therefore we have
\[
\begin{split}
&A_{\text{def}} = k\langle \langle t,b,c \rangle \rangle/(at-ta, \,\,\, bt-tb, \,\,\, tab-tba, \,\,\, ab^2-b^2a, \,\,\, a^2b-ba^2) \\
&= k[[t]] \langle \langle b,c \rangle \rangle/(tab-tba, \,\,\, ab^2-b^2a, \,\,\, a^2b-ba^2).
\end{split}
\]

\vskip 1pc

(3) We obtain
\[
A/Ae_0A = H \langle b,c,d \rangle/(b^3-u_2b-u_1, \,\,\, c^3-u_4c-u_3, \,\,\, d^2-u_5, \,\,\, b+c+d-t/3).
\]
We remove $d$ by using $d = t/3 - b - c$.

Since $b^3-u_2b$ and $c^3-u_4c$ are in the center, 
we have 
\[
\begin{split}
&0 = (b^3-u_2b)c - c(b^3-u_2b) = - u_2(bc-cb) + (b^3c-cb^3), \\
&0 = (c^3-u_4c)b - b(c^3-u_4c) = u_4(bc-cb) - (bc^3-c^3b).
\end{split}
\]
Since $d^2 = t^2/9 - \frac 23(tb + tc) + b^2+c^2 + (bc+cb)$ is in the center, so is 
$b^2 + c^2 + (bc+cb) - \frac 23 (tb+tc)$.
Hence 
\[
(bc^2-c^2b) + (bbc + bcb - bcb - cbb) - \frac 23 t(bc-cb) = 0.
\]
Thus $3(bc^2-c^2b) + 3(b^2c-cb^2) - 2(tbc - tcb) = 0$.

We have $N_{C/\tilde Y} \cong \mathcal{O}_C(1) \oplus \mathcal{O}_C^3 \oplus \mathcal{O}_C(-1)^3$ as before.
Since $\bigwedge^2 N_{C/\tilde Y} \cong \mathcal{O}_C(1)^3 \oplus \mathcal{O}_C^{3+3} \oplus \mathcal{O}_C(-1)^9 \oplus 
\mathcal{O}_C(-2)^3$, 
we have
\[
\dim H^0(N_{C/\tilde Y}) = 5, \quad \dim H^1(N_{C/\tilde Y})=0, \quad \dim H^0(\bigwedge^2 N_{C/\tilde Y}) = 12.
\]
Thus we have $5$ generators $t,u_2,u_4,b,c$ and $12$ relations.
Since 
\[
\dim (\text{Im}(\bigwedge^2 H^0(N_{C/\tilde Y}) \to H^0(\bigwedge^2 N_{C/\tilde Y}))) = 9,
\]
there are $9$ relations of order $2$, which are 
commutations with central variables.
We already have $3$ relations of order $3$, and there are no more relations.

\vskip 1pc

(4) We obtain
\[
A/Ae_0A = H \langle b,c,d \rangle/(b^2-u_1, \,\,\, c^4-u_4c^2-u_3c-u_2, \,\,\, d^3-u_6d-u_5, \,\,\, b+c+d-t/4).
\]
We remove $d$ by using $d = t/4 - b - c$.

We have $N_{C/\tilde Y} \cong \mathcal{O}_C(1) \oplus \mathcal{O}_C^4 \oplus \mathcal{O}_C(-1)^3$, 
$\bigwedge^2 N_{C/\tilde Y} \cong \mathcal{O}_C(1)^4 \oplus \mathcal{O}_C^{6+3} \oplus \mathcal{O}_C(-1)^{12} \oplus \mathcal{O}_C(-2)^3$.
Hence 
\[
\dim H^0(N_{C/\tilde Y}) = 6, \quad \dim H^1(N_{C/\tilde Y})=0, \quad \dim H^0(\bigwedge^2 N_{C/\tilde Y}) = 17.
\]
Thus we have $6$ generators $t,u_3,u_4,u_6,b,c$ and $17$ relations.
Since 
\[
\dim (\text{Im}(\bigwedge^2 H^0(N_{C/\tilde Y}) \to H^0(\bigwedge^2 N_{C/\tilde Y}))) = 14,
\]
there are $14$ relations of order $2$, which are 
commutations with central variables.
We need $3$ more relations of order $\ge 3$.

Since $b^2$ and $c^4-u_4c^2-u_3c$ are in the center, we have 
\[
\begin{split}
&0 = b^2c-cb^2, \\
&0 = b(c^4-u_4c^2-u_3c) - (c^4-u_4c^2-u_3c)b \\
&= -u_3(bc-cb) - u_4(bc^2-c^2b)+(bc^4-c^4b).
\end{split}
\]
Since 
\[
d^3-u_6d = (t/4)^3 - (3/16)t^2(b+c) + (3/4)t(b+c)^2 - (b+c)^3 - u_6(t/4 - b - c)
\]
is in the center, so is  
\[
(3/4)t(c^2+bc+cb) - (b^3+c^3 + bbc + bcb + cbb + cbc + ccb + bcc) + (u_6 - (3/16)t^2)(b + c).
\]
Thus 
\[
\begin{split}
&0 = 16((3/4)t(bc^2-c^2b) - (b^3c-cb^3 + b^2c^2 -c^2b^2+ bcbc-cbcb  -c^3b + bc^3) \\
&+ (u_6- (3/16)t^2)(bc-cb)) \\
&= (16u_6- 3t^2)(bc-cb) + 12t(bc^2-c^2b) \\
&- 16(b^3c-cb^3 + b^2c^2 -c^2b^2+ bcbc-cbcb  -c^3b + bc^3).
\end{split}
\]
Therefore we have our claim.

\vskip 1pc

(5) We obtain
\[
\begin{split}
&A/Ae_0A = H \langle b,c,d \rangle/(cbc + bc^2 + b^3c + u_7bc + u_6c + u_4, \\
&(c+b^2)^2 + bcb + u_7(c+b^2) + u_6b + u_5, \,\,\, d^4-u_3d^2-u_2d-u_1, \,\,\, - b+d-t/5).
\end{split}
\]
We remove $d$ by using $d = t/5 + b$.

We have $N_{C/\tilde Y} \cong \mathcal{O}_C(1) \oplus \mathcal{O}_C^5 \oplus \mathcal{O}_C(-1)^3$, 
$\bigwedge^2 N_{C/\tilde Y} \cong \mathcal{O}_C(1)^5 \oplus \mathcal{O}_C^{10+3} \oplus \mathcal{O}_C(-1)^{15} 
\oplus \mathcal{O}_C(-2)^3$.
Hence 
\[
\dim H^0(N_{C/\tilde Y}) = 7, \quad \dim H^1(N_{C/\tilde Y})=0, \quad \dim H^0(\bigwedge^2 N_{C/\tilde Y}) = 23.
\]
Thus we have $7$ generators $t,u_2,u_3,u_6,u_7,b,c$ and $23$ relations.
Since 
\[
\dim (\text{Im}(\bigwedge^2 H^0(N_{C/\tilde Y}) \to H^0(\bigwedge^2 N_{C/\tilde Y}))) = 20,
\]
there are $20$ relations of order $2$, which are 
commutations with central variables.
We need $3$ more relations of order $\ge 3$.

We have 
\[
\begin{split}
&0 = b(cbc + bc^2 + b^3c + u_7bc + u_6c) - (cbc + bc^2 + b^3c + u_7bc + u_6c)b \\
&= u_6(bc-cb) + u_7(b^2c-bcb) + (bcbc - cbcb) + (b^2c^2 - bc^2b) + (b^4c - b^3cb), \\
&0 = b((c+b^2)^2 + bcb + u_7(c+b^2) + u_6b) - ((c+b^2)^2 + bcb + u_7(c+b^2) + u_6b)b \\
&= (bc^2-c^2b) + (bcb^2+b^3c - cb^3 - b^2cb) + (b^2cb - bcb^2) + u_7(bc-cb) \\
&= u_7(bc-cb) + (bc^2-c^2b) + (b^3c - cb^3).
\end{split}
\]
We have $d^4-u_3d^2-u_2d
= (t/5 + b)^4 - u_3(t/5 + b)^2 - u_2(t/5 + b)$.
Hence $b^4 + (4/5)tb^3 + (6/25)t^2b^2 + (4/125)t^3b - u_3(b^2 + (2/5)tb) - u_2b$ is in the center, 
and we obtain
\[
\begin{split}
&0 = c(b^4 + (4/5)tb^3 + (6/25)t^2b^2 + (4/125)t^3b - u_3(b^2 + (2/5)tb) - u_2b) \\
&- (b^4 + (4/5)tb^3 + (6/25)t^2b^2 + (4/125)t^3b - u_3(b^2 + (2/5)tb) - u_2b)c \\
&= u_2(bc-cb) + u_3(b^2c-cb^2 + (2/5)t(bc-cb)) - (4/125)t^3(bc-cb) - (6/25)t^2(b^2c-cb^2) \\
&- (4/5)t(b^3c^cb^3) - (b^4c-cb^4)
\end{split}
\]
and our result.

\vskip 1pc

(6) We obtain
\[
\begin{split}
&A/Ae_0A = H \langle b,c,d \rangle/b^2-u_1, \,\,\, c^3-u_3c-u_2, \\
&d^5-u_7d^3-u_6d^2 - u_5d - u_4, \,\,\, b+c+d-t/6).
\end{split}
\]
We remove $d$ by using $d = t/6 - b -c$.

We have $N_{C/\tilde Y} \cong \mathcal{O}_C(1) \oplus \mathcal{O}_C^5 \oplus \mathcal{O}_C(-1)^3$, 
$\bigwedge^2 N_{C/\tilde Y} \cong \mathcal{O}_C(1)^5 \oplus \mathcal{O}_C^{10+3} \oplus \mathcal{O}_C(-1)^{15} \oplus \mathcal{O}_C(-2)^3$.
Hence 
\[
\dim H^0(N_{C/\tilde Y}) = 7, \quad \dim H^1(N_{C/\tilde Y})=0, \quad \dim H^0(\bigwedge^2 N_{C/\tilde Y}) = 23.
\]
Thus we have $7$ generators $t,u_3,u_5,u_6,u_7,b,c$ and $23$ relations.
Since 
\[
\dim (\text{Im}(\bigwedge^2 H^0(N_{C/\tilde Y}) \to H^0(\bigwedge^2 N_{C/\tilde Y}))) = 20,
\]
there are $20$ relations of order $2$, which are 
commutations with central variables.
We need $3$ more relations of order $\ge 3$.

We have 
\[
0 = b(c^3-u_3c) - (c^3-u_3c)b = - u_3(bc-cb) + bc^3-c^3b.
\]
Since 
\[
d^5-u_7d^3-u_6d^2 - u_5d = (t/6-b-c)^5-u_7(t/6-b-c)^3-u_6(t/6-b-c)^2 - u_5(t/6-b-c)
\]
we calculate
\[
\begin{split}
&b(- (5/6^4)t^4(b+c) + (10/6^3)t^3(b+c)^2 - (10/6^2)t^2(b+c)^3 \\
&+ (5/6)t(b+c)^4 - (b+c)^5 + u_7((1/12)t^2(b+c) - (1/2)t(b+c)^2 \\
&+ (1/6)(b+c)^3) + u_6((1/3)t(b+c) - (b+c)^2) + u_5(b+c)) \\
&- (- (5/6^4)t^4(b+c) + (10/6^3)t^3(b+c)^2 - (10/6^2)t^2(b+c)^3 \\
&+ (5/6)t(b+c)^4 - (b+c)^5 + u_7((1/12)t^2(b+c) - (1/2)t(b+c)^2 \\
&+ (1/6)(b+c)^3) + u_6((1/3)t(b+c) - (b+c)^2) + u_5(b+c))b \\
&= (- (5/6^4)t^4 + (1/12)t^2u_7 + (1/3)tu_6 + u_5)(bc - cb) \\
&+ ((10/6^3)t^3 - (1/2)t - u_6)(b^2c - bcb + bc^2 - c^2b) \\
&+ (- (10/6^2)t^2 + (1/6)u_7)(b^3c-cb^3 + b^2c^2 - c^2b^2 + bcbc-cbcb + bc^3 - c^3b) \\
&+ (5/6)t (c^4b-bc^4 + c^3b^2-b^2c^3 + c^2bcb - bcbc^2 \\
&+ cbc^2b - bc^2bc + c^2b^3 - b^3c^2 + cbcb^2-bcb^2c + cb^2cb - b^2cbc + cb^4 - b^4c) \\
&- (b^5c - cb^5) + (b^4c^2-c^2b^4 +b^3cbc-cbcb^3 + bcb^3c - cb^3cb + b^2cb^2c - cb^2cb^2) \\
&+ (b^3c^3 - c^3b^3 + b^2cbc^2 - cbc^2b^2 + b^2c^2bc - c^2bcb^2 + bcb^2c^2 - cb^2c^2b \\
&+ bc^2b^2c - c^2b^2cb + bcbcbc - cbcbcb) \\
&+ (bc^4-c^4b + bcbc^3 - cbc^3b + bc^2bc^2 - c^2bc^2b + bc^3bc - c^3bcb) + (bc^5 - c^5b)
\end{split}
\]
hence the result.
\end{proof}

\begin{Rem}
(1) In the case $l = 2$, the generators $b,c$ here are replaced by $a,b$ in \cite{Curto-Morrison}.

(2) The rank $r$ of the locally free sheaf $M$ is equal to $l$.
Indeed if $M$ is defined by an exact sequence $0 \to O^{r-1} \to M \to L \to 0$ with minimal $r$
from an invertible sheaf $L$ with $(L,C) = 1$ such that $R^1f_*M^* = 0$ as in \cite{VdBergh}, then we have $l = r$ as follows.
We define fattened structure sheaves $\mathcal{O}_{C_i}$ inductively by
$0 \to \mathcal{O}_C(-1) \to \mathcal{O}_{C_i} \to \mathcal{O}_{C_{i-1}} \to 0$ with $C_1 = C$.
Since $(L,C) = 1$, we have $\dim \text{Hom}(L,\mathcal{O}_C) = \dim \text{Ext}^1(L,\mathcal{O}_C) = 0$, 
hence $\dim \text{Hom}(M,\mathcal{O}_C) = r-1$.
Since $\dim \text{Hom}(M,\mathcal{O}_C(-1)) = 0$ and $\dim \text{Ext}^1(M,\mathcal{O}_C(-1)) = 1$, we have
$\dim \text{Hom}(M,\mathcal{O}_{C_i}) = \dim \text{Hom}(M,\mathcal{O}_{i-1}) - 1$.
Since $\dim \text{Hom}(M,\mathcal{O}_{C_l}) = 0$, we have $l = r$.

(3) We would like to ask whether the global NC deformation algebra of $\mathcal{O}_C$ is given in the form 
$H' \langle b,c \rangle/J$ over a polynomial algebra $H'$ with variables $t$ and some of the $u_i$, 
so that our (formal) deformation algebra is its completion.

(4) The abelianizations $A_{\text{def}}^{ab}$ are (commutative) formal power series rings for all $l$, because
$H^1(N_{C/\tilde Y}) = 0$.
That is why the generators of the defining ideals of the NC deformation rings are commutators. 
\end{Rem}

%%%%%%%%%%%%%%%%%%%%%%%%%%%%%%%%%%%%%%%%%%
%%%%%%%%%%%%%%%%%%%%%%%%%%%%%%%%%%%%%%%%%%
%%%%%%%%%%%%%%%%%%%%%%%%%%%%%%%%%%%%%%%%%%

Graduate School of Mathematical Sciences, University of Tokyo,
Komaba, Meguro, Tokyo, 153-8914, Japan. 

Morningside Center of Mathematics, 
Chinese Academy of Sciences, 
Haidian District, Beijing, China 100190

Department of Mathematical Sciences, 
Korea Advanced Institute of Science and Technology, 
291 Daehak-ro, Yuseong-gu, Daejeon 34141, Korea.

National Center for Theoretical Sciences, 
Mathematics Division, 
National Taiwan University, Taipei, 106, Taiwan.

kawamata@ms.u-tokyo.ac.jp

\end{document}